\tikzset{
  commutative diagrams/.cd, 
  arrow style=tikz, 
  diagrams={>=stealth}
}
\definecolor{darkblue}{HTML}{0000A6}
\space\href{https://doi.org/#1}{#1}
\space\href{http://arxiv.org/\abx@arxivpath/#1}{#1}
\space\href{http://www.ams.org/mathscinet-getitem?mr=MR#1}{#1}
\space\href{http://zbmath.org/?q=an:#1}{#1}
\newcommand{\printreferences}{\raggedright\printbibliography[heading=bibintoc]}
\ifundef{\abstract}{}{\patchcmd{\abstract}%
    {\quotation}{\quotation\noindent\ignorespaces}{}{}}
\numberwithin{equation}{section}
\renewcommand{\qedsymbol}{$\blacksquare$}
\newcommand{\CorollaryQED}{\qedsymbol}
\newcommand{\ConjectureQED}{$\square$}
\newcommand{\SituationQED}{$\times$}
\newcommand{\DefinitionQED}{$\spadesuit$}
\newcommand{\NotationQED}{$\blacktriangleright$}
\newcommand{\ExampleQED}{$\bullet$}
\newcommand{\RemarkQED}{$\clubsuit$}
\declaretheorem[numberlike=equation,]{theorem}
\declaretheorem[numbered=no,name=Theorem]{theorem*}
\declaretheorem[numberlike=equation,name=Lemma]{lemma}
\declaretheorem[numberlike=equation,name=Proposition]{prop}
\declaretheorem[numberlike=equation,name=Corollary,qed=\CorollaryQED]{cor}
\declaretheorem[numberlike=equation,name=Hypothesis]{hypothesis}
\declaretheorem[numberlike=equation,name=Definition,style=definition,qed=\DefinitionQED]{definition}
\declaretheorem[numbered=no,name=Definition,style=definition,qed=\DefinitionQED]{definition*}
\declaretheorem[numberlike=equation,style=definition,qed=\ExampleQED]{example}
\declaretheorem[numberlike=equation,style=remark,qed=\RemarkQED]{remark}
\declaretheorem[numbered=no,style=remark,name=Remark,qed=\RemarkQED]{remark*}
\def\makeautorefname#1#2{\AtBeginDocument{\expandafter\def\csname#1autorefname\endcsname{#2}}}
\numberwithin{substep}{step}
\setlist[description]{leftmargin=!,labelindent=1em}
\setlist[enumerate]{label={\rm (\arabic*)},ref=\arabic*}
\setlist[enumerate,2]{label={\rm (\alph*)},ref=\theenumi.\alph*}
\setlist[enumerate,3]{label={\rm (\roman*)},ref=\theenumii.\roman*}
\let\U\undefined
\DeclareFontFamily{U}{mathx}{\hyphenchar\font45}
\DeclareFontShape{U}{mathx}{m}{n}{
      <5> <6> <7> <8> <9> <10>
      <10.95> <12> <14.4> <17.28> <20.74> <24.88>
      mathx10
      }{}
\DeclareSymbolFont{mathx}{U}{mathx}{m}{n}
\DeclareMathAccent{\widecheck}{0}{mathx}{"71}
\DeclareMathAccent{\wideparen}{0}{mathx}{"75}
\DeclareMathOperator{\Ad}{Ad}
\DeclareMathOperator{\End}{End}
\DeclareMathOperator{\GL}{GL}
\DeclareMathOperator{\HF}{\HF}
\DeclareMathOperator{\Lie}{Lie}
\DeclareMathOperator{\ad}{ad}
\DeclareMathOperator{\tr}{tr}
\DeclarePairedDelimiter{\norm}{\|}{\|}
\DeclarePairedDelimiterX{\inp}[2]{\langle}{\rangle}{#1, #2}
\DeclarePairedDelimiter{\abs}{\lvert}{\rvert}
\def\({\left(}
\def\){\right)}
\def\<{\left\langle}
\def\>{\right\rangle}
\newcommand{\PU}{{\P\U}}
\newcommand{\Ric}{\mathrm{Ric}}
\newcommand{\SO}{\mathrm{SO}}
\newcommand{\SU}{\mathrm{SU}}
\newcommand{\Spin}{\mathrm{Spin}}
\newcommand{\Sp}{\mathrm{Sp}}
\newcommand{\U}{\mathrm{U}}
\newcommand{\qandq}{\quad\text{and}\quad}
\renewcommand{\epsilon}{\varepsilon}
\newcommand{\su}{\mathfrak{su}}
\renewcommand{\Im}{\operatorname{Im}}
\renewcommand{\P}{\mathbf{P}}
\renewcommand{\setminus}{{\backslash}}
\renewcommand{\leq}{\leqslant}
\renewcommand{\geq}{\geqslant}
\newcommand{\w}{\wedge}
\newcommand{\tn}{\otimes}
\renewcommand*\env@matrix[1][*\c@MaxMatrixCols c]{%
  \hskip -\arraycolsep
  \let\@ifnextchar\new@ifnextchar
  \array{#1}}
\renewcommand\xleftrightarrow[2][]{%
  \ext@arrow 9999{\longleftrightarrowfill@}{#1}{#2}}
\newcommand\longleftrightarrowfill@{%
  \arrowfill@\leftarrow\relbar\rightarrow}
\newcommand{\bA}{{\mathbf{A}}}
\newcommand{\bS}{{\mathbf{S}}}
\newcommand{\sA}{\mathscr{A}}
\newcommand{\sE}{\mathscr{E}}
\newcommand{\sK}{\mathscr{K}}
\newcommand{\fg}{{\mathfrak g}}
\newcommand{\fs}{{\mathfrak s}}
\newcommand{\fu}{{\mathfrak u}}
\newcommand{\fR}{{\mathfrak R}}
\newcommand{\slD}{\slashed D}
\author{Gorapada Bera}
\title{Growth of spinors in the generalized Seiberg--Witten equations on $\mathbb R^4$ and $\mathbb R^3$}
\date{\vspace{-5ex}}
\begin{document}
\maketitle
\begin{abstract}The classical Seiberg--Witten equations in dimensions three and four admit a natural generalization within a unified framework known as the generalized Seiberg--Witten (GSW) equations, which encompasses many important equations in gauge theory. This article proves that the averaged $L^2$-norm of any spinor with non-constant pointwise norm in the GSW equations on $\mathbb R^4$ and $\mathbb R^3$, measured over large-radius spheres, grows faster than a power of the radius, under a suitable curvature decay assumption. Separately, it is shown that if the Yang--Mills--Higgs energy of any solution of these equations is finite, then the pointwise norm of the spinor in it must converge to a non-negative constant at infinity. These two behaviors cannot occur simultaneously unless the spinor has constant pointwise norm. This work may be seen as partial generalization of results obtained by \citet{Taubes2017a}, and \citet{Nagy2019} for the Kapustin--Witten equations. 
\end{abstract}
\section{Introduction}The classical Seiberg--Witten (SW) equations \cites{Seiberg1994} can be generalized to a framework that contains many important gauge theoretic equations \cites{Taubes1999b, Pidstrygach2004, Haydys2008, Nakajima2015}. This framework requires a quaternionic representation $\rho:H\to \Sp(S)$ of a compact Lie group $H$ and a $\Spin^H$-structure (an extension of a $\Spin$ or $\Spin^c$-structure) on a smooth oriented Riemannian $4$-manifold $X$. Then the generalized Seiberg--Witten (GSW) equations are formulated as follows: for a connection $A$ inducing a fixed auxiliary connection $B$ and a spinor $\Phi$,
\begin{equation}\label{eq GSW 4 d formal}
	\begin{split}
	\slD_A\Phi=0,\\
	 F^+_{\ad(A)}=\mu(\Phi),
	 \end{split}
\end{equation}
where $\slD_A$ is the Dirac operator, and $\mu:\bS\to \Lambda^+(T^*X)\tn \ad(\fs)$ is a distinguished hyperkähler moment map.  For further details, see \autoref{sec GSW 4d}. Here $\ad(A)$ refers to the induced connection of $A$, induced by the adjoint representation of a compact Lie subgroup $G\subseteq H$, known as the \textit{structure group}. This unifying framework includes the anti-self duality (ASD) equations \cite{Donaldson1990}, the classical Seiberg--Witten equations \cite{Seiberg1994}, the $\U(n)$-monopole equations \cite{Feehan1998}, the Seiberg--Witten equations with multiple spinors \cite{Bryan1996}, the Vafa--Witten equations \cite{Vafa1994}, the complex ASD equations \cites{Taubes2013} which is closely related to the Kapustin--Witten equations \cite{Kapustin2007}, and the $\rm{ADHM}_{r,k}$ Seiberg--Witten equations \cite{Walpuski2019}. These equations not only play a pivotal role in physics, but are also likely to play an important role to the definition of invariants in low-dimensional topology \cites{Donaldson1990,Morgan1996,Witten2012}, as well as in higher-dimensional manifolds with special holonomy \cite{Doan2017d,Haydys2017}.

Focusing on the Euclidean space $X=\mathbb R^4$,  it is natural to ask questions about the solution space of the equations \autoref{eq GSW 4 d formal}, particularly about their behavior at infinity.  In this context, a fundamental question emerges: Do there exist any non-trivial solutions $(A,\Phi)$ to the equations \autoref{eq GSW 4 d formal} with finite Yang--Mills--Higgs (YMH) energy $\sE_4(A,\Phi)$?  The YMH energy functional is given by
\[\sE_4(A,\Phi)=\int_X \frac 12 \abs{F_{\ad(A)}}^2+\abs{\nabla_A\Phi}^2 +\abs{\mu(\Phi)}^2+\inp{\fR^+ \Phi}{\Phi},\]
where $\fR^+$ is the auxiliary curvature operator (see \autoref{def auxiliary curvature 4d}). It is worth noting that in most examples of generalized Seiberg--Witten (GSW) equations $\fR^+$ vanishes. The questions have been addressed for Kapustin--Witten equations with structure group $G=\SU(2)$ by \citet{Taubes2017a}, and \citet{Nagy2019}. Motivated by their work, we prove in the following theorem that the averaged $L^2$-norm of any spinor in the equations \autoref{eq GSW 4 d formal} with non-constant pointwise norm over large-radius spheres grows faster than a power of the radius, under a suitable curvature decay assumption. We also prove that, if the Yang--Mills--Higgs energy of any solution of these equations is finite, then the pointwise norm of the spinor in it must converge to a non-negative constant at infinity.
\begin{theorem}\label{growththeorem4}
Suppose $X = \mathbb R^4$ is equipped with the standard Euclidean metric and orientation, and the auxiliary connection $B$ is chosen so that the auxiliary curvature operator $\fR^+ = \tilde{\gamma}(F_B^+) \in \End(\bS^+)$ (see \autoref{def auxiliary curvature 4d}) vanishes. Let $(A, \Phi)$ be a solution to the generalized Seiberg--Witten equations \autoref{eq GSW 4 d formal}, or more generally, to the Euler--Lagrange equations \autoref{eq:EL4} associated with the Yang--Mills--Higgs energy functional $\sE_4$. Denote by $r$ the radial distance function from the origin in $\mathbb R^4$. 
\begin{enumerate}
    \item 
    \label{growththeorem4_part1}
    If $(A, \Phi)$ solves the equations \autoref{eq GSW 4 d formal}, assume that the anti-self-dual curvature, 
    $$ F^-_{\ad(A)} = o(r^{-2}) \quad \text{as } r \to \infty;$$  whereas if it solves the equations \autoref{eq:EL4}, assume instead that the curvature, 
    $$F_{\ad(A)} = o(r^{-2})  \quad \text{as } r \to \infty.$$
    Then either
    \[
    \nabla_A \Phi = 0 \qandq \mu(\Phi) = 0 \quad \text{(i.e., $|\Phi|$ is constant),}
    \]
    or there exists a constant $\varepsilon > 0$ such that
    \[
    \liminf_{r \to \infty} \frac{1}{r^{3+\varepsilon}} \int_{\partial B_r} |\Phi|^2 > 0.
    \]
    
    \item 
    \label{growththeorem4_part2}
    If $\sE_4(A, \Phi) < \infty$, then there exists a constant $m \geq 0$ such that
    \[
    |\Phi| - m = o(1) \quad \text{as } r \to \infty.
    \]
\end{enumerate}
\end{theorem}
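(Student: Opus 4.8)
The common input for both parts is that $|\Phi|^2$ is subharmonic on the flat $\R^4$. For \autoref{eq GSW 4 d formal} this comes from the Weitzenb\"ock formula applied to $\slD_A\Phi = 0$: the Euclidean scalar curvature vanishes, and with $F^+_{\ad(A)} = \mu(\Phi)$ and $\fR^+ = 0$ one gets $\nabla_A^*\nabla_A\Phi = -\gamma(\mu(\Phi))\Phi$; for \autoref{eq:EL4} the Euler--Lagrange equation for $\Phi$ gives (using $\fR^+ = 0$ and that $\mu$ is homogeneous of degree two) $\nabla_A^*\nabla_A\Phi = -(d\mu_\Phi)^*\mu(\Phi)$. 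Pairing with $\Phi$ and using the moment-map identities $\inp{\gamma(\mu(\Phi))\Phi}{\Phi} = c\,|\mu(\Phi)|^2$ and $\inp{(d\mu_\Phi)^*\mu(\Phi)}{\Phi} = 2|\mu(\Phi)|^2$ (with $c>0$), one obtains in either case, with $\Delta = \sum_i\partial_i^2$,
\[
    \Delta|\Phi|^2 \;=\; 2|\nabla_A\Phi|^2 + a\,|\mu(\Phi)|^2 \;\geq\; 0 , \qquad a > 0 ,
\]
so $|\Phi|^2$ is subharmonic and the right-hand side vanishes identically exactly when $\nabla_A\Phi \equiv 0$ and $\mu(\Phi)\equiv 0$, i.e.\ when $|\Phi|$ is constant. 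We may therefore assume $|\Phi|$ non-constant, and (by unique continuation) that $H(r) := \int_{\partial B_r}|\Phi|^2 > 0$ for all $r$, since otherwise subharmonicity forces $|\Phi|\equiv 0$ on the enclosed ball.

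For part~\autoref{growththeorem4_part1}, set $D(r) := \int_{B_r}\bigl(|\nabla_A\Phi|^2 + \tfrac{a}{2}|\mu(\Phi)|^2\bigr)$ and define the Almgren frequency $N(r) := rD(r)/H(r) > 0$, so the divergence theorem and the displayed identity give
\[
    \frac{d}{dr}\log H(r) \;=\; \frac{3}{r} + \frac{2D(r)}{H(r)} \;=\; \frac{3 + 2N(r)}{r}.
\]
The crux is to prove $\liminf_{r\to\infty} N(r) =: \delta > 0$. One differentiates $N$, inserts the second-order equation for $\Phi$, and integrates by parts over $B_r$; this produces the standard Rellich--Pohozaev square terms, which carry the favourable sign thanks to the moment-map positivity, together with boundary and commutator terms built from $F_{\ad(A)}$ (the commutators coming from $[\nabla_{A,i},\nabla_{A,j}]\Phi = F_{ij}\cdot\Phi$). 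The decay hypothesis enters here to make these error terms of strictly lower order: in the GSW case one uses $F^-_{\ad(A)} = o(r^{-2})$ (the self-dual part $F^+_{\ad(A)} = \mu(\Phi)$ being controlled by $\Phi$ and entering with a good sign), and in the Euler--Lagrange case $F_{\ad(A)} = o(r^{-2})$. This turns the computation into an almost-monotonicity statement for $N$; in the error-free model $N$ would be nondecreasing and $\delta = \lim N = 0$ would force $N\equiv 0$, hence $D\equiv 0$, hence $|\Phi|$ constant, and the actual argument absorbs the errors to reach the same conclusion $\delta > 0$. Feeding $N(r)\geq\delta$ into the logarithmic-derivative formula and integrating over $[r_1,r]$ gives $H(r) \geq C\,r^{3+2\delta}$, i.e.\ $\liminf_{r\to\infty} r^{-(3+\varepsilon)}\int_{\partial B_r}|\Phi|^2 > 0$ with $\varepsilon = 2\delta$.

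For part~\autoref{growththeorem4_part2}, finiteness of $\sE_4$ with $\fR^+ = 0$ forces $\int_{\R^4}|\nabla_A\Phi|^2 < \infty$ and $\int_{\R^4}|\mu(\Phi)|^2 < \infty$, so $f := \Delta|\Phi|^2 = 2|\nabla_A\Phi|^2 + a|\mu(\Phi)|^2 \geq 0$ lies in $L^1(\R^4)$. By interior $\varepsilon$-regularity for the system on unit balls, small energy on $B_2(x)$ gives pointwise bounds on $B_1(x)$, so since $\int_{B_2(x)}(\text{energy density})\to 0$ as $|x|\to\infty$ one gets $\sup_{B_1(x)} f \to 0$; in particular $f$ is bounded. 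Green's representation on $B_R$, together with the monotone increase of the Green function of $-\Delta$ on $B_R$ to $c_4|x-y|^{-2}$ and of the Poisson average of $|\Phi|^2$ to a limit $m^2 \in [0,\infty]$, gives $|\Phi(x)|^2 = m^2 - c_4\int_{\R^4}|x-y|^{-2}f(y)\,dy$; since $f\in L^1$ the Newtonian potential on the right is finite a.e., whence $m < \infty$, and it tends to $0$ as $|x|\to\infty$ (split off $B_1(x)$, where one uses $\sup_{B_1(x)}f\to 0$ and $\int_{B_1}|z|^{-2}\,dz < \infty$, and bound the rest by $M^{-2}\|f\|_{L^1} + \|f\|_{L^1(B_M(x))}$, letting $|x|\to\infty$ first and $M\to\infty$ afterwards). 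Hence $|\Phi|^2 - m^2 = o(1)$, and dividing by $|\Phi| + m$ (which is bounded) gives $|\Phi| - m = o(1)$.

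The step I expect to be the main obstacle is the frequency lower bound $\delta > 0$ in part~\autoref{growththeorem4_part1}: one must control the curvature corrections produced by the commutators $[\nabla_{A,i},\nabla_{A,j}]\Phi = F_{ij}\cdot\Phi$ in the monotonicity computation, for which the rate $o(r^{-2})$ is critical --- it is exactly the threshold at which $|F_{\ad(A)}|\,|\Phi|$ is dominated by $|\nabla_A^2\Phi|$ at frequency of order one --- and package them so that $N$ cannot collapse to $0$ while $D$ stays positive; this is where the integral estimates of \citet{Taubes2017a} and \citet{Nagy2019} for the $\SU(2)$ Kapustin--Witten equations must be adapted to the general hyperk\"ahler-moment-map setting. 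A lesser difficulty is securing $\sup_{B_1(x)}f\to 0$ in part~\autoref{growththeorem4_part2}, which rests on the $\varepsilon$-regularity theory for finite-energy solutions of the equations.
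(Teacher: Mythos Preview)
Your strategy for both parts is correct and matches the paper's: a frequency function with almost-monotonicity for part~(1), and $\varepsilon$-regularity plus the Newtonian potential for part~(2). There is, however, a real gap in how you propose to obtain the almost-monotonicity in part~(1).

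You say the curvature errors in $N'(r)$ arise from the commutators $[\nabla_{A,i},\nabla_{A,j}]\Phi = \rho(F_{ij})\Phi$ after inserting \emph{the second-order equation for $\Phi$} and integrating by parts. Done this way, the Rellich--Pohozaev identity leaves you with a \emph{bulk} error of the form $\int_{B_r} r\,\inp{\rho(\iota_{\partial_r}F_{\ad(A)})\Phi}{\nabla_A\Phi}$, i.e.\ something like $\int_{B_r} r\,|F_{\ad(A)}|\,|\Phi|\,|\nabla_A\Phi|$. This involves $|\Phi|$, and $o(r^{-2})$ decay of the curvature alone does not control it without a priori growth bounds on $|\Phi|$---which is exactly what you are trying to establish. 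The paper avoids this by working with the full stress--energy tensor $T = T_1 + T_2 + T_3$ built from $\nabla_A\Phi$, $F_{\ad(A)}$, and $\mu(\Phi)$; the divergence $\nabla^*T$ vanishes only after using \emph{both} Euler--Lagrange equations, and in particular the Yang--Mills equation $d^*_{\ad(A)}F_{\ad(A)} = -2\rho^*((\nabla_A\Phi)\Phi^*)$ cancels precisely the commutator term you isolate. The payoff is that the curvature enters $N'(r)$ only through the clean \emph{boundary} term $\frac{1}{r^2 m(r)}\int_{\partial B_r}|F_{\ad(A)}|^2$ (and in the GSW case only $|F^-_{\ad(A)}|^2$, since $F^+_{\ad(A)} = \mu(\Phi)$ contributes with the good sign). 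Under $o(r^{-2})$ decay this is $o(r^{-3})/m(\rho)$, so the modified frequency $\tilde N_c(r) := N(r) - c/(m(\rho)r^2)$ is genuinely nondecreasing for large $r$, and either it is eventually positive (yielding the growth $m(r)\gtrsim r^{\varepsilon}$), or one forces $r^2 D(r)\to 0$ along a sequence, hence $D\equiv 0$ and $|\Phi|$ is constant.

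For part~(2) your argument is essentially the paper's, with a cosmetic difference: the paper first proves $|\Phi| = O(r^\alpha)$ for every $\alpha<1$ via Morrey's inequality (the $\varepsilon$-regularity is applied to the full energy density $e(A,\Phi)$, which satisfies $\Delta e \lesssim e + e^{3/2}$, yielding $e\in L^p$ for all $p$), and then uses the gradient estimate to conclude that the nonnegative harmonic function $|\Phi|^2 + \psi$ is constant. Your direct Green-representation argument, using that the sphere averages of the subharmonic $|\Phi|^2$ increase to a finite limit, reaches the same conclusion and is arguably more direct.
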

By combining the contrasting behaviors established in \autoref{growththeorem4_part1} and \autoref{growththeorem4_part2} of \autoref{growththeorem4}, we obtain the following corollary, which asserts that the spinor must be parallel and the moment map vanishes. 
\begin{cor}\label{cor_vanishing4d}
Let $(A, \Phi)$ be as in \autoref{growththeorem4}, satisfying both assumptions in \autoref{growththeorem4_part1} and \autoref{growththeorem4_part2}. Then
\[
\nabla_A \Phi = 0 \qandq \mu(\Phi) = 0 \quad \text{(i.e., $|\Phi|$ is constant).} \qedhere
\]
\end{cor}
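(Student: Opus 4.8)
The plan is to prove \autoref{cor_vanishing4d} by playing the two conclusions of \autoref{growththeorem4} against one another. Argue by contradiction and suppose that $|\Phi|$ is not constant. Since $(A,\Phi)$ satisfies the curvature-decay hypothesis of \autoref{growththeorem4_part1} — that is, $F^-_{\ad(A)} = o(r^{-2})$ if $(A,\Phi)$ solves \autoref{eq GSW 4 d formal}, or $F_{\ad(A)} = o(r^{-2})$ if it solves \autoref{eq:EL4} — that part of the theorem supplies a constant $\varepsilon > 0$ for which
\[
\liminf_{r\to\infty}\frac{1}{r^{3+\varepsilon}}\int_{\partial B_r}|\Phi|^2 > 0.
\]

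On the other hand, the hypothesis $\sE_4(A,\Phi) < \infty$ together with \autoref{growththeorem4_part2} gives a constant $m \geq 0$ with $|\Phi| - m = o(1)$ as $r \to \infty$. In particular $|\Phi|$ is bounded, say $|\Phi| \leq C$ for $r$ large, and since the sphere $\partial B_r \subset \mathbb R^4$ has $3$-dimensional volume of order $r^3$ this forces $\int_{\partial B_r} |\Phi|^2 = O(r^3)$, hence
\[
\frac{1}{r^{3+\varepsilon}}\int_{\partial B_r}|\Phi|^2 = O(r^{-\varepsilon}) \longrightarrow 0 \qquad (r \to \infty),
\]
contradicting the previous display. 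Therefore $|\Phi|$ must be constant, and by the dichotomy in \autoref{growththeorem4_part1} this is precisely the assertion $\nabla_A\Phi = 0$ and $\mu(\Phi) = 0$.

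There is no genuine obstacle here: the entire substance lies in \autoref{growththeorem4}, and the corollary is the essentially formal observation that a spinor norm converging to a finite limit at infinity cannot have spherical $L^2$-average growing faster than the natural Euclidean rate $r^3$. The only step requiring a moment of care is a bookkeeping one — invoking from \autoref{growththeorem4_part1} the decay assumption that matches whichever system, \autoref{eq GSW 4 d formal} or \autoref{eq:EL4}, the pair $(A,\Phi)$ actually solves — but this is already encoded in the phrase ``satisfying both assumptions'' in the statement, so nothing further is needed.
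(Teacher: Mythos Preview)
Your proof is correct and matches the paper's approach exactly: the paper does not even spell out a proof for this corollary, treating it as an immediate consequence of ``combining the contrasting behaviors established in \autoref{growththeorem4_part1} and \autoref{growththeorem4_part2}'', which is precisely the contradiction argument you wrote down.
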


\begin{remark} 
\autoref{cor_vanishing4d} can be proved with the finite YMH energy assumption in \autoref{growththeorem4_part2} of \autoref{growththeorem4} alone, by adapting the arguments presented in \cite[Proposition 2.1]{Jaffe1980} with the divergence free symmetric $(0,2)$ tensor $T$ defined in \autoref{def T4d}; see also \autoref{lem div T is 0}.
\end{remark}

\begin{remark}
The idea behind the proof of \autoref{growththeorem4}~\autoref{growththeorem4_part1} traces back to establishing the monotonicity of a suitable (modified) frequency function--an approach originally employed by \citet{Taubes2017a} in the setting of the Kapustin--Witten equations. Notably, Taubes' argument avoids assuming curvature decay by making clever use of a special property of the Lie algebra $\su(2)$ \cite[Equation 4.12]{Taubes2017a}. While this property does not hold for a general structure group $G$, assuming curvature decay offers an alternative route to reach the same conclusion in our setting.  With this assumption in place, the method not only generalizes naturally to any generalized Seiberg--Witten equations with arbitrary structure group, but also takes a streamlined approach that avoids the technically involved step in Taubes' proof of decomposing the spinor along every direction in $\mathbb R^4$ and analyzing a separate frequency function for each--which highlights a key novelty of this article. The proof of \autoref{growththeorem4}~\autoref{growththeorem4_part2} leverages Heinz trick ($\varepsilon$-regularity) applied to the Yang--Mills--Higgs energy density. This part of the proof is inspired by arguments of a similar nature found in  \cites{Nagy2019,Fadel2022}.
In this way, the present work also partially generalizes the results of \cite{Nagy2019} to any generalized Seiberg--Witten equations, another important aspect of this article.
\end{remark}
\begin{remark}
We expect that the results presented here can be extended to the setting where $X$ is an ALE or ALF gravitational instanton, since these spaces are asymptotic to $\mathbb R^4$ and $\mathbb R^3\times S^1$, possibly modulo a finite group action. As our focus lies on the behavior at infinity, the definitions of the averaged $L^2$-norm of the spinor over large-radius spheres and the associated frequency function still make sense in this context--by integrating over large balls whose boundaries are cross-sections of the ends.  We believe that the (almost) monotonicity and related properties should continue to hold, thereby allowing for a conclusion analogous to that of the present work. This would, in particular, provide partial generalizations of the results of \cites{Bleher23,Nagy2019} on the Kapustin--Witten equations with structure group $\SU(2)$.  
\end{remark}

We now shift our focus to three dimensions, where we anticipate obtaining similar results. The dimensional reduction of the four dimensional generalized Seiberg--Witten equations  \autoref{eq GSW 4 d formal} on $X=\mathbb R\times M$ reduces to the three dimensional generalized Seiberg--Witten Bogomolny equations on $M$. That is, for a connection $A$ inducing a fixed auxiliary connection $B$, a Higgs field $\xi$ and a spinor $\Phi$,
\begin{equation}\label{eq GSWb 3d formal}
\begin{split}
	\slD_A\Phi=-\rho(\xi)\Phi,\\
	 F_{\ad(A)}=*d_{\ad(A)}\xi+\mu(\Phi).
	 \end{split}
\end{equation}
 The Bogomolny monopole equations \cite{Hitchin1982}, extended Bogomolny monopole equations \cite{Witten2015}, Kapustin--Witten monopole equations \cite{Nagy2019}, Haydys monopole equations \cite{Nagy2020} are examples of the equations \autoref{eq GSWb 3d formal}. We again consider the Yang--Mills Higgs energy functional in  dimension three, 
\[\sE_3(A,\xi, \Phi)=\int_M \abs{F_{\ad(A)}}^2+\abs{\nabla_A\Phi}^2+\abs{\nabla_{\ad(A)}\xi}^2 +\abs{\rho(\xi)\Phi}^2+\abs{\mu(\Phi)}^2+\inp{\fR \Phi}{\Phi},\]
where $\fR$ is the auxiliary curvature operator (see \autoref{def auxiliary curvature 3d}).
Setting the Higgs field $\xi=0$ in the equations \autoref{eq GSWb 3d formal} yields the generalized Seiberg--Witten equations in dimension three:
\begin{equation}\label{eq GSW 3d formal}
\begin{split}
	\slD_A\Phi=0,\\
	 F_{\ad(A)}=\mu(\Phi).
	 \end{split}
\end{equation}

We will again focus on the Euclidean space 
$M=\mathbb R^3$ and prove the following theorem regarding solutions of the generalized Seiberg--Witten Bogomolny equations \autoref{eq GSWb 3d formal}, similar to \autoref{growththeorem4}. The only difference is that the curvature decay assumption now requires an additional condition on the decay of the covariant derivative of the Higgs field. However, if we know that the Higgs field is zero, i.e., the solution satisfies the generalized Seiberg--Witten equations \autoref{eq GSW 3d formal}, both of these assumptions are no longer necessary.

\begin{theorem}\label{growththeorem3}
Suppose $M = \mathbb R^3$ is equipped with the standard Euclidean metric and orientation, and the auxiliary connection $B$ is chosen such that the auxiliary curvature operator $\fR = \tilde{\gamma}(F_B) \in \End(\bS)$ (see \autoref{def auxiliary curvature 3d}) vanishes. Let $(A, \xi, \Phi)$ be a solution to the generalized Seiberg--Witten Bogomolny equations \autoref{eq GSWb 3d formal}, or more generally, to the Euler--Lagrange equations \autoref{EL eq} associated with the Yang--Mills--Higgs energy functional $\sE_3$.  Denote by $r$ the radial distance function from the origin in $\mathbb R^3$. 
\begin{enumerate}
    \item 
    \label{growththeorem3_part1}
Assume $$\nabla_{\ad(A)} \xi = o(r^{-3/2}) \qandq F_{\ad(A)} = o(r^{-3/2})  \quad \text{as } r \to \infty.$$ However, if $\xi=0$ and $(A, \Phi)$ solves  \autoref{eq GSW 3d formal},  these decay assumptions are not required.
Then either
    \[
    \nabla_A \Phi = 0, \quad \mu(\Phi) = 0 \qandq \rho(\xi)\Phi = 0 \quad \text{(i.e., $|\Phi|$ is constant),}
    \]
    or there exists a constant $\varepsilon > 0$ such that
    \[
    \liminf_{r \to \infty} \frac{1}{r^{2+\varepsilon}} \int_{\partial B_r} |\Phi|^2 > 0.
    \]

    \item 
    \label{growththeorem3_part2}
    If $\sE_3(A, \xi, \Phi) < \infty$, then there exist constants $m_1, m_2 \geq 0$ such that
    \[
    \abs{\xi} - m_1 = o(1) \qandq \abs{\Phi}- m_2 = o(1) \quad \text{as } r \to \infty.
    \]
\end{enumerate}
\end{theorem}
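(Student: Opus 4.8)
The plan is to prove part~\autoref{growththeorem3_part1} by establishing an almost-monotonicity formula for a modified frequency function, in the spirit of Taubes' treatment \cite{Taubes2017a} of the Kapustin--Witten equations, and to prove part~\autoref{growththeorem3_part2} by applying the Heinz trick ($\varepsilon$-regularity) to the Yang--Mills--Higgs energy density, following \cites{Nagy2019,Fadel2022}. Throughout, the argument runs parallel to that of \autoref{growththeorem4}, with two new features: the Higgs field $\xi$, and the ambient dimension $n=3$. (The $\mathbb R$-invariant lift to $\mathbb R\times\mathbb R^3$ of a solution on $M=\mathbb R^3$ solves \autoref{eq GSW 4 d formal}, which is why the two settings are parallel; but the analysis must be redone directly, since that lift has infinite energy along the $\mathbb R$-factor and the relevant spheres are those of $\mathbb R^3$.) The hypothesis $\fR=\tilde\gamma(F_B)=0$ will be used to delete the auxiliary-curvature term from the Weitzenböck formula for $\slD_A^2$.

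For part~\autoref{growththeorem3_part1}, I would first record, from the Weitzenböck formula (vanishing scalar curvature, $\fR=0$) together with $\slD_A\Phi=-\rho(\xi)\Phi$ and $F_{\ad(A)}=*d_{\ad(A)}\xi+\mu(\Phi)$ --- or, in the Euler--Lagrange case, from \autoref{EL eq} directly --- the pointwise identity $\tfrac12\Delta|\Phi|^2=|\nabla_A\Phi|^2+|\rho(\xi)\Phi|^2+\kappa|\mu(\Phi)|^2$ for some constant $\kappa>0$, where the moment-map identity $\langle\tilde\gamma(\mu(\Phi))\Phi,\Phi\rangle=\kappa|\mu(\Phi)|^2$ enters and the cross-terms $\langle\tilde\gamma(*d_{\ad(A)}\xi)\Phi,\Phi\rangle$ cancel those generated by $\slD_A^2\Phi=-\slD_A(\rho(\xi)\Phi)$ through the three-dimensional Clifford relation $\tilde\gamma(*\eta)=-\tilde\gamma(\eta)$ on spinors. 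Setting $H(r):=\int_{\partial B_r}|\Phi|^2$ and integrating, $\int_{\partial B_r}\langle\Phi,\partial_r\Phi\rangle=D(r):=\int_{B_r}(|\nabla_A\Phi|^2+|\rho(\xi)\Phi|^2+\kappa|\mu(\Phi)|^2)$, so that together with $\tfrac12 H'(r)=\tfrac{n-1}{2r}H(r)+\int_{\partial B_r}\langle\Phi,\partial_r\Phi\rangle$ one gets $\tfrac{d}{dr}\log(H(r)/r^{n-1})=2N(r)/r$ exactly, where $N(r):=rD(r)/H(r)$ is the frequency. The crux is then a second, Pohozaev-type integration --- testing the equations against the radial field $x^i\nabla_{A,i}\Phi$ --- yielding an ODE for $D(r)$; because covariant derivatives fail to commute this ODE carries an error term controlled by $F_{\ad(A)}$ and $\nabla_{\ad(A)}\xi$, and on differentiating $\log N$ the leading contribution is the non-negative Cauchy--Schwarz term of the classical computation while the error is absorbed --- using $|F_{\ad(A)}|=o(r^{-3/2})$, $|\nabla_{\ad(A)}\xi|=o(r^{-3/2})$, and Hardy-type inequalities on $\mathbb R^3$ bounding $\int_{B_r}|x|^{-2}|\Phi|^2$ by $D(r)$ --- into a quantity integrable over $(1,\infty)$. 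This yields $\tfrac{d}{dr}(\log N(r)+\Lambda(r))\geq 0$ for some bounded $\Lambda$ with $\Lambda(r)\to 0$. The dichotomy follows: if $D\equiv 0$ then $\nabla_A\Phi=0$, $\mu(\Phi)=0$, $\rho(\xi)\Phi=0$, so $d|\Phi|^2=2\langle\nabla_A\Phi,\Phi\rangle=0$; otherwise $N(r_0)>0$ for some $r_0$, hence $N\geq N(r_0)e^{-\sup\Lambda}=:N_*>0$ on $[r_0,\infty)$, and integrating $\tfrac{d}{dr}\log(H(r)/r^{n-1})=2N(r)/r\geq 2N_*/r$ gives $H(r)\geq c_0\,r^{n-1+2N_*}$, i.e.\ $\liminf_{r\to\infty}r^{-(2+\varepsilon)}\int_{\partial B_r}|\Phi|^2\geq c_0>0$ with $n=3$ and $\varepsilon:=2N_*$. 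Finally, in the case $\xi=0$ of equations \autoref{eq GSW 3d formal} one has $F_{\ad(A)}=\mu(\Phi)$, so the curvature part of the Pohozaev error is quadratic in $\mu(\Phi)$ and can be absorbed --- via the moment-map identity and the subharmonicity of $|\Phi|^2$ --- by terms already in $D(r)$ and the Cauchy--Schwarz term, so that no decay hypothesis is needed, exactly as only $F^-_{\ad(A)}$ had to decay in \autoref{growththeorem4}.

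For part~\autoref{growththeorem3_part2}, assume $\sE_3(A,\xi,\Phi)<\infty$. The standard $\varepsilon$-regularity estimate for this class of equations --- if the energy over a ball lies below a universal threshold, the energy density $e:=\tfrac12|F_{\ad(A)}|^2+|\nabla_A\Phi|^2+|\nabla_{\ad(A)}\xi|^2+|\rho(\xi)\Phi|^2+|\mu(\Phi)|^2$ is bounded on the half-ball by its integral there --- combined with $\int_{\{|y|\geq R\}}e\to 0$ as $R\to\infty$ (the tail of $\int_{\mathbb R^3}e=\sE_3<\infty$) shows that $e\in L^\infty(\mathbb R^3)$ and $e(x)\to 0$ as $|x|\to\infty$. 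Since solutions of \autoref{eq GSWb 3d formal} also satisfy the Euler--Lagrange equations \autoref{EL eq}, in all cases the Weitzenböck identities give $\Delta|\Phi|^2=2(|\nabla_A\Phi|^2+|\rho(\xi)\Phi|^2+\kappa|\mu(\Phi)|^2)$ and $\Delta|\xi|^2=2(|\nabla_{\ad(A)}\xi|^2+|\rho(\xi)\Phi|^2)$, both non-negative, both $\leq Ce$, hence both in $L^1(\mathbb R^3)$ and tending to $0$ at infinity. Therefore $|\Phi|^2$ is a non-negative subharmonic function on $\mathbb R^3$ with finite Riesz mass, and the Riesz representation gives $|\Phi|^2=h+G*(\Delta|\Phi|^2)$ with $h$ harmonic and $G$ the negative Newtonian kernel; then $h=|\Phi|^2-G*(\Delta|\Phi|^2)\geq|\Phi|^2\geq 0$ is a non-negative harmonic function on $\mathbb R^3$, hence a constant $m_2^2$, while $G*(\Delta|\Phi|^2)(x)\to 0$ as $|x|\to\infty$ because $\Delta|\Phi|^2\in L^1(\mathbb R^3)$ and $\Delta|\Phi|^2(x)\to 0$. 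Hence $|\Phi|-m_2=o(1)$, and the identical argument applied to $|\xi|^2$ gives $|\xi|-m_1=o(1)$ for some $m_1\geq 0$ (with $m_1=0$ when $\xi=0$).

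The step I expect to be the main obstacle is the almost-monotonicity of $N$ in part~\autoref{growththeorem3_part1}: isolating the combination $D(r)$ for which the boundary and Weitzenböck curvature terms assemble into a Cauchy--Schwarz term plus an error of the right order, verifying the cancellation of the $d_{\ad(A)}\xi$ cross-terms so that only $F_{\ad(A)}$ and $\nabla_{\ad(A)}\xi$ survive in that error, and checking that the decay rate $o(r^{-3/2})$ --- the three-dimensional counterpart of the $o(r^{-2})$ in \autoref{growththeorem4} --- is exactly what the relevant Hardy-type inequalities need for the monotonicity defect to be integrable near infinity. By comparison, the two analytic inputs of part~\autoref{growththeorem3_part2} --- the $\varepsilon$-regularity estimate for the energy density, and the Riesz representation of the subharmonic functions $|\Phi|^2$, $|\xi|^2$ --- are by now routine, the former following the template of \cites{Nagy2019,Fadel2022}.
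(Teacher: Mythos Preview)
Your overall strategy matches the paper's: frequency function with (almost-)monotonicity for part~\autoref{growththeorem3_part1}, Heinz trick plus Riesz/Green representation and Liouville for part~\autoref{growththeorem3_part2}. Part~\autoref{growththeorem3_part2} is essentially identical to the paper's argument (you even streamline it slightly by invoking Liouville for non-negative harmonic functions directly, whereas the paper routes through an intermediate $|\Phi|=O(r^\alpha)$ bound).

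The substantive difference is in how you control the error in $N'(r)$ for part~\autoref{growththeorem3_part1}. You propose to test only the spinor equation against $x^i\nabla_{A,i}\Phi$ and then absorb commutator terms of the form $\langle\rho(F)\Phi,\,x\!\cdot\!\nabla_A\Phi\rangle$ via Hardy-type inequalities. The paper instead builds the full divergence-free stress--energy tensor $T$ out of \emph{all} the fields $(\Phi,F_{\ad(A)},\xi)$ (\autoref{def T3d}, \autoref{lem3d div T is 0}); contracting $\nabla^*T=0$ with $dr^2$ makes the $\Phi$--curvature cross-terms cancel exactly, so the residual error in $N'$ is purely $\int_{\partial B_r}(|F_{\ad(A)}|^2+|\nabla_{\ad(A)}\xi|^2)$ and $\int_{B_r}|F_{\ad(A)}|^2$, divided by $m(r)$ (\autoref{prop N prime 3d}). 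No $\Phi$ appears in the error at all, so no Hardy inequality is needed: one simply uses the monotonicity $m(r)\geq m(\rho)$ together with the decay hypothesis to get $N'(r)+\tfrac{c}{r^2m(\rho)}\geq 0$, and the additive modification $\widetilde N_c(r)=N(r)-\tfrac{c}{m(\rho)r}$ is monotone. In the special case $\xi=0$, $F_{\ad(A)}=\mu(\Phi)$, these error terms are exactly cancelled by the $|\mu(\Phi)|^2$ terms already present, and one gets $N'(r)\geq 0$ on the nose---this is the precise mechanism behind your ``absorbed by terms already in $D(r)$'' remark.

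Your route is not wrong in spirit, but the Hardy step is both unnecessary and somewhat delicate (the error you would get by testing only the $\Phi$-equation couples $F$ with $\Phi$, and the $o(r^{-3/2})$ rate does not mesh cleanly with the $|x|^{-2}$ weight of the sharp Hardy inequality on $\mathbb R^3$). The stress--energy tensor packaging is what makes the $o(r^{-3/2})$ threshold exactly right. Also note that the paper's additive correction $\widetilde N_c$ avoids the a priori positivity of $N$ that your multiplicative $\log N+\Lambda$ formulation needs.
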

Combining the contrasting behaviors from \autoref{growththeorem3_part1} and \autoref{growththeorem3_part2} of \autoref{growththeorem3}, we deduce the following corollary:  the spinor is parallel, and both the moment map and the action of the Higgs field vanish whenever both conditions are satisfied.
\begin{cor}
Let $(A, \xi, \Phi)$ be as in \autoref{growththeorem3}, satisfying both the assumptions in \autoref{growththeorem3_part1} and \autoref{growththeorem3_part2}. Then
\[
\nabla_A \Phi = 0, \quad \mu(\Phi) = 0 \qandq \rho(\xi)\Phi = 0 \quad \text{(i.e., $|\Phi|$ is constant).} \qedhere
\]
\end{cor}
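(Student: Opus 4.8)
The corollary is a direct consequence of the dichotomy in \autoref{growththeorem3}~\autoref{growththeorem3_part1} together with the decay statement of \autoref{growththeorem3_part2}, and the plan is simply to play these two against each other. I would argue by contradiction: suppose that $|\Phi|$ is not constant, that is, that it is not simultaneously true that $\nabla_A\Phi = 0$, $\mu(\Phi) = 0$ and $\rho(\xi)\Phi = 0$. Since the decay hypotheses of \autoref{growththeorem3_part1} are in force (and if instead $\xi = 0$ and $(A,\Phi)$ solves \autoref{eq GSW 3d formal}, no decay hypothesis is needed at all), the first alternative in that part is excluded, so there exists $\varepsilon > 0$ with
\[
\liminf_{r\to\infty} \frac{1}{r^{2+\varepsilon}} \int_{\partial B_r} |\Phi|^2 > 0.
\]

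On the other hand, the hypothesis $\sE_3(A, \xi, \Phi) < \infty$ allows one to invoke \autoref{growththeorem3_part2}, producing a constant $m_2 \geq 0$ with $|\Phi| - m_2 = o(1)$ as $r\to\infty$. In particular there exist $R > 0$ and $C > 0$ with $|\Phi| \leq C$ on $\mathbb R^3 \setminus B_R$, so that for every $r > R$
\[
\frac{1}{r^{2+\varepsilon}} \int_{\partial B_r} |\Phi|^2 \;\leq\; \frac{C^2\,\vol(\partial B_r)}{r^{2+\varepsilon}} \;=\; \frac{4\pi C^2}{r^{\varepsilon}},
\]
since $\partial B_r\subset\mathbb R^3$ is a Euclidean $2$-sphere of area $4\pi r^2$. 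Letting $r\to\infty$, the right-hand side tends to $0$, which contradicts the previous display. Therefore $|\Phi|$ is constant, and by the first alternative of \autoref{growththeorem3_part1} this means exactly $\nabla_A\Phi = 0$, $\mu(\Phi) = 0$ and $\rho(\xi)\Phi = 0$, as claimed.

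I do not expect any genuine obstacle here: once \autoref{growththeorem3} is in hand the argument is purely formal, the only quantitative ingredient being the elementary scaling $\vol(\partial B_r)\sim r^2$ in $\mathbb R^3$, which is precisely calibrated against the exponent $2+\varepsilon$ appearing in \autoref{growththeorem3_part1}. The identical bookkeeping proves \autoref{cor_vanishing4d}, with $\vol(\partial B_r)\sim r^3$ in $\mathbb R^4$ matched against the exponent $3+\varepsilon$ occurring there.
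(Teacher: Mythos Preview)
Your argument is correct and is precisely the intended one: the paper does not write out a separate proof of this corollary but simply presents it as an immediate consequence of the two contrasting behaviors in \autoref{growththeorem3}, and your contradiction argument is exactly how one cashes this out. The only thing to note is that the paper treats the implication as self-evident, so your proof is more detailed than what appears there, but the content is identical.
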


\begin{remark} If $(A, \xi, \Phi)$ from \autoref{growththeorem3}  satisfies only the finite YMH energy assumption in \autoref{growththeorem3_part2} of \autoref{growththeorem3} then by following the arguments presented in \cite[Proposition 2.1]{Jaffe1980} with the divergence free symmetric $(0,2)$ tensor $T$ defined in \autoref{def T3d} (see also \autoref{lem3d div T is 0}), we would obtain the following equipartition identity, analogous to \cite[Corollary 2.2]{Jaffe1980}:
\[ \int_{\mathbb R^3} \abs{F_{\ad(A)}}^2=\int_{\mathbb R^3}\abs{\nabla_A\Phi}^2+\abs{\nabla_{\ad(A)}\xi}^2 +3\abs{\rho(\xi)\Phi}^2+3\abs{\mu(\Phi)}^2. \qedhere\]
\end{remark}

\paragraph{Acknowledgements.} I am deeply grateful to my PhD advisor, Thomas Walpuski, whose research on generalized Seiberg--Witten equations has greatly influenced this article. I also thank \'{A}kos Nagy and Gon\c{c}alo Oliveira for their work \cite{Nagy2019}, which has had a significant impact on the development of this paper. Additionally, I would like to sincerely thank the anonymous referee for several feedbacks and identifying an error in a lemma in one of the previous versions, whose corrected form led to the revisions in the main results of the current version.


\section{Generalized Seiberg--Witten equations in dimension four}\label{sec GSW 4d}
The primary objective of this section is to establish \autoref{growththeorem4}. To that end, we begin by laying the necessary groundwork on the generalized Seiberg--Witten equations in dimension four. This includes introducing the fundamental setup, clarifying the relevant notations, and deriving several key identities that will play a crucial role in the arguments to follow.
\subsection{Preliminaries: basic set up and identities}
The set up of generalized Seiberg--Witten equations in dimension four requires an algebraic and a geometric data which are  generalizations of datas we need to set up the classical Seiberg--Witten equations. Here we are closely following \cite{Walpuski2019,Walpuski2022}.  

\begin{definition}
  A \textbf{quaternionic hermitian vector space} is a left $\mathbb H$-module $S$ together with an inner product $\inp\cdot \cdot$ such that $i,j,k$ act by isometries. The \textbf{unitary symplectic group }$\Sp(S)$ is the subgroup of $\GL_{\mathbb H}(S)$ preserving $\inp\cdot \cdot$. 
  \end{definition}
 \begin{definition}An \textbf{algebraic data} is a triple $(H,\rho,G)$ where $H$ is a compact Lie group with $-1 \in Z(H)$ and $G$ is a closed, connected, normal subgroup of $H$, and $\rho:H\to \Sp(S)$ is a quaternionic representation of $H$. Here $S$ is a quaternionic hermitian vector space. The subgroup $G$ and the quotient group $K:=H/{\langle G,-1\rangle }$ are said to be the \textbf{structure group} and the \textbf{auxiliary group}, respectively.
 	\end{definition}
 
 Choose an algebraic data $(H,\rho,G)$. Denote the induced Lie algebra representation of $\rho{|_G}$ again by $\rho:\mathfrak g\to \End(S)$, where $\mathfrak g=\Lie(G)$. 
 Define  $\gamma:\mathbb H\to \End(S)$ and  $\tilde \gamma:$ $\Im \mathbb H \otimes \mathfrak g \to \End(S)$  by $$\gamma(v)\Phi:=v\cdot \Phi,\ \ \ \ \text{and}\ \ \ \tilde \gamma(v \otimes \xi):=\gamma(v)\circ \rho(\xi).$$ Then $\tilde \gamma^*:\End(S)\cong \End(S)^* \to $ ($\Im \mathbb H \otimes \mathfrak g)^*\cong (\Im \ \mathbb H)^*\otimes\mathfrak g$. Corresponding to the quaternionic representation $\rho{|_G}$ there is a distinguished \textbf{hyperk\"ahler moment map} $\mu:S\to(\Im \mathbb H)^* \otimes \mathfrak g$ defined by
	 $$\mu(\Phi):=\frac 12\tilde \gamma^*(\Phi \Phi^*),$$ 
	 that is, $\mu$ is $G$-equivariant and $\inp{(d\mu)_\Phi \phi}{v \otimes \xi}=\inp{\gamma(v)\rho(\xi)\Phi}{\phi}$ for all $v\in \Im \mathbb H$, $\xi\in \fg$ and $\Phi,\phi\in S$. Later we will identify $\Im \mathbb H$ with $\Lambda^+\mathbb H^*$ by the following isomorphism $v\mapsto \inp{dq\wedge d\bar q}{v}$, $q\in \mathbb H$.

Set $$\Spin^H(4):= \frac{\Sp(1)\times \Sp(1)\times H}{\{\pm 1\}}.$$ The group $\Sp(1)\times \Sp(1)$ acts on $\mathbb R^4\cong\mathbb H$ by $(p_+,p_-)\cdot x=p_-x \bar{p_+}$ and yields a $2$-fold covering $\Sp(1)\times \Sp(1)\to \SO(4)$ and therefore $\Spin(4)=\Sp(1)\times \Sp(1)$. 
 Define $\sigma_\pm:\Spin^H(4)\to \Sp(S)$ by
	$$ \sigma_\pm[p_+,p_-,z]=\gamma(p_\pm)\circ \rho(z).$$

\begin{definition}
	A \textbf{$\Spin^H$-structure} on an oriented Riemannian $4$-manifold $(X,g)$ is a principal $\Spin^H(4)$-bundle $\mathfrak s$ together with an isomorphism  \[\mathfrak s \times_{\Spin^H(4)} \SO(4)\cong \SO(TX).\qedhere\]
\end{definition}

Choose an algebraic data $(H,\rho,G)$. A $\Spin^H$-structure $\mathfrak s$ induces the following associated bundles and maps,
\begin{itemize}
	\item the \textbf{positive and negative spinor bundles}, $$\bS^\pm=\mathfrak s \times_{\sigma_\pm}S,$$
 \item the \textbf{adjoint bundle} and the \textbf{auxiliary bundle}, respectively, 
 \[\ad(\mathfrak s):=\mathfrak s \times_{\Spin^H(4)} \mathfrak g \qandq \sK:=\mathfrak s \times_{\Spin^H(4)} K,\]
	\item the \textbf{Clifford multiplication map} $\gamma: TX \to \End(\mathbf S^{+},\mathbf S^{-})$ induced by $\gamma$,
 \item $\tilde \gamma:TX\tn \ad(\fs)\to \End(S^+,S^-)$, induced by $\tilde \gamma$,
     \item the \textbf{moment map} $\mu:\mathbf S^+ \to \Lambda^+T^*X\otimes \ad(\fs),$ defined by
	 $$\mu(\Phi):=\frac 12\tilde \gamma^*(\Phi \Phi^*).$$ 
\end{itemize}

\begin{definition}
	A \textbf{geometric data} is a tuple $(X,g,\fs,B)$ where
	$\mathfrak s$ is a $\Spin^H$-structure  on an oriented Riemannian $4$-manifold $(X,g)$ and $B$ is a connection on the auxiliary bundle $\sK$.
\end{definition}

Choose a geometric data $(X,g,\fs,B)$. Denote by $\sA(\fs,B)$ the space of all connections on $\fs$ inducing the Levi-Civita connection on $TX$ and the connection $B$ on the auxiliary bundle $\sK$. For $A\in \sA(\fs,B)$ we denote the induced connection on $\ad(\fs)$ by $\ad(A)$. Note that $\sA(\fs,B)$ is nonempty and is an affine space over $\Omega^1(X,\ad(\fs))$. Every $A\in \sA(\fs,B)$ defines a \textbf{Dirac operator} 
$\slD_A: \Gamma(\mathbf S^+)\to \Gamma(\mathbf S^-)$ which is given by 
$$ \slD_A\Phi=\sum_{i=1}^{4} \gamma(e_i)\nabla_{A,{e_i}}\Phi,$$
	where $ \{ e_1,e_2,e_3,e_4\}$ is an oriented local orthonormal frame of $TX$.

\begin{definition}
	The \textbf{generalized Seiberg--Witten (GSW) equations in dimension four} associated with the datas $(H,\rho,G)$ and $(X,g,\fs,B)$ are the following equations for $A\in\sA(\fs,B) $, $\Phi \in  \Gamma(\mathbf S^+)$:
	\begin{equation} \label{sw eq}
	\slD_A\Phi=0,\ \ \ 
	F^+_{\ad(A)}=\mu(\Phi).  
	\end{equation}
 Solutions of the equations \autoref{sw eq} are called \textbf{generalized Seiberg--Witten (GSW) monopoles}.
\end{definition}

\begin{definition}\label{def auxiliary curvature 4d} We define the \textbf{auxiliary curvature operator} $\fR^+\in \End(\bS^+)$ by 
\[\fR^+:=\frac{\text{scal}_g}{4} + \tilde\gamma(F^+_B).\qedhere\]
\end{definition}
\begin{example}[{\textbf{ASD instantons}}]If $H=G\times \{\pm 1\}$ and $S=0$ then the GSW equations \autoref{sw eq} reduces to the anti-self duality (ASD) equations \cite{Donaldson1990} for a principal $G$-bundle. In this case obviously $\fR^+=0$.
\end{example}
\begin{example}[{\textbf{Harmonic spinors}}]If $H=\{\pm 1\}$ and $G=\{1\}$  then the GSW equations \autoref{sw eq} reduces to a Dirac equation whose solutions are harmonic spinors. In this case $\fR^+=\frac{\text{scal}_g}{4}$.
\end{example}
\begin{example}[{\textbf{Seiberg--Witten equations}}]If $H=G=\U(1)$, $S=\mathbb H$ and $\rho:\U(1)\to \Sp(1)$ is given by $$z\cdot q=qz\in \mathbb H=\mathbb C\oplus j\mathbb C$$ then the GSW equations \autoref{sw eq} reduces to the classical Seiberg--Witten equations (for more details see \cite[Example 1.1]{Walpuski2019}). In this case $\fR^+=\frac{\text{scal}_g}{4}$.
\end{example}
\begin{example}[{\textbf{$\Sp(1)$-Seiberg--Witten equations}}]If $H=G=\Sp(1)$, $S=\mathbb H$ and $\rho:\Sp(1)\to \Sp(1)$ is given by $$\rho(p)q=q\bar p$$ then the GSW equations \autoref{sw eq} reduces to the $\Sp(1)$-Seiberg--Witten equations (see \cite{Okonek1996}). In this case $\fR^+=\frac{\text{scal}_g}{4}$.
\end{example}
\begin{example}[{\textbf{$\U(n)$-monopole equations}}]If $H=G=\U(n)$, $S=\mathbb H\tn_{\mathbb C}\mathbb C^n$ and $\rho:\U(n)\to \Sp(S)$ is given by $$\rho(A) (q\tn w)=q\tn Aw$$ then the GSW equations \autoref{sw eq} reduces to the $\U(n)$-monopole equations (closely related to the $\PU(2)$-monopole equations studied in  \cite{Feehan1998}). In this case $\fR^+=\frac{\text{scal}_g}{4}$.
\end{example}
\begin{example}[{\textbf{Seiberg--Witten equations with $n$ spinors}}]If $H=G=\U(1)$  and $S=\mathbb H^n$ and $\rho:\U(1)\to \Sp(S)$ is given by $$\rho(z) (q_1,\dots,q_n)=(q_1z,\dots,q_nz)$$ then the GSW equations \autoref{sw eq} reduces to the Seiberg--Witten equations with $n$ spinors (see \cite{Bryan1996}). In this case $\fR^+=\frac{\text{scal}_g}{4}$.
\end{example}
\begin{example}[{\textbf{Vafa--Witten equations}}]\label{eg VW}Suppose $H=\Sp(1)\times G$ and $S=\mathbb H\tn_\mathbb R{\fg}$ and $\rho:\Sp(1)\times G\to \Sp(S)$ is given by $$\rho(p,g) (q\tn \xi)=q\bar p\tn \Ad(g)\xi.$$ The embedding ${\Sp(1)\times \Sp(1)}/{\{\pm 1\}}\hookrightarrow \Spin^{\Sp(1)}(4)$ given by $[p,q]\mapsto [p,q,p]$ and a principal $G$-bundle $P$, induce a $\Spin^H(4)$-structure on $X$. $B$ is induced by the Levi-Civita connection. Then the GSW equations \autoref{sw eq} reduces to the Vafa--Witten equations (see \cites{Mares2010,Taubes2017}). In this case,
 \[\bS^+=(\underline{\mathbb R}\oplus \Lambda^+T^*X)\tn \ad(P) \qandq \bS^-=T^*X\tn \ad(P),\] 
 and $\fR^+$ is a combination of scalar curvature and self-dual Weyl curvature.
\end{example}
\begin{example}[{\textbf{Complex ASD instanton}}]Suppose $H,G,S,\rho$ as in \autoref{eg VW}.  The embedding ${\Sp(1)\times \Sp(1)}/{\{\pm 1\}}\hookrightarrow \Spin^{\Sp(1)}(4)$ given by $[p,q]\mapsto [p,q,q]$ and a principal $G$-bundle $P$, induce a $\Spin^H(4)$-structure on $X$. $B$ is induced by the Levi-Civita connection. Then the GSW equations \autoref{sw eq} reduces to the complex ASD equations (see \cites{Taubes2013}). In this case, 
\[\bS^-=(\underline{\mathbb R}\oplus \Lambda^-T^*X)\tn \ad(P) \qandq \bS^+=T^*X\tn \ad(P),\] 
and $\fR^+=\Ric_g$.
\end{example}
\begin{example}[{\textbf{ADHM$_{r,k}$-Seiberg--Witten equations}}]If $H=\SU(r)\times \Sp(1)\times \U(k)$, $G=\U(k)$  and $S=\mathbb Hom_\mathbb C(\mathbb C^r,\mathbb H\tn_\mathbb C\mathbb C^k)\oplus \mathbb H\tn_\mathbb R\fu(k)$ and $\rho:H\to \Sp(S)$ is induced from the previous three examples, then the GSW equations \autoref{sw eq} reduces to the ADHM$_{r,k}$-Seiberg--Witten equations (see \cite[Example 1.15]{Walpuski2019}). 
\end{example}
\begin{prop}[{Lichenerowicz--Weitzenb\"ock formula, \cite[Proposition 5.1.5]{Morgan1996}}]\label{prop bochner weitzen}
	Suppose $A\in\sA(\fs,B)$ and $\Phi \in\Gamma(\mathbf S^+)$. Then
	\begin{equation*}
	\slD_A^*\slD_A\Phi=\nabla^*_A\nabla_A\Phi+\tilde\gamma(F^+_{\ad(A)})\Phi+ \fR^+\Phi.
	\end{equation*}
\end{prop}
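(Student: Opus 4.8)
The plan is to deduce this from the general Bochner–Lichnerowicz–Weitzenb\"ock identity for a Clifford connection, and then to identify the resulting curvature endomorphism by tracking how the connection $A$ distributes over the factors of $\Spin^H(4)$. First I would note that, since $A\in\sA(\fs,B)$ induces the Levi-Civita connection on $TX$, the induced covariant derivative $\nabla_A$ on $\bS=\bS^+\oplus\bS^-$ is a metric Clifford connection: it satisfies $\nabla_A(\gamma(v)\Phi)=\gamma(\nabla v)\Phi+\gamma(v)\nabla_A\Phi$ for $v\in\Gamma(TX)$. Consequently the total operator $\sum_i\gamma(e_i)\nabla_{A,e_i}$ on $\Gamma(\bS)$ is formally self-adjoint, its odd part from $\bS^+$ to $\bS^-$ is $\slD_A$ and its adjoint from $\bS^-$ to $\bS^+$ is $\slD_A^*$; hence $\slD_A^*\slD_A$ is exactly the restriction of the square of this total operator to $\Gamma(\bS^+)$, and it suffices to compute that square.

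Next I would carry out the classical synchronous-frame computation. Fixing $x\in X$ and an oriented orthonormal frame $\{e_i\}$ with $\nabla e_i|_x=0$, one expands the square as $\sum_{i,j}\gamma(e_i)\gamma(e_j)\nabla_{A,e_i}\nabla_{A,e_j}$ and separates the terms with $i=j$ from those with $i\neq j$. Using the Clifford relation $\gamma(e_i)\gamma(e_j)+\gamma(e_j)\gamma(e_i)=-2\delta_{ij}$, the diagonal terms assemble at $x$ into the connection Laplacian $\nabla_A^*\nabla_A$, while the off-diagonal terms produce the Clifford contraction $\tfrac12\sum_{i,j}\gamma(e_i)\gamma(e_j)\,F^{\bS^+}_A(e_i,e_j)$ of the curvature $F^{\bS^+}_A$ of $\nabla_A$ on $\bS^+$. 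This is the general Weitzenb\"ock identity, and the remaining task is to evaluate this curvature term.

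The heart of the argument is the decomposition of $F^{\bS^+}_A$ along the Lie-algebra splitting $\spin^H(4)=\bigl(\sp(1)\oplus\sp(1)\bigr)\oplus\fh$. Differentiating $\sigma_+[p_+,p_-,z]=\gamma(p_+)\rho(z)$ at the identity shows that the anti-self-dual $\sp(1)$ factor acts trivially on $\bS^+$, so $F^{\bS^+}_A$ is the sum of $\gamma$ applied to the $\spin(4)$-part of the curvature of $A$ — the image of the Riemann tensor — and $\rho$ applied to its $\fh$-part. Feeding the Riemann piece into the Clifford contraction and invoking the first Bianchi identity reproduces Lichnerowicz' scalar term $\tfrac{\text{scal}_g}{4}$. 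For the $\fh$-part I would use that $A$ induces $\ad(A)$ on $\ad(\fs)$ and $B$ on $\sK$, so this part splits into a $\fg$-piece with curvature $F_{\ad(A)}$ and a complementary piece with curvature $F_B$; Clifford-contracting each on $\bS^+$ and using the identification $\Im\mathbb H\cong\Lambda^+$ gives precisely $\tilde\gamma(F^+_{\ad(A)})$ and $\tilde\gamma(F^+_B)$. The one genuinely four-dimensional input — and the step I expect to require the most care — is this self-duality projection: that $\gamma\colon\Lambda^2T^*X\to\End(\bS^+)$ annihilates $\Lambda^-$, equivalently that the anti-self-dual $\sp(1)$ factor acts trivially on $\bS^+$, which is what forces only the self-dual curvatures $F^+_{\ad(A)}$ and $F^+_B$ to appear.

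Finally, assembling the three contributions $\tfrac{\text{scal}_g}{4}$, $\tilde\gamma(F^+_{\ad(A)})$, and $\tilde\gamma(F^+_B)$, and recalling the definition $\fR^+=\tfrac{\text{scal}_g}{4}+\tilde\gamma(F^+_B)$ from \autoref{def auxiliary curvature 4d}, yields the asserted identity $\slD_A^*\slD_A\Phi=\nabla_A^*\nabla_A\Phi+\tilde\gamma(F^+_{\ad(A)})\Phi+\fR^+\Phi$. Since both sides are pointwise tensorial in $\Phi$, equality at the arbitrary point $x$ completes the proof.
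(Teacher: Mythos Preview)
Your argument is the standard derivation of the Lichnerowicz--Weitzenb\"ock formula adapted to the $\Spin^H$ setting, and it is correct. Note, however, that the paper does not supply its own proof of this proposition: it is stated with a citation to \cite[Proposition 5.1.5]{Morgan1996} and used as a black box thereafter. So there is nothing to compare against beyond the referenced source, and your sketch is precisely the computation one finds there (or in any treatment of Dirac operators twisted by a bundle with connection), specialized to identify the curvature pieces as $\tfrac{\text{scal}_g}{4}$, $\tilde\gamma(F^+_{\ad(A)})$, and $\tilde\gamma(F^+_B)$.
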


The following identities, whose proofs are similar to the proofs of the identities in \cite[Appendix B]{Doan2017a} for dimension three, will be useful in later sections. 
\begin{prop}\label{prop identitiy}
	For $\Phi \in\Gamma(\mathbf S^+)$, we have
	$\inp{\tilde\gamma(\mu(\Phi))\Phi}{\Phi}=2\abs {\mu(\Phi) }^2$
	and 
	$$d_{\ad(A)}^*\mu(\Phi)=2*\mu(\slD_A\Phi,\Phi)-\rho^*((\nabla_A\Phi)\Phi^*)$$
\end{prop}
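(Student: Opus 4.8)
The plan is to derive both identities from the single defining property of the hyperkähler moment map recorded above, namely $\inp{(d\mu)_\Phi\phi}{v\otimes\xi} = \inp{\tilde\gamma(v\otimes\xi)\Phi}{\phi}$ for $v\in\Im\H$, $\xi\in\fg$, $\phi\in S$, together with the two Clifford relations that are built into the quaternionic formalism, $\gamma(v)\gamma(w) = \gamma(vw)$ and $\gamma(v)^* = \gamma(\bar v)$. The computation for the second identity runs parallel to the three-dimensional one in \cite[Appendix B]{Doan2017a}.

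\textbf{First identity.} Setting $\phi=\Phi$ in the defining property gives $(d\mu)_\Phi\Phi = \tilde\gamma^*(\Phi\Phi^*) = 2\mu(\Phi)$, hence $\inp{\tilde\gamma(v\otimes\xi)\Phi}{\Phi} = 2\inp{\mu(\Phi)}{v\otimes\xi}$ for all $v,\xi$. I would then expand $\mu(\Phi)\in\Lambda^+T^*X\otimes\ad(\fs)$ in an orthonormal local frame of $\Lambda^+T^*X$ — which, under the identification $v\mapsto\inp{dq\wedge d\bar q}{v}$, corresponds to a basis of $\Im\H$ — and use $\R$-bilinearity of $\tilde\gamma$ and of the pairing to sum the previous relation, obtaining $\inp{\tilde\gamma(\mu(\Phi))\Phi}{\Phi} = 2\inp{\mu(\Phi)}{\mu(\Phi)} = 2\abs{\mu(\Phi)}^2$. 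The only point requiring attention is that the inner product on $\Im\H$ implicit in the definition of $\tilde\gamma^*$ matches, under the stated identification, the metric inner product on $\Lambda^+T^*X$; this is a normalisation already fixed in \cite{Walpuski2019}.

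\textbf{Second identity.} I would compute pointwise. Fix $p\in X$ and an oriented orthonormal frame $\{e_i\}$ synchronous at $p$, so that $d^*_{\ad(A)}\mu(\Phi) = -\sum_i\iota_{e_i}\nabla_{\ad(A),e_i}\mu(\Phi)$ at $p$; equivalently, its $e^l$-component paired with $\xi\in\fg$ equals $-\sum_i\del_{e_i}\inp{\mu(\Phi)(e_i,e_l)}{\xi}$, using metric compatibility of $\ad(A)$. Since $\tilde\gamma^*$ is a parallel bundle map, and by the defining property, the quantity $\inp{\mu(\Phi)(e_i,e_l)}{\xi}$ is, up to the fixed normalisation of the identification, $\inp{\gamma(e_l)\rho(\xi)\Phi}{\gamma(e_i)\Phi}$, the diagonal $i=l$ term contributing nothing because $\mu(\Phi)$ is a $2$-form. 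Differentiating and applying the Leibniz rule splits the sum over $i$ into a term carrying $\nabla_{A,e_i}\Phi$ in the left slot and one carrying it in the right slot. Moving $\gamma(e_i)^* = \gamma(\bar e_i)$ across, and using $\gamma(\bar e_i)\gamma(e_l) = \gamma(\bar e_i e_l)$ together with the anticommutation $\bar e_i e_l = -\bar e_l e_i$ valid for $i\ne l$, the summations over $i$ reassemble the Dirac operator $\slD_A\Phi = \sum_i\gamma(e_i)\nabla_{A,e_i}\Phi$. After undoing the identification $\Im\H\cong\Lambda^+T^*X$ and applying the Hodge star, the $\slD_A\Phi$-contributions combine into $2*\mu(\slD_A\Phi,\Phi)$, while the residual diagonal contributions combine, via $\inp{\rho^*(\psi\phi^*)}{\xi} = \Re\inp{\rho(\xi)\phi}{\psi}$, into $-\rho^*((\nabla_A\Phi)\Phi^*)$, which is the claimed formula.

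\textbf{Main obstacle.} The conceptual content is modest; the difficulty lies entirely in bookkeeping the normalisation constants and signs. The two delicate points are: (i) the precise effect of the identification $\Im\H\cong\Lambda^+T^*X$ and of the Hodge star relating the $3$-form $\mu(\slD_A\Phi,\Phi)$ to a $1$-form — this is what pins the coefficient of the Dirac term to be exactly $2$ and fixes the sign of the $\rho^*$-term; and (ii) the polarisation convention for $\mu(\slD_A\Phi,\Phi)$, whose two arguments lie in the distinct bundles $\bS^-$ and $\bS^+$, so that the version of $\tilde\gamma$ used must be the one mapping $\bS^+\to\bS^-$, consistently throughout. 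Since the analogous three-dimensional identities are carried out in detail in \cite[Appendix B]{Doan2017a}, I would organise the write-up so that, once the dimension-four conventions are in place, it reduces to invoking that computation.
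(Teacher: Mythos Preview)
Your proposal is correct and matches the paper's own treatment: the paper does not write out a proof but simply remarks that the identities are proved as in \cite[Appendix B]{Doan2017a} for dimension three, which is precisely the reduction you organise your argument around. Your sketch of the first identity via $(d\mu)_\Phi\Phi = 2\mu(\Phi)$ and of the second via a synchronous-frame Leibniz computation reassembling $\slD_A$ is exactly the content of that reference, and your identification of the normalisation bookkeeping as the only genuine care point is accurate.
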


We define a Yang--Mills--Higgs energy (YMH) functional on the space $\sA(\fs, B)\times \Gamma(\bS^+)$ which maps $(A,\Phi)\to \sE_4(A,\Phi)\in \mathbb R$. We will also see in the following that on an oriented closed $4$-manifold absolute minima of this functional are generalized Seiberg--Witten monopoles.

\begin{definition}
We define the \textbf{Yang--Mills--Higgs energy functional} $\sE_4:\sA(\fs,B) \times  \Gamma(\mathbf S^+)\to \mathbb R$ by
	\[\sE_4(A,\Phi)=\int_X \frac 12 \abs{F_{\ad(A)}}^2+\abs{\nabla_A\Phi}^2 +\abs{\mu(\Phi)}^2+\inp{\fR^+ \Phi}{\Phi}.\qedhere\]
\end{definition}
\begin{remark}\label{rmk energy equal}
If $X$ is closed then for $A\in\sA(\fs,B) $ and $\Phi \in  \Gamma(\mathbf S^+)$ we obtain using \autoref{prop bochner weitzen} that
	$$\sE_4(A,\Phi)=\int_X \abs{F^+_{\ad(A)}-\mu(\Phi)}^2+\abs{\slD_A\Phi}^2+8\pi^2 \Check{h}(G)k(\ad(A)),$$
	where $k(\ad(A)):=\frac 1{8\pi^2\Check{h}(G)}\int_X \langle F_{\ad(A)}\wedge F_{\ad(A)}\rangle$ is a constant topological term, called instanton number and $\Check{h}(G)$ is the dual Coxeter number of $G$. Indeed,
 \begin{align*}
	&\int_X \abs{F^+_{\ad(A)}-\mu(\Phi)}^2+\abs{\slD_A\Phi}^2 \\
	&=\int_X \abs{F^+_{\ad(A)}}^2+\abs{\mu(\Phi)}^2-2\inp{F^+_{\ad(A)}}{\mu(\Phi)}+\inp{\slD_A^*\slD_A\Phi}{\Phi}\\
	&=\int_X \abs{F^+_{\ad(A)}}^2+\abs{\mu(\Phi)}^2+\abs{\nabla_A\Phi}^2+ \inp{\fR^+ \Phi}{\Phi}=\sE_4(A,\Phi)-\int_X \langle F_{\ad(A)}\wedge F_{\ad(A)}\rangle.
	\end{align*}
 Therefore the absolute minima of this functional are generalized Seiberg--Witten monopoles.
\end{remark}

\begin{prop}
	The \textbf{Euler--Lagrange equations} for the Yang--Mills--Higgs energy functional $\sE_4$ are the following equations: for $A\in\sA(\fs,B) $, $\Phi \in  \Gamma(\mathbf S^+)$,
	\begin{equation}\label{eq:EL4}
	\begin{split}
	d_{\ad(A)}^*F_{\ad(A)}&=-2\rho^*((\nabla_A\Phi)\Phi^*),\\
	\nabla_A^*\nabla_A\Phi&=-\tilde \gamma(\mu(\Phi))\Phi -\fR^+\Phi
	\end{split}
	\end{equation}
\end{prop}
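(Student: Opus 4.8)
The plan is to derive \eqref{eq:EL4} by computing the first variation of $\sE_4$ separately in the connection and in the spinor, using compactly supported perturbations so that integration by parts produces no boundary terms even when $X$ is noncompact. First I would record the standard infinitesimal formulas: for $a \in \Omega^1_c(X, \ad(\fs))$ one has $\frac{d}{dt}\big|_{t=0} F_{\ad(A+ta)} = d_{\ad(A)}a$ and $\frac{d}{dt}\big|_{t=0} \nabla_{A+ta}\Phi = \rho(a)\Phi$, the latter because connections in $\sA(\fs,B)$ differ only in the $\ad(\fs)$-direction and the differential of $\sigma_+$ restricted to $\fg$ is $\rho$. I would also use that $\fR^+$ is a symmetric endomorphism of $\bS^+$ independent of $A$ (as is $\mu$), together with the characterization $\inp{(d\mu)_\Phi\phi}{\eta} = \inp{\tilde\gamma(\eta)\Phi}{\phi}$ for $\eta$ a $\Lambda^+T^*X \tn \ad(\fs)$-valued section, recorded in the setup.

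For the variation in $A$: since $\mu(\Phi)$ and $\inp{\fR^+\Phi}{\Phi}$ are $A$-independent, $\frac{d}{dt}\big|_{t=0}\sE_4(A+ta,\Phi) = \int_X \inp{F_{\ad(A)}}{d_{\ad(A)}a} + 2\inp{\nabla_A\Phi}{\rho(a)\Phi}$. Integrating the first integrand by parts turns it into $\inp{d_{\ad(A)}^*F_{\ad(A)}}{a}$. For the second, I would expand in a local orthonormal coframe and use the adjoint $\rho^*\colon \End(\bS^+)\to\ad(\fs)$ to rewrite $2\inp{\nabla_A\Phi}{\rho(a)\Phi}$ as $2\inp{\rho^*((\nabla_A\Phi)\Phi^*)}{a}$, where $\rho^*((\nabla_A\Phi)\Phi^*)$ is now viewed as an element of $\Omega^1(X,\ad(\fs))$; this is the same algebraic step used in the proof of \autoref{prop identitiy}. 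Demanding the sum vanish for all such $a$ gives the first equation of \eqref{eq:EL4}.

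For the variation in $\Phi$: differentiating $\sE_4(A,\Phi+t\phi)$ at $t=0$ for $\phi \in \Gamma_c(\bS^+)$, and using symmetry of $\fR^+$, gives $\int_X 2\inp{\nabla_A\Phi}{\nabla_A\phi} + 2\inp{\mu(\Phi)}{(d\mu)_\Phi\phi} + 2\inp{\fR^+\Phi}{\phi}$. Integration by parts converts the first term to $2\inp{\nabla_A^*\nabla_A\Phi}{\phi}$, and the characterization of $d\mu$ with $\eta = \mu(\Phi)$ converts the middle term to $2\inp{\tilde\gamma(\mu(\Phi))\Phi}{\phi}$. So the variation equals $\int_X 2\inp{\nabla_A^*\nabla_A\Phi + \tilde\gamma(\mu(\Phi))\Phi + \fR^+\Phi}{\phi}$, and vanishing for all $\phi$ yields the second equation.

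The one point that needs genuine care --- the rest being routine integration by parts --- is the sign-and-convention bookkeeping in the $A$-variation of $\abs{\nabla_A\Phi}^2$: one must correctly identify the infinitesimal action $\rho(a)\Phi$ and fix signs so that $2\inp{\nabla_A\Phi}{\rho(a)\Phi}$ transposes to exactly the term $-2\rho^*((\nabla_A\Phi)\Phi^*)$ on the right of \eqref{eq:EL4}. Since this is the same computation underlying the identity for $d_{\ad(A)}^*\mu(\Phi)$ in \autoref{prop identitiy}, I would fix the conventions by cross-checking against that identity.
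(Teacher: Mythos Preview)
Your proposal is correct and follows essentially the same approach as the paper: compactly supported variations $a\in\Omega^1_c(X,\ad(\fs))$ and $\phi\in\Gamma_c(\bS^+)$, differentiation of each summand of $\sE_4$, integration by parts, and the moment-map identity to turn $\inp{\mu(\Phi)}{(d\mu)_\Phi\phi}$ into $\inp{\tilde\gamma(\mu(\Phi))\Phi}{\phi}$. The only cosmetic difference is that the paper records the variation of $\|\nabla_{A+ta}(\Phi+t\phi)\|_{L^2}^2$ jointly in $(a,\phi)$ in a single line, whereas you split the $A$- and $\Phi$-variations; your caution about the sign in $2\inp{\nabla_A\Phi}{\rho(a)\Phi}=2\inp{\rho^*((\nabla_A\Phi)\Phi^*)}{a}$ is exactly the bookkeeping the paper is implicitly invoking.
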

\begin{proof}Suppose $A \in\sA(\fs,B)$, $a\in \Omega^1(X,\ad(\fs))$, $\Phi,\phi \in  \Gamma(\mathbf S^+)$. Assume that $a, \phi$ are compactly supported. The proof requires only the following direct computations. For $\abs{t}\ll 1$ we obtain
$$\frac 12\frac d {dt}\|F_{\ad(A)+ta}\|_{L^2}^2=\inp{d_{\ad(A)}^*F_{\ad(A)}}{a}_{L^2}+O(t),$$
$$\frac d {dt}\|\nabla_{A+ta}(\Phi+t\phi)\|_{L^2}^2=2\inp{\nabla_A^*\nabla_A\Phi}{\phi}_{L^2}+2\inp{\rho^*((\nabla_A\Phi)\Phi^*)}{a}_{L^2}+O(t),$$
$$\frac d {dt}\|\mu(\Phi+t\Psi)\|_{L^2}^2=2\inp{\tilde \gamma(\mu(\Phi))\Phi}{\phi}_{L^2}+O(t),$$
and
\[\frac d {dt}\inp{\fR^+ (\Phi+t\phi)}{\Phi+t\phi}_{L^2}=2\inp{\fR^+\Phi}{\phi}_{L^2}+O(t). \qedhere\]
	\end{proof}
\begin{remark}
	If $(A,\Phi)$ is a GSW monopole then it satisfies the Euler--Lagrange equations \autoref{eq:EL4}. Indeed, this follows from \autoref{rmk energy equal} directly. Alternatively we can do a direct computation with the help of \autoref{prop identitiy}:
	\begin{align*}
		d^*_{\ad(A)} F_{\ad(A)}=2 d^*_{\ad(A)} F_{\ad(A)}^+&=2d^*_{\ad(A)}\mu(\Phi)\\
		&= 4*\mu(\slD_A\Phi,\Phi)-2\rho^*((\nabla_A\Phi)\Phi^*)= -2\rho^*((\nabla_A\Phi)\Phi^*).
		\end{align*}
Lichenerowicz--Weitzenb\"ock formula of \autoref{prop bochner weitzen} implies 
\[\nabla_A^*\nabla_A\Phi
		=-\tilde \gamma(\mu(\Phi))\Phi- \mathcal R\Phi. \qedhere\]
	\end{remark}
By taking inner product with $\Phi$ in the second equation of the equations \autoref{eq:EL4}, we derive the following Bochner identity as a corollary.
	\begin{cor}\label{Bochner1}
	Let $(A, \Phi)$ be a solution to the generalized Seiberg--Witten equations \autoref{eq GSW 4 d formal}, or more generally, to the Euler--Lagrange equations \autoref{eq:EL4}. Then
	\begin{equation}\label{eq bochner 4 cor}
	\frac 12\Delta\abs{\Phi}^2+\abs{\nabla_A\Phi}^2+2\abs {\mu(\Phi) }^2 +\inp{\fR^+\Phi}{\Phi}=0. \qedhere
	\end{equation}
\end{cor}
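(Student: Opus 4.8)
The plan is to read off \autoref{eq bochner 4 cor} directly from the second Euler--Lagrange equation in \autoref{eq:EL4} by pairing it with the spinor and invoking the algebraic facts already recorded. First I would take the pointwise inner product of
\[
\nabla_A^*\nabla_A\Phi=-\tilde\gamma(\mu(\Phi))\Phi-\fR^+\Phi
\]
with $\Phi$ itself, obtaining $\inp{\nabla_A^*\nabla_A\Phi}{\Phi}=-\inp{\tilde\gamma(\mu(\Phi))\Phi}{\Phi}-\inp{\fR^+\Phi}{\Phi}$ at every point. When $(A,\Phi)$ is a GSW monopole one may instead start from the Weitzenb\"ock formula of \autoref{prop bochner weitzen}, use $\slD_A\Phi=0$ and $F^+_{\ad(A)}=\mu(\Phi)$ to recover this same displayed identity; either way one lands on the same relation.

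The second ingredient is the elementary Bochner identity for a metric connection: since $\nabla_A$ is compatible with the Hermitian metric on $\bS^+$, differentiating $\abs{\Phi}^2$ twice (in Cartesian coordinates on $\mathbb R^4$, or in normal coordinates at a point in general) gives
\[
\tfrac12\Delta\abs{\Phi}^2=\inp{\nabla_A^*\nabla_A\Phi}{\Phi}-\abs{\nabla_A\Phi}^2,
\]
where $\Delta=-\sum_i\partial_i^2$ denotes the nonnegative Laplace operator. Substituting this into the previous relation yields $\tfrac12\Delta\abs{\Phi}^2+\abs{\nabla_A\Phi}^2=-\inp{\tilde\gamma(\mu(\Phi))\Phi}{\Phi}-\inp{\fR^+\Phi}{\Phi}$. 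Finally I would apply the first identity of \autoref{prop identitiy}, namely $\inp{\tilde\gamma(\mu(\Phi))\Phi}{\Phi}=2\abs{\mu(\Phi)}^2$, and move everything to one side to obtain exactly \autoref{eq bochner 4 cor}.

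There is essentially no obstacle here; the corollary is a one-line consequence of \autoref{eq:EL4} and \autoref{prop identitiy}. The only points worth double-checking are the sign conventions --- that $\Delta$ is taken nonnegative and that the relevant second-order operator on $\Phi$ is the connection (rough) Laplacian $\nabla_A^*\nabla_A$ rather than the Hodge Laplacian --- and that the constant $2$ in front of $\abs{\mu(\Phi)}^2$ matches the normalization of $\mu$ fixed in \autoref{prop identitiy}; both are consistent with the statements already established above.
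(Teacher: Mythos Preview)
Your proposal is correct and follows exactly the approach indicated in the paper: take the pointwise inner product of the second Euler--Lagrange equation with $\Phi$, use the standard identity $\tfrac12\Delta\abs{\Phi}^2=\inp{\nabla_A^*\nabla_A\Phi}{\Phi}-\abs{\nabla_A\Phi}^2$, and invoke \autoref{prop identitiy} to replace $\inp{\tilde\gamma(\mu(\Phi))\Phi}{\Phi}$ by $2\abs{\mu(\Phi)}^2$. The sign and normalization checks you flag are consistent with the paper's conventions.
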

The following corollary is obtained by applying an integration by parts to the above Bochner identity.
\begin{cor}
	Let $\Omega$ be a bounded open subset of $X$ with smooth boundary $\partial \Omega$  and $f\in C^\infty(\bar \Omega)$. Suppose $(A,\Phi)$ satisfies the equations \autoref{eq bochner 4 cor} on $\Omega$, then 
	\[\frac 12 \int_\Omega \Delta f \cdot \abs{\Phi}^2+\int_\Omega f \cdot (\abs{\nabla_A\Phi}^2+2\abs {\mu(\Phi) }^2)=-\int_\Omega f \cdot \inp{\fR^+\Phi}{\Phi}+\frac 12 \int_{\partial \Omega} f\cdot \partial_\nu\abs{\Phi}^2-\partial_\nu f \cdot\abs{\Phi}^2.\qedhere\]
\end{cor}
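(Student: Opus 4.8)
The statement is a routine integration by parts applied to the pointwise Bochner identity \autoref{eq bochner 4 cor}, so the plan is short. First I would multiply \autoref{eq bochner 4 cor} by $f \in C^\infty(\bar\Omega)$ and integrate over $\Omega$, obtaining
\[\frac 12\int_\Omega f\cdot\Delta\abs{\Phi}^2 + \int_\Omega f\cdot\big(\abs{\nabla_A\Phi}^2 + 2\abs{\mu(\Phi)}^2\big) + \int_\Omega f\cdot\inp{\fR^+\Phi}{\Phi} = 0.\]
Here $\abs{\Phi}^2\in C^\infty$ — in the applications $\Phi$ is a smooth solution of \autoref{eq GSW 4 d formal} or \autoref{eq:EL4}, and at the very least the validity of \autoref{eq bochner 4 cor} requires $\abs{\Phi}^2\in C^2(\Omega)$ — so, since $f\in C^\infty(\bar\Omega)$ and $\partial\Omega$ is smooth, every integral above is finite and the boundary terms below are well defined.

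Next I would transfer the Laplacian from $\abs{\Phi}^2$ onto $f$ via Green's second identity. With the sign convention for $\Delta$ forced by consistency with \autoref{prop bochner weitzen} and \autoref{Bochner1}, namely the nonnegative Laplacian (so $\Delta = -\sum_i\partial_i\partial_i$ on $\R^4$), and with $\partial_\nu$ the outward unit normal derivative along $\partial\Omega$, one has
\[\int_\Omega\big(f\cdot\Delta u - u\cdot\Delta f\big) = \int_{\partial\Omega}\big(u\,\partial_\nu f - f\,\partial_\nu u\big)\]
for $u, f\in C^\infty(\bar\Omega)$. Applying this with $u = \abs{\Phi}^2$ gives
\[\frac 12\int_\Omega f\cdot\Delta\abs{\Phi}^2 = \frac 12\int_\Omega \Delta f\cdot\abs{\Phi}^2 + \frac 12\int_{\partial\Omega}\big(\partial_\nu f\cdot\abs{\Phi}^2 - f\,\partial_\nu\abs{\Phi}^2\big).\]
Substituting this into the integrated Bochner identity and moving the curvature integral and the boundary integral to the right-hand side produces exactly the claimed formula.

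There is no genuine obstacle; the statement is elementary once \autoref{eq bochner 4 cor} is in hand. The only two points worth a word of care are: (i) bookkeeping the sign convention for $\Delta$, which is pinned down by consistency with the Weitzenb\"ock formula \autoref{prop bochner weitzen} used to derive \autoref{eq bochner 4 cor}; and (ii) observing that $\abs{\Phi}^2$ and $f$ are smooth up to the smooth boundary $\partial\Omega$, which is what legitimizes Green's identity — equivalently, the divergence theorem applied to the vector field $f\,\nabla\abs{\Phi}^2 - \abs{\Phi}^2\,\nabla f$ — and the boundary term it contributes.
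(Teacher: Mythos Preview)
Your proposal is correct and is exactly the argument the paper has in mind: the corollary is stated without proof, with only the remark that it ``is obtained by applying an integration by parts to the above Bochner identity,'' and your use of Green's second identity (with the nonnegative Laplacian convention fixed by \autoref{prop bochner weitzen}) is precisely that integration by parts.
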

The next proposition highlights how the maximum principle imposes significant restrictions on the behavior of GSW monopoles under the assumption of non-negative self-dual auxiliary curvature. 
\begin{prop}
	Let $(X,g)$ be an oriented Riemannian $4$-manifold and $(A,\Phi)$ be a GSW monopole or more generally a solution of the Euler--Lagrange equations \autoref{eq:EL4}. Assume $\fR^+\geq0 $ (\text{i.e.} $\inp{\fR^+\Phi}{\Phi}\geq 0 \  \forall \Phi \in \Gamma(\bS^+))$. 
	\begin{enumerate}[(i)] 
		\item If $X$ is closed then $\abs{\Phi}$ is constant, or equivalently \[\nabla_A\Phi= 0, \quad \mu(\Phi)=0 \qandq \inp{\fR^+\Phi}{\Phi}=0.\]
	\item If $X$ is noncompact and $\abs{\Phi}^2$ decays to zero at infinity then $\Phi=0$.\qedhere
	\end{enumerate}
\end{prop}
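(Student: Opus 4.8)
The plan is to read the statement off the Bochner identity of \autoref{Bochner1} via the maximum principle. First I would observe that, since $(A,\Phi)$ is a GSW monopole or more generally solves the Euler--Lagrange equations \autoref{eq:EL4}, \autoref{Bochner1} gives, pointwise on $X$,
\[
\tfrac12\,\Delta\abs{\Phi}^2 = -\abs{\nabla_A\Phi}^2 - 2\abs{\mu(\Phi)}^2 - \inp{\fR^+\Phi}{\Phi}.
\]
Under the hypothesis $\fR^+\ge 0$ the right-hand side is everywhere $\le 0$, so $u:=\abs{\Phi}^2$ is a smooth (by elliptic regularity of $\Phi$), non-negative, subharmonic function, where $\Delta=d^*d$ is the positive Laplacian used throughout. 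Everything then reduces to the classical maximum principle applied to $u$.

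For part (i), on a closed connected $X$ a subharmonic function attains its maximum at an interior point and is therefore constant; equivalently --- and this is the cleanest route --- I would integrate the displayed identity over $X$, use $\int_X\Delta u=0$ (divergence theorem), and conclude $\int_X \abs{\nabla_A\Phi}^2 + 2\abs{\mu(\Phi)}^2 + \inp{\fR^+\Phi}{\Phi}=0$. Since each summand is non-negative, each vanishes identically, which is exactly $\nabla_A\Phi=0$, $\mu(\Phi)=0$ and $\inp{\fR^+\Phi}{\Phi}=0$; in particular $\abs{\Phi}$ is constant. If $X$ is disconnected one applies this on each component.

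For part (ii), I would fix $\varepsilon>0$. Since $\abs{\Phi}^2$ decays to zero at infinity there is a compact $K\subset X$ with $u<\varepsilon$ on $X\setminus K$; enlarging $K$ to a regular sublevel set of a proper smooth exhaustion function of $X$, I would obtain a compact domain $\Omega$ with smooth boundary satisfying $K\subseteq\Omega$ and $\partial\Omega\subset X\setminus K$. The weak maximum principle for the subharmonic function $u$ on $\overline\Omega$ then gives $\sup_\Omega u=\max_{\partial\Omega}u<\varepsilon$, while $u<\varepsilon$ off $\Omega$ as well, so $\sup_X u\le\varepsilon$; letting $\varepsilon\downarrow 0$ forces $u\equiv 0$, i.e.\ $\Phi\equiv 0$.

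I do not expect a serious obstacle: once \autoref{Bochner1} is available the argument is entirely standard. The only points needing (routine) care are fixing the sign and normalization of $\Delta$ so that ``$\Delta u\le 0$'' is genuinely the subharmonic case to which the maximum principle applies, and, in (ii), the elementary construction of a compact domain $\Omega$ with smooth boundary separating $K$ from infinity.
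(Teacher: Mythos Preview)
Your proposal is correct and follows essentially the same approach as the paper: both derive $\tfrac12\Delta\abs{\Phi}^2\le 0$ from the Bochner identity and invoke the maximum principle. The paper's proof is a single sentence to this effect; your write-up simply fills in the details (and offers the equivalent integration argument for (i)), but there is no substantive difference.
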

\begin{proof}
	Since $\frac 12 \Delta \abs{\Phi}^2= \inp{\nabla_A^*\nabla_A\Phi}{\Phi}-\abs{\nabla_A\Phi}^2=-2\abs{\mu(\Phi)}^2-\abs{\nabla_A\Phi}^2-\inp{\fR^+\Phi}{\Phi}\leq 0$,  $\abs{\Phi}^2$ is subharmonic. This implies the required assertions after applying the maximum principle.
\end{proof}

\subsection{Frequency function and the proof of \autoref{growththeorem4} (\ref{growththeorem4_part1})} \label{frequency_4d}
Throughout this subsection, we impose the following standing assumption, which is a part of \autoref{growththeorem4}.
\begin{hypothesis}\label{hyp main 4d} $X=\mathbb R^4$ with the standard Euclidean metric and orientation, and the auxiliary connection $B$ is chosen so that the auxiliary curvature operator $\fR^+=\tilde \gamma(F_B^+)\in \End(\bS^+)$ vanishes.
\end{hypothesis}

Let $(A, \Phi)$ be a solution to the generalized Seiberg--Witten equations \autoref{eq GSW 4 d formal}, or more generally, to the Euler--Lagrange equations \autoref{eq:EL4} associated with the Yang--Mills--Higgs energy functional $\sE_4$. Denote by $r$ the radial distance function from the origin in $\mathbb R^4$. \autoref{growththeorem4}~\autoref{growththeorem4_part1} concerns the asymptotic behavior of the $L^2$-norm of $\Phi$ averaged over spheres of radius $r$ as $r \to \infty$. To investigate this behavior, we employ Almgren's frequency function $N(r)$, originally introduced in the context of harmonic functions \cite{Almgren1979} and later adapted to gauge theory by \citet{Taubes2012}, along with a slight modification suited to our setting. The strength of this (modified) frequency function lies in its monotonicity, which controls growth behavior of the spinor. In particular, a uniform lower bound $\alpha > 0$ on it implies that the averaged $L^2$-norm of $\Phi$ at least grow like $r^{\alpha}$.  Our treatment closely follows the approach in \cite{Walpuski2019}.
\begin{definition} Denote by $B_{r}$ the open ball in $\mathbb R^4$ centered at $0$. 
	\begin{enumerate}[i)]
		\item For every $r>0$ we define 
		$$m(r):=\frac 1{r^3}\int_{\partial B_r}\abs{\Phi}^2\ \ \  \text{and}\ \ \ D(r):=\frac 1{r^2}\int_{B_r}\abs{\nabla_A \Phi}^2+2\abs{\mu(\Phi)}^2.$$
		\item Set $r_{-1}:= \sup \{0,r: r\in (0,\infty):m(r)=0\}$. The \textbf{frequency function} $N: (r_{-1},\infty)\to [0,\infty)$ is defined by 
		\[N(r):=\frac{D(r)}{m(r)}=\frac {r\int_{B_r}\abs{\nabla_A \Phi}^2+2\abs{\mu(\Phi)}^2 }{\int_{\partial B_r}\abs{\Phi}^2}.\qedhere\]
	\end{enumerate}
\end{definition}
Our objective is to analyze the monotonicity behavior of $N(r)$ , and for that, we need to compute its derivative, $N^\prime(r)$. To begin, we first calculate the derivative of the squared $L^2$-norm average of $\Phi$, $m(r)$ as follows:
\begin{prop}\label{prop m prime}For every $r>0$,
\[m^\prime(r)=\frac {2D(r)}r.\]
\end{prop}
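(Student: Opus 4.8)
The plan is to differentiate $m$ directly, after first rewriting it as an integral over the fixed unit sphere, and then to turn the resulting boundary integral back into a bulk integral by invoking the Bochner identity of \autoref{Bochner1}.

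First I would pass to polar coordinates: $m(r) = r^{-3}\int_{\partial B_r}\abs{\Phi}^2 = \int_{S^3}\abs{\Phi(r\omega)}^2\,d\omega$, where $S^3\subset\mathbb R^4$ is the unit sphere. Since $(A,\Phi)$ solves an elliptic system, $\Phi$ is smooth, hence so is the gauge-invariant function $\abs{\Phi}^2$; this justifies differentiating under the integral sign, giving
\[
m'(r) = \int_{S^3}\frac{d}{dr}\abs{\Phi(r\omega)}^2\,d\omega = \int_{S^3}\big(\partial_\nu\abs{\Phi}^2\big)(r\omega)\,d\omega = \frac{1}{r^3}\int_{\partial B_r}\partial_\nu\abs{\Phi}^2,
\]
where $\partial_\nu$ denotes the outward radial derivative along $\partial B_r$.

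Next I would apply the divergence theorem on $B_r$ to get $\int_{\partial B_r}\partial_\nu\abs{\Phi}^2 = -\int_{B_r}\Delta\abs{\Phi}^2$, using the paper's sign convention for $\Delta$ (the one under which $\nabla_A^*\nabla_A$ appears with a $+$ sign in \autoref{Bochner1}). By \autoref{Bochner1}, and since $\fR^+=0$ under \autoref{hyp main 4d}, one has $\Delta\abs{\Phi}^2 = -2\big(\abs{\nabla_A\Phi}^2 + 2\abs{\mu(\Phi)}^2\big)$, so
\[
\int_{\partial B_r}\partial_\nu\abs{\Phi}^2 = 2\int_{B_r}\abs{\nabla_A\Phi}^2 + 2\abs{\mu(\Phi)}^2 = 2r^2 D(r);
\]
equivalently, this is the integration-by-parts corollary stated just above, specialized to $f\equiv 1$. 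Combining the two displays yields $m'(r) = r^{-3}\cdot 2r^2 D(r) = 2D(r)/r$.

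I do not expect any genuine obstacle here. The only points needing care are the bookkeeping of the Laplacian sign convention and of the surface-measure factor $r^3 = r^{\dim X-1}$, together with the (routine) justification for differentiating under the integral, which is immediate from the smoothness of $\Phi$; this last point is the closest thing to a technical step.
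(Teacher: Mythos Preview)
Your proof is correct and follows essentially the same route as the paper's: both reduce to $m'(r)=r^{-3}\int_{\partial B_r}\partial_r\abs{\Phi}^2$ and then convert this to the bulk integral via the divergence theorem and the Bochner identity \autoref{Bochner1}. The only cosmetic difference is that you pull back to the unit sphere to differentiate, whereas the paper applies the product rule to $r^{-3}\int_{\partial B_r}\abs{\Phi}^2$ and uses $\frac{d}{dr}\int_{\partial B_r}h=\int_{\partial B_r}\tfrac{3}{r}h+\int_{\partial B_r}\partial_r h$; the $3/r$ terms then cancel.
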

\begin{proof}The proof is a direct computation.
	\begin{align*}
	m^\prime(r)&= \frac 1{ r^3} \frac {d}{dr}\int_{\partial B_r}\abs{\Phi}^2-\frac 3{r^4} \int_{\partial B_r}\abs{\Phi}^2\\
	&=\frac 1{r^3} (\int_{\partial B_r}\frac 3r\abs{\Phi}^2+ \int_{\partial B_r}\partial_r\abs{\Phi}^2)-\frac 3{r^4} \int_{\partial B_r}\abs{\Phi}^2=\frac 2{ r^3}\int_{B_r}\abs{\nabla_A \Phi}^2+2\abs{\mu(\Phi)}^2=\frac 2r D(r).\qedhere
	\end{align*}
\end{proof}
\begin{cor}\label{cor m prime}We have
\begin{enumerate}[a)]
		\item $m^\prime(r)\geq0, \forall r\in (0,\infty)$, and if $\Phi\neq 0$ then $r_{-1}=0$,
		\item for every $r\in (r_{-1},\infty)$, \[m^\prime(r)=\frac {2N(r)}r m(r).\qedhere\]
	\end{enumerate}
\end{cor}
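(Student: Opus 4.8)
The plan is to extract everything from \autoref{prop m prime}, which already gives $m'(r) = 2D(r)/r$. Since $D(r) = r^{-2}\int_{B_r}(\abs{\nabla_A\Phi}^2 + 2\abs{\mu(\Phi)}^2)$ is the integral of a non-negative function, I get $m'(r) = 2D(r)/r \geq 0$ for all $r \in (0,\infty)$; in particular $m$ is non-decreasing (and continuous, being a smooth function of $r$ for $r>0$). That settles the first half of part a).

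For the claim that $\Phi \not\equiv 0$ forces $r_{-1} = 0$, I would argue by contraposition. If $m(r_0) = 0$ for some $r_0 > 0$, then by monotonicity and non-negativity $m \equiv 0$ on $(0, r_0]$, so $\int_{\partial B_r}\abs{\Phi}^2 = 0$ for every $r \in (0,r_0]$, and hence $\Phi$ vanishes identically on the non-empty open set $B_{r_0}\setminus\{0\}$. Under \autoref{hyp main 4d} the spinor satisfies the semilinear elliptic equation $\nabla_A^*\nabla_A\Phi = -\tilde\gamma(\mu(\Phi))\Phi$, namely the second line of \autoref{eq:EL4} with $\fR^+ = 0$ (and GSW monopoles satisfy \autoref{eq:EL4} as noted above); its zeroth-order term is bounded pointwise by $C\abs{\Phi}^3$, hence locally by a constant multiple of $\abs{\Phi}$, so Aronszajn's unique continuation theorem applies and forces $\Phi \equiv 0$ on the connected manifold $\mathbb R^4$. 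Equivalently, if $\Phi \not\equiv 0$ then $m(r) > 0$ for all $r > 0$, so the set whose supremum defines $r_{-1}$ is just $\{0\}$ and $r_{-1} = 0$. This unique continuation step is the only place where a genuine PDE input enters, and I expect it to be the main obstacle; the rest is bookkeeping around \autoref{prop m prime}.

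For part b), the definition of $r_{-1}$ directly guarantees $m(r) \neq 0$, hence $m(r) > 0$, for every $r \in (r_{-1}, \infty)$, so $N(r) = D(r)/m(r)$ is finite there and $D(r) = N(r) m(r)$. Substituting into \autoref{prop m prime} gives $m'(r) = 2D(r)/r = \frac{2N(r)}{r}m(r)$, which is the asserted identity.
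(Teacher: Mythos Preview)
Your proposal is correct. The paper states this corollary without proof, treating it as immediate from \autoref{prop m prime}; your derivation of $m'\geq 0$ and of part~b) is exactly the intended one-line consequence. For the claim $r_{-1}=0$ when $\Phi\not\equiv 0$, the paper offers no argument, and your unique-continuation step via Aronszajn (applied to $\nabla_A^*\nabla_A\Phi=-\tilde\gamma(\mu(\Phi))\Phi$ under \autoref{hyp main 4d}) is the right way to justify it rigorously: monotonicity alone only gives $\Phi\equiv 0$ on a ball, and one genuinely needs a PDE input to propagate this to all of $\mathbb R^4$.
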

Since $N(r)$ is the quotient of $D(r)$ and $m(r)$, we must also compute the derivative of $D(r)$. The following proposition provides the derivative:
\begin{prop}\label{propDprime4d}
For every $r>0$,
	\[D^\prime(r)=\frac 2{r^2}\int_{\partial B_r}\abs{\nabla_{A,\partial_r} \Phi}^2+\frac 12\abs{\iota(\partial_r)F_{\ad(A)}}^2+\frac 12 \abs{\mu(\Phi)}^2 -\frac 14\abs{F_{\ad(A)}}^2.\]
\end{prop}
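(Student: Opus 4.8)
The plan is to write $D(r)=r^{-2}E(r)$ with $E(r):=\int_{B_r}\bigl(|\nabla_A\Phi|^2+2|\mu(\Phi)|^2\bigr)$, so that by the coarea formula
\[ D'(r)=-\frac{2}{r^3}\,E(r)+\frac{1}{r^2}\int_{\partial B_r}\bigl(|\nabla_A\Phi|^2+2|\mu(\Phi)|^2\bigr). \]
Everything then comes down to a Pohozaev/virial identity that re-expresses $\int_{\partial B_r}|\nabla_A\Phi|^2$ in terms of the radial derivative $\nabla_{A,\partial_r}\Phi$, the curvature $F_{\ad(A)}$ and $\mu(\Phi)$, arranged so that the bulk term $-2r^{-3}E(r)$ is absorbed.

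I would obtain this by contracting the Yang--Mills--Higgs stress--energy tensor against the radial dilation field $x$ on $\mathbb R^4$ (the position vector field, which satisfies $\nabla x=g$ and $\div x=4$ on the flat metric). Concretely, under \autoref{hyp main 4d} set
\[ T_{ij}:=\langle\iota(e_i)F_{\ad(A)},\iota(e_j)F_{\ad(A)}\rangle+2\langle\nabla_{A,i}\Phi,\nabla_{A,j}\Phi\rangle-\bigl(\tfrac12|F_{\ad(A)}|^2+|\nabla_A\Phi|^2+|\mu(\Phi)|^2\bigr)g_{ij}. \]
One checks that $T$ is divergence--free whenever $(A,\Phi)$ solves \autoref{eq:EL4} (in particular for GSW monopoles): this uses the first equation, the Lichnerowicz--Weitzenb\"ock formula (\autoref{prop bochner weitzen}) for the second, the Bianchi identity, and the moment--map identities of \autoref{prop identitiy} --- in particular $\langle\tilde\gamma(\mu(\Phi))\Phi,\nabla_{A,i}\Phi\rangle=\tfrac12\nabla_i|\mu(\Phi)|^2$, a consequence of the defining property of the hyperk\"ahler moment map together with $\nabla_{\ad(A)}(\mu(\Phi))=(d\mu)_\Phi(\nabla_A\Phi)$. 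Because $\dim X=4$ the curvature contribution to $\tr T$ cancels, leaving $\tr T=-2\bigl(|\nabla_A\Phi|^2+2|\mu(\Phi)|^2\bigr)$; and on $\partial B_r$, where $\nu=\partial_r$,
\[ T(\partial_r,\partial_r)=|\iota(\partial_r)F_{\ad(A)}|^2+2|\nabla_{A,\partial_r}\Phi|^2-\tfrac12|F_{\ad(A)}|^2-|\nabla_A\Phi|^2-|\mu(\Phi)|^2. \]

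Since $\div\bigl(T(x,\cdot)\bigr)=\langle\div T,x\rangle+\langle T,\nabla x\rangle=\langle T,g\rangle=\tr T$, the divergence theorem together with $x=r\partial_r$ on $\partial B_r$ gives $\int_{B_r}\tr T=\int_{\partial B_r}T(x,\nu)=r\int_{\partial B_r}T(\partial_r,\partial_r)$; dividing by $r$ and inserting the two displays above yields
\[ \int_{\partial B_r}\bigl(|\nabla_A\Phi|^2-2|\nabla_{A,\partial_r}\Phi|^2+|\mu(\Phi)|^2-|\iota(\partial_r)F_{\ad(A)}|^2+\tfrac12|F_{\ad(A)}|^2\bigr)=\frac{2}{r}\,E(r). \]
Substituting this into the formula for $D'(r)$ above, the $E(r)$ terms cancel and one is left with exactly
\[ D'(r)=\frac{2}{r^2}\int_{\partial B_r}\bigl(|\nabla_{A,\partial_r}\Phi|^2+\tfrac12|\iota(\partial_r)F_{\ad(A)}|^2+\tfrac12|\mu(\Phi)|^2-\tfrac14|F_{\ad(A)}|^2\bigr), \]
which is the assertion.

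The main obstacle is establishing $\div T=0$: one has to keep the Bianchi--identity bookkeeping straight so that the several $\nabla|F_{\ad(A)}|^2$ contributions cancel, and one must correctly match the curvature cross--terms produced by commuting covariant derivatives on $\bS^+$ against the first--order equation --- which in turn requires checking that, under \autoref{hyp main 4d}, the curvature of $A$ acting on $\bS^+$ reduces to $\rho(F_{\ad(A)})$ with no residual auxiliary piece (flatness of $\mathbb R^4$ kills the Levi--Civita part and $\fR^+=0$). By contrast, the computation of $\tr T$ and of $T(\partial_r,\partial_r)$ in dimension four, and the algebra combining the resulting identities, are routine.
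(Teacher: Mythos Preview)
Your proposal is correct and follows essentially the same approach as the paper: your stress--energy tensor $T$ is exactly twice the tensor introduced in \autoref{def T4d}, its divergence--free property is \autoref{lem div T is 0}, and your contraction against the radial position field $x$ is (up to the factor of $2$) the paper's integration of $\langle\nabla^*T,dr^2\rangle$ over $B_r$. The ingredients you list for $\div T=0$ --- the Euler--Lagrange equations, Bianchi, the moment--map identity, and the reduction of the spinor curvature to $\rho(F_{\ad(A)})$ under \autoref{hyp main 4d} --- are precisely those used in the paper's computation.
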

To prove this proposition, we require a lemma about the divergence-free property of a certain symmetric $(0,2)$ tensor field $T$, similar to the approach by \citet[Proof of Lemma 5.2]{Taubes2012}.
\begin{definition}\label{def T4d}	
	The symmetric $(0,2)$ tensor $T$ is defined by $T:=T_1+T_{2}+T_3$ where
 \begin{align*}T_1(v,w)=\inp{\nabla_{A,v}\Phi}{\nabla_{A,w}\Phi}-\frac 12\inp{v}{w}\abs{\nabla_A \Phi}^2,\\
	2T_2(v,w)=\inp{\iota_vF_{\ad(A)}}{\iota_wF_{\ad(A)}}-\frac12 \inp{v}{w}\abs{F_{\ad(A)}}^2,\\
	T_3(v,w)=-\frac12 \inp{v}{w}\abs{\mu(\Phi)}^2. 
 \end{align*}
 Note that $\tr (T)=-\abs{\nabla_A \Phi}^2-2\abs{\mu( \Phi)}^2$.
\end{definition}
\begin{lemma}\label{lem div T is 0} The divergence of $T$ is given by:
  \[\nabla^*T=0.\]  
\end{lemma}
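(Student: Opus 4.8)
The identity $\nabla^* T = 0$ is a divergence-free ("stress-energy"/"conservation") statement, and the standard approach is to compute $\nabla^* T_i$ term-by-term at an arbitrary point $p$ using a local orthonormal frame $\{e_i\}$ that is synchronous (geodesic normal) at $p$, so that all $\nabla_{e_i} e_j$ vanish at $p$; since the Euclidean metric is flat, curvature-of-the-base terms do not appear. Concretely, I would write $(\nabla^* T)(w) = -\sum_i (\nabla_{e_i} T)(e_i, w)$ and expand each $T_r$.

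\textbf{Step 1: the $T_1$ term.} Differentiating $T_1(e_i, w) = \langle \nabla_{A,e_i}\Phi, \nabla_{A,w}\Phi\rangle - \tfrac12 \delta_{iw}\lvert\nabla_A\Phi\rvert^2$ and summing over $i$, the first piece yields $\langle \nabla_A^*\nabla_A \Phi, \nabla_{A,w}\Phi\rangle$ (from the $e_i$-contraction of the first factor, up to sign) plus a term $\sum_i \langle \nabla_{A,e_i}\Phi, \nabla_{A,e_i}\nabla_{A,w}\Phi\rangle$, and the trace piece contributes $-\sum_i\langle\nabla_{A,e_i}\Phi,\nabla_{A,w}\nabla_{A,e_i}\Phi\rangle$ after differentiating $\lvert\nabla_A\Phi\rvert^2$ in the $w$-direction. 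The difference of the last two is a curvature term $\sum_i\langle\nabla_{A,e_i}\Phi, [F_{\ad(A)}(e_i,w)]\Phi\rangle = \langle\iota_w F_{\ad(A)}, \rho^*((\nabla_A\Phi)\Phi^*)\rangle$-type expression (after using that $\nabla$ is torsion-free at $p$ to swap $\nabla_{A,e_i}\nabla_{A,w}-\nabla_{A,w}\nabla_{A,e_i}$ for the curvature of $A$). Using the Euler--Lagrange equation $\nabla_A^*\nabla_A\Phi = -\tilde\gamma(\mu(\Phi))\Phi$ (valid since $\fR^+=0$ under \autoref{hyp main 4d}), the leading term becomes $-\langle\tilde\gamma(\mu(\Phi))\Phi, \nabla_{A,w}\Phi\rangle$, which by \autoref{prop identitiy}'s defining property of $\mu$ equals $-\langle \mu(\Phi), (\nabla_{A,w}\Phi)\Phi^* \text{-type pairing}\rangle$; this will be designed to cancel against the derivative of $T_3$.

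\textbf{Step 2: the $T_2$ term.} Here I would use the second Bianchi identity $d_{\ad(A)}F_{\ad(A)}=0$ and the first Euler--Lagrange equation $d_{\ad(A)}^*F_{\ad(A)} = -2\rho^*((\nabla_A\Phi)\Phi^*)$. The standard computation for the Yang--Mills stress-energy tensor gives $\nabla^* T_2(w) = \langle \iota_w F_{\ad(A)}, d_{\ad(A)}^* F_{\ad(A)}\rangle$ (the Bianchi identity kills the would-be $dF$ term), hence $= -2\langle\iota_w F_{\ad(A)}, \rho^*((\nabla_A\Phi)\Phi^*)\rangle = -2\langle F_{\ad(A)}(\cdot,w), (\nabla_{A,\cdot}\Phi)\Phi^*\rangle$, i.e.\ $-2\sum_i\langle F_{\ad(A)}(e_i,w)\Phi, \nabla_{A,e_i}\Phi\rangle$. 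This is exactly the curvature term produced in Step 1, with the right sign to cancel it.

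\textbf{Step 3: the $T_3$ term and assembly.} Differentiating $T_3(e_i,w) = -\tfrac12\delta_{iw}\lvert\mu(\Phi)\rvert^2$ and contracting gives $\nabla^* T_3(w) = \tfrac12 \partial_w \lvert\mu(\Phi)\rvert^2 = \langle \nabla_{\ad(A),w}\mu(\Phi), \mu(\Phi)\rangle = \langle (d\mu)_\Phi(\nabla_{A,w}\Phi), \mu(\Phi)\rangle$ (using $G$-equivariance of $\mu$ so that $\nabla_{\ad(A),w}\mu(\Phi) = (d\mu)_\Phi \nabla_{A,w}\Phi$), which by the defining relation $\langle (d\mu)_\Phi\phi, v\otimes\xi\rangle = \langle\gamma(v)\rho(\xi)\Phi,\phi\rangle$ equals $\langle \tilde\gamma(\mu(\Phi))\Phi, \nabla_{A,w}\Phi\rangle$. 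Comparing with Step 1, this cancels the leading Euler--Lagrange term there. Adding the three contributions: the curvature terms from $T_1$ and $T_2$ cancel, and the $\mu$-terms from $T_1$ (via E--L) and $T_3$ cancel, leaving $\nabla^* T = 0$.

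\textbf{Main obstacle.} The bookkeeping is routine in spirit but the delicate point is Step 1: correctly tracking signs and the conventions for $\nabla^*$ (a $-\sum_i\nabla_{e_i}$ versus $+\sum_i$), making sure the curvature commutator term $[\nabla_{A,e_i},\nabla_{A,w}]\Phi = F_{\ad(A)}(e_i,w)\cdot\Phi$ appears with precisely the coefficient that matches the $-2$ from the Yang--Mills Euler--Lagrange equation in Step 2, and verifying that the pairing identities from \autoref{prop identitiy} (specifically the characterization of $d\mu$ and $\tilde\gamma$) are applied with matching normalizations — recall $\mu(\Phi)=\tfrac12\tilde\gamma^*(\Phi\Phi^*)$, so factors of $2$ must be chased carefully. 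Once the conventions are pinned down, each cancellation is forced. I would also note that flatness of $\mathbb R^4$ is what lets us ignore Ricci terms that would otherwise appear in $\nabla^* T_1$; on a general manifold one gets $\nabla^* T = (\text{Ric-type terms}) \cdot \text{quadratic in }\Phi$ instead, but that is not needed here.
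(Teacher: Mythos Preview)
Your plan is essentially the paper's own proof: compute $\nabla^* T_i$ at a point in a synchronous frame, use the second Euler--Lagrange equation to rewrite $\langle\nabla_A^*\nabla_A\Phi,\nabla_{A,w}\Phi\rangle$ in $\nabla^*T_1$, use Bianchi plus the first Euler--Lagrange equation for $\nabla^*T_2$, and observe that the $\mu$-term in $\nabla^*T_1$ cancels $\nabla^*T_3$ while the curvature term in $\nabla^*T_1$ cancels $\nabla^*T_2$. One concrete slip to watch: the paper's $T_2$ carries a built-in $\tfrac12$ (it is defined via $2T_2(v,w)=\langle\iota_vF,\iota_wF\rangle-\tfrac12\langle v,w\rangle|F|^2$), so the correct divergence is $(\nabla^*T_2)(w)=\tfrac12\langle\iota_wF_{\ad(A)},d_{\ad(A)}^*F_{\ad(A)}\rangle=-\langle\iota_wF_{\ad(A)},\rho^*((\nabla_A\Phi)\Phi^*)\rangle$, not $-2\langle\cdots\rangle$; this is exactly the single copy of the curvature term coming from $\nabla^*T_1$, so the cancellation goes through once you restore that $\tfrac12$ --- precisely the bookkeeping you flagged as the main obstacle.
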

\begin{proof}
	Let $p\in \mathbb R^4$ and $\{e_i\}$ be an oriented orthonormal frame around $p$ such that $\nabla_{e_i}e_j(p)=0$. We have
	\begin{align*}
	(\nabla^*T_1)(e_i)&=-\sum_j\inp{\nabla_j \nabla_j\Phi}{\nabla_i\Phi}+\inp{\nabla_j\Phi}{\nabla_j \nabla_i\Phi}-\inp{\nabla_j\Phi}{\nabla_i \nabla_j\Phi}\\
	&= \inp{\nabla_A^*\nabla_A\Phi}{\nabla_i\Phi}+\sum_j\inp{\nabla_j\Phi}{F_{\ad(A)}(e_i,e_j)\Phi}\\
	&=-\inp{\tilde \gamma(\mu(\Phi))\Phi }{\nabla_i\Phi}+\sum_j\inp{\nabla_j\Phi}{\rho(F_{\ad(A)}(e_i,e_j))\Phi}\\
	&=-\inp{\mu(\Phi)}{\nabla_{\ad(A),e_i}\mu(\Phi)}+\sum_j\inp{\rho^*((\nabla_j\Phi)\Phi^*)}{F_{\ad(A)}(e_i,e_j)}\\
	&=-\frac 12\nabla_i\abs{\mu(\Phi)}^2+\inp{\rho^*((\nabla_A\Phi)\Phi^*)}{\iota_{e_i}F_{\ad(A)}},
	\end{align*}
	and
		\begin{align*}
	&2(\nabla^*T_2)(e_i)\\&=-\sum_j\inp{\nabla_j\iota_{e_i}F_{\ad(A)}}{\iota_{e_j}F_{\ad(A)}}+\inp{\nabla_j\iota_{e_j}F_{\ad(A)}}{\iota_{e_i}F_{\ad(A)}}+\frac 12 \nabla_i\abs{F_{\ad(A)}}^2\\
	&=-\sum_j\inp{{e_j}\w\iota_{e_i}\nabla_jF_{\ad(A)}}{F_{\ad(A)}}+\inp{\iota_{e_j}\nabla_jF_{\ad(A)}}{\iota_{e_i}F_{\ad(A)}}+\frac 12\nabla_i\abs{F_{\ad(A)}}^2\\
	&=\sum_j\inp{\iota_{e_i}{e_j}\w\nabla_jF_{\ad(A)}}{F_{\ad(A)}}-\inp{\nabla_iF_{\ad(A)}}{F_{\ad(A)}}+\inp{d_{\ad(A)}^*F_{\ad(A)}}{\iota_{e_i}F_{\ad(A)}}+\frac 12\nabla_i\abs{F_{\ad(A)}}^2\\
	&=\sum_j\inp{\iota_{e_i}d_{\ad(A)} F_{\ad(A)}}{F_{\ad(A)}}+\inp{d_{\ad(A)}^*F_{\ad(A)}}{\iota_{e_i}F_{\ad(A)}}=-2\inp{\rho^*((\nabla_A\Phi)\Phi^*)}{\iota_{e_i}F_{\ad(A)}}.
	\end{align*}
Since $\nabla^*T_3(e_i)=\frac 12\nabla_i\abs{\mu(\Phi)}^2$,  we obtain $\nabla^*T=0$.
\end{proof}
\begin{proof}[{Proof of \autoref{propDprime4d}}]
	We have $D^\prime(r) =-\frac 2{r} D(r)+ \frac 1{r^2}\int_{\partial B_r}\abs{\nabla_A \Phi}^2+2\abs{\mu(\Phi)}^2$. Now
	\begin{align*} 
	&0=\int_{B_r}\inp{\nabla^*T}{dr^2}\\
	&=-2r\int_{\partial B_r}T(\partial_r,\partial_r)+\int_{B_r}2\tr(T)\\
	&=-2r\int_{\partial B_r}\abs{\nabla_{A,\partial_r} \Phi}^2+\frac 12\abs{\iota(\partial_r)F_{\ad(A)}}^2-\frac 14\abs{F_{\ad(A)}}^2\\
	&+r\int_{\partial B_r}\abs{\nabla_A \Phi}^2+\abs{\mu(\Phi)}^2+\int_{B_r}2\tr(T)\\
	&=-2r\int_{\partial B_r}\abs{\nabla_{A,\partial_r} \Phi}^2+\frac 12\abs{\iota(\partial_r)F_{\ad(A)}}^2+\frac 12 \abs{\mu(\Phi)}^2-\frac 14\abs{F_{\ad(A)}}^2+r^3D^\prime(r).\qedhere
	\end{align*}
\end{proof}
We are now prepared to present the final formula for the derivative of the frequency function  $N(r)$ by combining the results from the two previous propositions.\begin{prop}\label{prop N prime }For all $r> r_{-1}$ we have
$$N^\prime(r)=\frac 2{r^2m(r)}\int_{\partial B_r}\abs{\nabla_{A,\partial_r} \Phi-\frac 1r N(r)\Phi}^2+\frac 12\abs{\iota(\partial_r)F_{\ad(A)}}^2+\frac 12 \abs{\mu(\Phi)}^2 -\frac 14\abs{F_{\ad(A)}}^2.$$
	\end{prop}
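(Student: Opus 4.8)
The plan is to differentiate $N(r) = D(r)/m(r)$ using the quotient rule and the two preceding propositions, then recognize the result as a perfect square (up to curvature terms). Concretely, I would write
\[
N'(r) = \frac{D'(r)m(r) - D(r)m'(r)}{m(r)^2} = \frac{D'(r)}{m(r)} - \frac{D(r)}{m(r)}\cdot\frac{m'(r)}{m(r)}.
\]
By \autoref{prop m prime}, $m'(r) = 2D(r)/r = 2N(r)m(r)/r$, so the second term becomes $\frac{2N(r)}{r}\cdot\frac{D(r)}{m(r)} = \frac{2N(r)^2}{r}m(r)/m(r)$; more usefully, $\frac{D(r)}{m(r)}\cdot\frac{m'(r)}{m(r)} = \frac{2N(r)}{r}\cdot\frac{D(r)}{m(r)} = \frac{2N(r)^2}{r}$. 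Hmm — let me instead keep it as $\frac{D(r)m'(r)}{m(r)^2}$ and substitute $m'(r) = \frac{2}{r}D(r)$, so this term equals $\frac{2 D(r)^2}{r\, m(r)^2}$.

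Next I would substitute the formula for $D'(r)$ from \autoref{propDprime4d}. This gives
\[
N'(r) = \frac{1}{m(r)}\left[\frac{2}{r^2}\int_{\partial B_r}\abs{\nabla_{A,\partial_r}\Phi}^2 + \frac12\abs{\iota(\partial_r)F_{\ad(A)}}^2 + \frac12\abs{\mu(\Phi)}^2 - \frac14\abs{F_{\ad(A)}}^2\right] - \frac{2 D(r)^2}{r\,m(r)^2}.
\]
The key algebraic step is to absorb the last term into the boundary integral of $\abs{\nabla_{A,\partial_r}\Phi}^2$ by completing the square, producing $\int_{\partial B_r}\abs{\nabla_{A,\partial_r}\Phi - \frac{1}{r}N(r)\Phi}^2$. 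Expanding the target,
\[
\int_{\partial B_r}\Big|\nabla_{A,\partial_r}\Phi - \tfrac{N(r)}{r}\Phi\Big|^2 = \int_{\partial B_r}\abs{\nabla_{A,\partial_r}\Phi}^2 - \frac{2N(r)}{r}\int_{\partial B_r}\inp{\nabla_{A,\partial_r}\Phi}{\Phi} + \frac{N(r)^2}{r^2}\int_{\partial B_r}\abs{\Phi}^2.
\]
The crucial identity I need is that $\int_{\partial B_r}\inp{\nabla_{A,\partial_r}\Phi}{\Phi} = \frac12\int_{\partial B_r}\partial_r\abs{\Phi}^2 = \frac12\frac{d}{dr}\int_{\partial B_r}\abs{\Phi}^2 - \frac{3}{2r}\int_{\partial B_r}\abs{\Phi}^2$, and then by the computation inside the proof of \autoref{prop m prime} (which uses the Bochner identity \autoref{eq bochner 4 cor} and integration by parts, along with $\fR^+ = 0$ from \autoref{hyp main 4d}), this equals $\int_{B_r}\abs{\nabla_A\Phi}^2 + 2\abs{\mu(\Phi)}^2 = r^2 D(r)$. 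Therefore $\int_{\partial B_r}\inp{\nabla_{A,\partial_r}\Phi}{\Phi} = r^2 D(r)$, and using $\int_{\partial B_r}\abs{\Phi}^2 = r^3 m(r)$ and $D(r) = N(r)m(r)$, the cross term is $-\frac{2N(r)}{r}\cdot r^2 D(r) = -2r N(r) D(r) = -2r N(r)^2 m(r)$ and the last term is $\frac{N(r)^2}{r^2}\cdot r^3 m(r) = r N(r)^2 m(r)$, so together they contribute $-r N(r)^2 m(r) = -r D(r)^2/m(r)$. Multiplying the square by $\frac{2}{r^2 m(r)}$ reproduces exactly the $-\frac{2D(r)^2}{r\,m(r)^2}$ term, confirming the identity.

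The main obstacle — really the only nontrivial point — is establishing the identity $\int_{\partial B_r}\inp{\nabla_{A,\partial_r}\Phi}{\Phi} = \int_{B_r}\abs{\nabla_A\Phi}^2 + 2\abs{\mu(\Phi)}^2$; this is precisely the integration-by-parts computation already carried out inside the proof of \autoref{prop m prime} (applying $\frac12\Delta\abs{\Phi}^2 = \inp{\nabla_A^*\nabla_A\Phi}{\Phi} - \abs{\nabla_A\Phi}^2$ together with \autoref{eq bochner 4 cor} under \autoref{hyp main 4d}), so I would simply invoke it. After that, the proof is pure bookkeeping: substitute, complete the square, and collect terms. I should also note for $r > r_{-1}$ that $m(r) > 0$ so all divisions are legitimate.
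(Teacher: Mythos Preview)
Your approach is correct and essentially identical to the paper's: apply the quotient rule, substitute $m'(r)=\tfrac{2}{r}D(r)$ and the formula for $D'(r)$ from \autoref{propDprime4d}, then complete the square using the identity $D(r)=\tfrac{1}{r^2}\int_{\partial B_r}\inp{\nabla_{A,\partial_r}\Phi}{\Phi}$ (which the paper simply states at the outset, while you rederive it from the computation inside \autoref{prop m prime}). The only cosmetic difference is that the paper writes the subtracted term as $-\tfrac{2}{r}N(r)^2$ rather than $-\tfrac{2D(r)^2}{r\,m(r)^2}$, which is of course the same thing.
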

\begin{proof}
	Since	$D(r)=\frac 1{r^2}\int_{\partial B_r}\inp{\nabla_{A,\partial r}\Phi}{\Phi}$,  
	\begin{align*}
	N^\prime(r)&=\frac {D^\prime(r)}{m(r)}-D(r) \frac {m^\prime(r)}{m(r)^2}\\
	&=\frac 2{r^2m(r)}\int_{\partial B_r}\abs{\nabla_{A,\partial_r} \Phi}^2+\frac 12\abs{\iota(\partial_r)F_{\ad(A)}}^2+\frac 12 \abs{\mu(\Phi)}^2 -\frac 14\abs{F_{\ad(A)}}^2 -\frac 2r N(r)^2\\
	&=\frac 2{r^2m(r)}\int_{\partial B_r}\abs{\nabla_{A,\partial_r} \Phi-\frac 1r N(r)\Phi}^2+\frac 12\abs{\iota(\partial_r)F_{\ad(A)}}^2+\frac 12 \abs{\mu(\Phi)}^2 -\frac 14\abs{F_{\ad(A)}}^2.\qedhere
	\end{align*}
\end{proof}
\begin{remark}
\label{remark almostmonotonicity}
From the above proposition, it is evident that $N(r)$ may not exhibit monotonicity. However, if $(A, \Phi)$ is a solution to the generalized Seiberg--Witten equations \autoref{eq GSW 4 d formal}, then $ F^+_{\ad(A)}=\mu(\Phi)$. In this case, by using \autoref{prop N prime }, we obtain the inequality
\begin{equation}\label{eqnalmostmonotonesw}N^\prime(r)+ \frac 1{r^2m(r)}\int_{\partial B_r}\abs{F^-_{\ad(A)}}^2 \geq 0.\end{equation}
 On the other hand, if $(A, \Phi)$ satisfies only the Euler--Lagrange equations \autoref{eq:EL4} then we obtain instead the inequality
\begin{equation}\label{eqnalmostmonotoneEL}N^\prime(r)+ \frac 1{2r^2m(r)}\int_{\partial B_r}\abs{F_{\ad(A)}}^2 \geq 0.\end{equation}
As a result of this, we modify the frequency function $N(r)$ in the following proof, ensuring that it exhibits the necessary monotonicity, provided we are given the assumptions in  \autoref{growththeorem4}~\autoref{growththeorem4_part1}. 
\end{remark}
\begin{proof}[{{\normalfont{\textbf{Proof of \autoref{growththeorem4}~\autoref{growththeorem4_part1}}}}}]
	Assume $\Phi \neq0$. By \autoref{cor m prime}, $r_{-1}=0$. Evidently, $N(r)=0 \ \forall r>0$ if and only if $\nabla_A\Phi=0$ and $\mu(\Phi)=0$, or equivalently, by \autoref{Bochner1}, $\abs{\Phi}$ is constant.  
	
Therefore we can assume $N\neq0$. Given the assumptions in  \autoref{growththeorem4}~\autoref{growththeorem4_part1}, the inequalities \autoref{eqnalmostmonotonesw} and \autoref{eqnalmostmonotoneEL} in \autoref{remark almostmonotonicity}, ensure that for every $c>0$ there exists $\rho>0$ such that 
\begin{equation}\label{eq Nc}N^\prime(r)+\frac {2c}{r^{3}m(\rho)}\geq0, \forall r\geq\rho.\end{equation}
Define the modified frequency function: 
\[\widetilde N_c(r):=N(r)-\frac c{m(\rho)r^{2}}, \quad \forall r\geq\rho.\] 
It follows that ${\widetilde N_c(r)}^\prime\geq 0, \forall r\geq\rho$, which gives the desired almost monotonicity property. We claim that $\widetilde N_c$ controls $m$. To see this, observe that for all $r\geq\rho$,
\[m^\prime(r)=\Big(\frac {2\widetilde N_c(r)}r+\frac {2c}{m(\rho)r^{3}} \Big) m(r).\]
For $\rho\leq s< r<\infty$ and  $t\in[s,r]$ we have
	$$\frac {2\widetilde N_c(s)}t+\frac {2c}{m(\rho)t^{3}}\leq\frac{d}{dt}\log(m(t))\leq\frac {2\widetilde N_c(r)}t+\frac {2c}{m(\rho)t^{3}},$$
and therefore,
\begin{equation}\label{monotone1}
	\Big(\frac rs\Big)^{2\widetilde N_c(s)}\Big(e^{\int_s^r\frac {2c}{m(\rho)t^{3}} dt}\Big)m(s)\leq m(r)\leq \Big(\frac rs\Big)^{2\widetilde N_c(r)}\Big(e^{\int_s^r\frac {2c}{m(\rho)t^{3}} dt}\Big)m(s).
	\end{equation}	
From this estimate we can conclude the growth of $m(r)$ as follows:	

	\textbf{Case 1:}  There exists $c>0$ such that $\widetilde N_c(s)>0$ for some $s\geq\rho$. Set $\varepsilon:=2\widetilde N_c(s)>0$. The above estimate \autoref{monotone1} yields 
	$$m(r)\geq r^\varepsilon \frac{m(s)}{s^\varepsilon}\Big(e^{\int_s^r\frac {2c}{m(\rho)t^{3}} dt}\Big).$$
	Thus $$\liminf_{r\to\infty}\frac 1{r^{\varepsilon}} m(r)\gtrsim \frac{m(s)}{s^\varepsilon}>0.$$
		
 \textbf{Case 2:} There exist a decreasing sequence $\{c_n\}$ converging to $0$ and an increasing sequence $\{s_n\}$ of positive real numbers converging to $\infty$ as $n\to \infty$ such that $\widetilde N_{c_n}(s_n)\leq 0$. That is
 $$N(s_n)\leq\frac {c_n}{m(s_n)s_n^{2}} \  \text{and hence} \ s_n^2 D(s_n)\leq c_n \to0 \  \text{as} \ n\to\infty,$$which is a contradiction as $N\neq 0$. 
\end{proof}

\subsection{Consequence of finite energy and the proof of \autoref{growththeorem4} (\ref{growththeorem4_part2}) }\label{finite_energy4}In this section, we also assume \autoref{hyp main 4d}. Let $(A,\Phi)$ be a solution to the generalized Seiberg--Witten equations \autoref{eq GSW 4 d formal}, or more generally, to the Euler--Lagrange equations \autoref{eq:EL4} associated with the Yang--Mills--Higgs energy functional $\sE_4$. We will show that if $\sE_4(A,\Phi)$ is finite, then $\abs{\Phi}$ must converge to a non-negative constant $m$ at infinity. The key idea is to apply Heinz trick ($\varepsilon$-regularity) from \autoref{Heinz} to the energy density $e(A,\Phi)$, which is the integrand in the YMH energy functional $\sE_4$. The proof draws on several arguments of similar nature found in \cites{Nagy2019,Fadel2022}.
\begin{definition}
 The \textbf{energy density function} $e: \sA(\fs,B) \times \Gamma(\mathbf S^+)\to C^\infty(\mathbb R^4,\mathbb R)$ is defined by
	\[e(A,\Phi)=
	|F_{\ad(A)}|^2+|\nabla_A\Phi|^2+|\mu(\Phi)|^2.\qedhere\]
\end{definition}
\begin{lemma}\label{estimate1}
Let $(A, \Phi)$ be a solution to the generalized Seiberg--Witten equations \autoref{eq GSW 4 d formal}, or more generally, to the Euler--Lagrange equations \autoref{eq:EL4} associated with the Yang--Mills--Higgs energy functional $\sE_4$. Then
	$$\Delta e(A,\Phi) \lesssim e(A,\Phi)+e(A,\Phi)^{\frac 32}$$
\end{lemma}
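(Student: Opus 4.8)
The plan is to establish the differential inequality by computing the Laplacian of each of the three terms in $e(A,\Phi)$ separately using Bochner–Weitzenböck type formulas, and then bounding all the resulting cubic and quadratic cross-terms pointwise by $e(A,\Phi)$ and $e(A,\Phi)^{3/2}$. The three pieces are $|F_{\ad(A)}|^2$, $|\nabla_A\Phi|^2$, and $|\mu(\Phi)|^2$. For each, I would write $\tfrac12\Delta|s|^2 = \langle \nabla^*\nabla s, s\rangle - |\nabla s|^2$ and then substitute the relevant second-order equation: for the curvature term, the second Bianchi identity together with $d^*_{\ad(A)}F_{\ad(A)} = -2\rho^*((\nabla_A\Phi)\Phi^*)$ from \autoref{eq:EL4} yields a Weitzenböck formula $\nabla^*_A\nabla_A F_{\ad(A)} = \{\text{quadratic in } F\} + \{\text{first derivative of } \rho^*((\nabla_A\Phi)\Phi^*)\}$; for the spinor term, differentiate the identity $\nabla_A^*\nabla_A\Phi = -\tilde\gamma(\mu(\Phi))\Phi - \fR^+\Phi$ (which, under \autoref{hyp main 4d}, is just $\nabla_A^*\nabla_A\Phi = -\tilde\gamma(\mu(\Phi))\Phi$); for the moment-map term, use that $\mu$ is a fixed quadratic polynomial in $\Phi$, so $\nabla_{\ad(A)}\mu(\Phi)$ is controlled by $|\Phi||\nabla_A\Phi|$ and the second covariant derivative brings in $|\nabla_A\Phi|^2 + |\Phi||\nabla_A^*\nabla_A\Phi|$, the latter again reduced via the Bochner identity.

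Concretely, I would organize the estimate as follows. First, from $\tfrac12\Delta|\nabla_A\Phi|^2 = \langle\nabla_A^*\nabla_A\nabla_A\Phi,\nabla_A\Phi\rangle - |\nabla_A^2\Phi|^2$, commute derivatives to produce curvature terms $\langle F_{\ad(A)}\cdot\nabla_A\Phi, \nabla_A\Phi\rangle \lesssim |F_{\ad(A)}||\nabla_A\Phi|^2$ (a term bounded by $e^{3/2}$) plus the contribution of $\nabla_A(\tilde\gamma(\mu(\Phi))\Phi)$, which expands to terms like $|\nabla_{\ad(A)}\mu(\Phi)||\Phi||\nabla_A\Phi| + |\mu(\Phi)||\nabla_A\Phi|^2 \lesssim |\Phi|^2|\nabla_A\Phi|^2 + |\mu(\Phi)||\nabla_A\Phi|^2$; here I would use the pointwise bound $|\mu(\Phi)| \lesssim |\Phi|^2$, so $|\Phi|^2|\nabla_A\Phi|^2 \lesssim |\mu(\Phi)|\,|\nabla_A\Phi|^2 + |\nabla_A\Phi|^2$ is not quite right — instead note $|\Phi|^2 |\nabla_A\Phi|^2 \le \big(|\Phi|^4 + |\nabla_A\Phi|^4\big) \lesssim |\mu(\Phi)|^2 + |\nabla_A\Phi|^2\cdot|\nabla_A\Phi|^2 \lesssim e^2$, which is too weak. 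The correct move is to use Kato's inequality and the fact that $|\Phi|$ itself satisfies a good equation: from \autoref{eq bochner 4 cor}, $|\Phi|\,|\nabla_A\Phi|^2 \lesssim |\Phi|(\tfrac12\Delta|\Phi|^2)$ is controlled, but more simply one bounds $|\Phi|^2|\nabla_A\Phi|^2 \le \tfrac12(|\Phi|^2)^{3/2}\cdot$ ... — in practice one writes all cubic terms in the schematic form (quadratic curvature/derivative quantity)$\times$(linear quantity), applies Young's inequality to split into pure quadratics (bounded by $e$) and pure cubics (bounded by $e^{3/2}$), using throughout $|\mu(\Phi)|\lesssim|\Phi|^2$ and $|\tilde\gamma(\mu(\Phi))|\lesssim|\mu(\Phi)|$.

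I expect the main obstacle to be bookkeeping rather than any single hard estimate: one must carefully track that the term $\langle \nabla_A(\tilde\gamma(\mu(\Phi))\Phi), \nabla_A\Phi\rangle$ and the term coming from differentiating $\rho^*((\nabla_A\Phi)\Phi^*)$ in the curvature Weitzenböck formula both produce only (i) genuinely cubic expressions of the form $|F|\cdot e$ or $|\Phi|\cdot|\nabla_A\Phi|\cdot|\nabla_A^2\Phi|$ and (ii) quadratic expressions, with no uncontrolled term involving $\nabla_A^2\Phi$ that cannot be absorbed into the $-|\nabla_A^2\Phi|^2$ (respectively $-|\nabla_A F_{\ad(A)}|^2$) coming from the good term $-|\nabla s|^2$ in $\tfrac12\Delta|s|^2 = \langle\nabla^*\nabla s,s\rangle - |\nabla s|^2$. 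That absorption — completing the square to kill the second-derivative cross terms at the expense of the negative definite terms, which we then simply discard since we only want an upper bound on $\Delta e$ — is the one genuinely delicate point; everything else is Young's inequality and the elementary bound $|\mu(\Phi)|\le c|\Phi|^2$. Since $\mathbb R^4$ is flat and $\fR^+ = 0$ under \autoref{hyp main 4d}, no curvature-of-the-base terms appear, which streamlines the computation considerably.
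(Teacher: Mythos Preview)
Your overall plan---compute $\tfrac12\Delta$ of each of the three summands via Bochner--Weitzenb\"ock and bound the result---matches the paper. But there is a real gap in the way you propose to handle the cross terms, and it is exactly the place where you yourself hesitate.

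The problematic term is $\langle\nabla_A(\tilde\gamma(\mu(\Phi))\Phi),\nabla_A\Phi\rangle$. You bound it by $|\nabla_{\ad(A)}\mu(\Phi)|\,|\Phi|\,|\nabla_A\Phi| + |\mu(\Phi)|\,|\nabla_A\Phi|^2$ and then by $|\Phi|^2|\nabla_A\Phi|^2 + |\mu(\Phi)|\,|\nabla_A\Phi|^2$. The quantity $|\Phi|^2|\nabla_A\Phi|^2$ is \emph{not} controlled by $e+e^{3/2}$: the energy density contains $|\mu(\Phi)|^2$, not $|\Phi|^2$, and the inequality $|\mu(\Phi)|\lesssim|\Phi|^2$ goes the wrong way (think of $\Phi$ lying in $\mu^{-1}(0)$). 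No amount of Young's inequality fixes this. The paper avoids the issue by computing the inner product \emph{exactly} using the moment-map identity: since $\nabla_{\ad(A)}\mu(\Phi)=2\mu(\nabla_A\Phi,\Phi)$ and $\langle\tilde\gamma(\alpha)\Phi,\psi\rangle=\langle\alpha,2\mu(\psi,\Phi)\rangle$, one gets
\[
-\langle\nabla_A(\tilde\gamma(\mu(\Phi))\Phi),\nabla_A\Phi\rangle \lesssim -2|\mu(\nabla_A\Phi,\Phi)|^2 + |\mu(\Phi)|\,|\nabla_A\Phi|^2,
\]
the first term being negative and the second bounded by $e^{3/2}$. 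The same mechanism handles the curvature piece: $\langle -2d_{\ad(A)}\rho^*((\nabla_A\Phi)\Phi^*),F_{\ad(A)}\rangle$ produces, after antisymmetrizing $\nabla_A^2\Phi$ into $\rho(F_{\ad(A)})\Phi$, the negative square $-2|\rho(F_{\ad(A)})\Phi|^2$ plus $|\nabla_A\Phi|^2|F_{\ad(A)}|$.

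This also means your ``one genuinely delicate point''---absorbing second-derivative cross terms like $|\Phi|\,|\nabla_A\Phi|\,|\nabla_A^2\Phi|$ into $-|\nabla_A^2\Phi|^2$---is a phantom. No such terms arise: in the curvature computation the exterior derivative $d_{\ad(A)}$ antisymmetrizes second derivatives of $\Phi$ into curvature, and in the spinor computation the commutator $[\nabla_A^*\nabla_A,\nabla_A]$ is algebraic in $F_{\ad(A)}$ and $d^*F_{\ad(A)}$. The paper simply drops the good terms $-|\nabla s|^2$ at the outset via $\tfrac12\Delta|s|^2\le\langle\nabla^*\nabla s,s\rangle$; no absorption is needed. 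The actual delicate point is recognizing the exact negative squares $-|\mu(\nabla_A\Phi,\Phi)|^2$, $-|\rho(F_{\ad(A)})\Phi|^2$, $-|\rho^*((\nabla_A\Phi)\Phi^*)|^2$, which is what prevents stray factors of $|\Phi|$ from appearing.
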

\begin{proof}
	In the following computations, we are going to use either Lichenerowicz--Weitzenb\"ock formula for Lie-algebra bundle valued $2$-forms, or the Euler--Lagrange equations \autoref{eq:EL4}, or \autoref{prop identitiy}.
	
	\begin{align*}
	\frac 12\Delta \abs{F_{\ad(A)}}^2
	&\leq \inp{\nabla_{\ad(A)}^*\nabla_{\ad(A)} F_{\ad(A)}}{F_{\ad(A)}}\\
	&\lesssim \inp{\Delta_{\ad(A)} F_{\ad(A)}}{F_{\ad(A)}}+\abs{F_{\ad(A)}}^2+\abs{F_{\ad(A)}}^3\\
	&=\inp{-2d_{\ad(A)}\rho^*((\nabla_A\Phi)\Phi^*)}{F_{\ad(A)}}+\abs{F_{\ad(A)}}^2+\abs{F_{\ad(A)}}^3\\
	&=2\inp{-\rho^*((\rho(F_{\ad(A)})\Phi)\Phi^*)-\rho^*(\nabla_A\Phi \w(\nabla_A\Phi)^*)}{F_{\ad(A)}}+\abs{F_{\ad(A)}}^2+\abs{F_{\ad(A)}}^3 \\
	&\lesssim-2\abs{\rho(F_{\ad(A)})\Phi}^2+\abs{\nabla_A\Phi}^2\abs{F_{\ad(A)}}+\abs{F_{\ad(A)}}^2+\abs{F_{\ad(A)}}^3\\
	&\lesssim e(A,\Phi)+e(A,\Phi)^{\frac 32},
	\end{align*}
		\begin{align*}
	\frac 12\Delta|\nabla_A\Phi|^2
	&\leq \inp{\nabla_A^*\nabla_A\nabla_A\Phi}{\nabla_A\Phi}\\
	&=\inp{[\nabla_A^*\nabla_A,\nabla_A]\Phi}{\nabla_A\Phi}+\inp{\nabla_A\nabla_A^*\nabla_A\Phi}{\nabla_A\Phi} \\
	&\lesssim \inp{\rho(d_{\ad(A)}^*F_{\ad(A)})\Phi}{\nabla_A\Phi}+\abs{F_{\ad(A)}}\abs{\nabla_A\Phi}^2-\inp{\nabla_A(\tilde \gamma(\mu(\Phi))\Phi)}{\nabla_A\Phi} \\
	&\lesssim-2\abs{\rho^*(\nabla_A\Phi)\Phi^*}^2+\abs{F_{\ad(A)}}\abs{\nabla_A\Phi}^2 - 2\abs{\mu(\nabla_A\Phi,\Phi)}^2+\abs{\mu(\Phi)}\abs{\nabla_A\Phi}^2\\
 &\lesssim e(A,\Phi)+e(A,\Phi)^{\frac 32},
	\end{align*}	
		\begin{align*}
	\frac 12\Delta|\mu(\Phi)|^2
	\leq \inp{\nabla_{\ad(A)}^*\nabla_{\ad(A)}\mu(\Phi)}{\mu(\Phi)}
	&=2\inp{\mu(\nabla_A^*\nabla_A\Phi,\Phi)-\langle \mu(\nabla_A\Phi,\nabla_A\Phi)\rangle}{\mu(\Phi)}\\
	&=2\inp{-\mu(\tilde \gamma(\mu(\Phi))\Phi,\Phi)-\langle\mu(\nabla_A\Phi,\nabla_A\Phi)\rangle}{\mu(\Phi)}\\
	&\lesssim \abs{\mu(\Phi)}^3+\abs{\nabla_A\Phi}^2\abs{\mu(\Phi)}\lesssim e(A,\Phi)^{\frac 32}.
	\qedhere\end{align*}
	\end{proof}

\begin{proof}[{{\normalfont{\textbf{Proof of \autoref{growththeorem4}~\autoref{growththeorem4_part2}}}}}] The Yang--Mills--Higgs energy $\sE_4(A,\Phi)$ is finite implies that the energy density $e(A,\Phi)$ is in $L^1( \mathbb R^4)$.
This together with the estimate in \autoref{estimate1}, satisfied by $e(A,\Phi)$, allows us to apply Heinz trick from \autoref{Heinz} by taking $f=e(A,\Phi)$. 

We aim to show that for any $\alpha\in (0,1)$, 
\begin{equation}\label{eq r alpha4d}\abs{\Phi}=O(r^{\alpha})\ \ \text{as}\ \  r=\abs{x}\to \infty.\end{equation}
By \autoref{Bochner1}, $\abs{\Phi}^2$ is subharmonic. Therefore,  there exists a point $x_0$ on $\partial B_r(0)$ such that
 \[M\coloneqq \abs{\Phi(x_0)}^2=\sup_{x\in B_r(0)} \abs{\Phi(x)}^2.\] 
Applying the inequality governing the Sobolev embedding $W^{1, \frac 4{1-\alpha}} (\mathbb R^4)\hookrightarrow C^{0,\alpha}(\mathbb R^4)$ (a consequence of Morrey's inequality), and then using Kato's inequality, we obtain
	\begin{align*}\abs{\Phi(x_0)}^2-\abs{\Phi(0)}^2  \lesssim r^{\alpha}  \|\nabla \abs{\Phi}^2\|_{L^{\frac{4}{1-\alpha}}\big(B_r(0)\big)} &\lesssim r^{\alpha} \sqrt M  \norm{\nabla_A\Phi}_{L^{\frac{4}{1-\alpha}}(B_r(0))}\\
	&\lesssim r^{\alpha} \sqrt M  \norm{e(A,\Phi)}^{1/2}_{L^{\frac 2{1-\alpha}}(\mathbb R^4)}.\end{align*}
By \autoref{Heinz}~\autoref{Heinz_part2} with $f=e(A,\Phi)$ and by Young's inequality with any $\delta>0$ satisfying $\delta \|e\|_{L^1{(\mathbb R^4)}}<1$, we obtain
 \[ r^{\alpha} \sqrt M  \norm{e(A,\Phi)}^{1/2}_{L^{\frac 2{1-\alpha}}(\mathbb R^4)}\lesssim \delta^{-1} r^{2\alpha}+ \delta M \norm{e(A,\Phi)}_{L^1{(\mathbb R^4)}}.\]
 Hence, $$M\lesssim \abs{\Phi(0)}^2+ r^{2\alpha}.$$ This proves the equation \autoref{eq r alpha4d}.

Let $G$ be the Green's kernel on $\mathbb R^4$. Set,
  $$\psi(x):=-\int_{\mathbb R^4}G(x,\cdot )\Delta \abs{\Phi}^2=2\int_{\mathbb R^4}G(x,\cdot) \big(\abs{\nabla_A\Phi}^2+2\abs {\mu(\Phi) }^2\big), \quad x\in \mathbb R^4.$$  
Then $\psi(x)$ exists and $\psi:\mathbb R^4\to [0,\infty)$ is a smooth function satisfying
\[\Delta \psi= 2\abs{\nabla_A\Phi}^2+4\abs {\mu(\Phi) }^2, \quad \psi=o(1)\ \ \text{as}\ \  r=\abs{x}\to \infty.\]
 The proof can be found in \cite[Lemma 2.10]{Fadel2022}. For clarity, we note the correspondence of notation used therein:
\[ n=4, \quad X=\mathbb R^4, \quad f=2\abs{\nabla_A\Phi}^2+4\abs {\mu(\Phi) }^2\in L^1(\mathbb R^4)\cap L^3(\mathbb R^4)\cap C^\infty(\mathbb R^4). \]

Since $\abs{\Phi}^2+\psi$ is harmonic and $\abs{\Phi}^2+\psi\geq0$, by the gradient estimate for harmonic functions and by the equation \autoref{eq r alpha4d}, we obtain that $\abs{\Phi}^2+\psi$ is constant, say $m$. This finishes the proof.
\end{proof}

\section{Generalized Seiberg--Witten Bogomolny equations in dimension three}\label{section GSWB monopole}
Our objective is to establish analogous \autoref{growththeorem3} for generalized Seiberg--Witten (GSW) Bogomolny monopoles in three dimensions to those previously obtained for GSW monopoles in four dimensions. To that end, we begin by laying the necessary groundwork on the generalized Seiberg--Witten Bogomolny equations in dimension three. This includes again introducing the fundamental setup, clarifying the relevant notations, and deriving several key identities that will play a crucial role in the arguments to follow.
\subsection{Preliminaries: basic set up and identities}
We review the generalized Seiberg--Witten (GSW) Bogomolny equations in dimension three. All the constructions are similar to the GSW equations in dimension four as described in \autoref{sec GSW 4d}.  

Choose an algebraic data $(H,\rho,G)$ as in in \autoref{sec GSW 4d}. 
Set $$\Spin^H(3):= \frac{\Sp(1)\times H}{\{\pm 1\}}.$$ The group $\Sp(1)$ acts on $\mathbb R^3\cong \Im\mathbb H$ by $p\cdot x=p x\bar{p}$ and yields a $2$-fold covering $\Sp(1)\to \SO(3)$ and therefore $\Spin(3)=\Sp(1)$. 
\begin{definition}
	A \textbf{$\Spin^H$-structure} on an oriented Riemannian $3$-manifold $(M,g)$ is a principal $\Spin^H(3)$-bundle $\mathfrak s$ together with an isomorphism  \[\mathfrak s \times_{\Spin^H(3)} \SO(3)\cong \SO(TM).\qedhere\]
\end{definition}

Choose an algebraic data $(H,\rho,G)$. A $\Spin^H$-structure $\mathfrak s$ induces the following associated bundles and maps:
\begin{itemize}
	\item The \textbf{spinor bundle}, $$\bS=\mathfrak s \times_{\Spin^H(3)}S,$$
 \item The \textbf{adjoint bundle} and the \textbf{auxiliary bundle}, respectively, 
 $$\ad(\mathfrak s):=\mathfrak s \times_{\Spin^H(3)} \mathfrak g \qandq \sK:=\mathfrak s \times_{\Spin^H(3)} K,$$
	\item The \textbf{Clifford multiplication map} $\gamma: TM \to \End(\mathbf S,\mathbf S)$ induced by $\gamma$,
 \item $\tilde \gamma:TM\tn \ad(\fs)\to \End(S,S)$ is induced by $\tilde \gamma$,
     \item The \textbf{moment map} $\mu:\mathbf S \to \Lambda^2T^*M\otimes \ad(\fs),$ defined by
	 $$\mu(\Phi):=\frac 12\tilde \gamma^*(\Phi \Phi^*).$$  
\end{itemize}

\begin{definition}
	A \textbf{geometric data} is a tuple $(M,g,\fs,B)$ where
	$\mathfrak s$ is a $\Spin^H$-structure  on an oriented Riemannian $3$-manifold $(M,g)$ and $B$ is a connection on the auxiliary bundle $\sK$.
\end{definition}

Choose a geometric data $(M,g,\fs,B)$. Denote by $\sA(\fs,B)$ the space of all connections on $\fs$ inducing the Levi-Civita connection on $TM$ and the connection $B$ on the auxiliary bundle $\sK$. For $A\in \sA(\fs,B)$ we denote the induced connection on $\ad(\fs)$ by $\ad(A)$. Note that $\sA(\fs,B)$ is nonempty and is an affine space over $\Omega^1(M,\ad(\fs))$. Every $A\in \sA(\fs,B)$ defines a \textbf{Dirac operator} 
$\slD_A: \Gamma(\mathbf S)\to \Gamma(\mathbf S)$ which is given by 
$$ \slD_A\Phi=\sum_{i=1}^{3} \gamma(e_i)\nabla_{A,{e_i}}\Phi,$$
	where $ \{ e_1,e_2,e_3\}$ is an oriented local orthonormal frame of $TM$.

\begin{definition}
	The \textbf{generalized Seiberg--Witten (GSW) Bogomolny equations in dimension three} associated with the datas $(H,\rho,G)$ and $(M,g,\fs,B)$ are the following equations: for $A\in\sA(\fs,B) $, $\xi\in \Omega^0(M,\ad(\fs))$, $\Phi \in  \Gamma(\mathbf S)$,
	\begin{equation} \label{GSWb eq}
	\slD_A\Phi=-\rho(\xi)\Phi,\ \ \ 
	F_{\ad(A)}=*d_{\ad(A)}\xi+\mu(\Phi).  
	\end{equation}
 Solutions of \autoref{GSWb eq} are said to be \textbf{generalized Seiberg--Witten (GSW) Bogomolny monopoles}. With $\xi=0$, \autoref{GSWb eq} is called \textbf{generalized Seiberg--Witten (GSW) equations} and the solutions are called \textbf{generalized Seiberg--Witten (GSW) monopoles}.
\end{definition}
\begin{remark}
 Choose an algebraic data $(H,\rho,G)$ and a geometric data $(M,g,\fs,B)$ in dimension three. We consider the four manifold $X:=\mathbb R \times M$ with the cylindrical metric $dt^2+ g$. Let $\pi:\mathbb R \times M\to M$ be  the standard projection onto $M$. The $\Spin^H$-structure $\mathfrak s$ on $M$ will induce a $\Spin^H$-structure on $X$, again call it by $\mathfrak s$, under the inclusion $\Sp(1)\hookrightarrow \Sp(1)\times \Sp(1)$ defined by $x\to(x,x)$, and subsequently positive/negative spinor bundles $\mathbf S^\pm$. Auxiliary bundle on $X$ is the pull back of the auxiliary bundle on $M$ and we take the connection on the auxiliary bundle is the pullback connection of $B$. Both $S^\pm$ are identified with $\pi^*\mathbf S$. Let $\Phi \in \Gamma(\mathbf S)$, $A\in \sA(\fs,B)$ over $M$ and $\xi \in \Omega^0(M, \ad(\fs))$. Then $A,\xi $ will induce a connection $\bA \in \sA(\fs,\pi^*B)$ over $X$ such that $\ad(\bA)=\pi^*\ad(A)+\pi^*\xi \ dt$. Consider $\pi^*\Phi  \in\Gamma (\pi^*\mathbf S) \cong \Gamma(\mathbf S^+)$.  Then the equations \autoref{sw eq} on $X$ for $(\bA,\pi^*\Phi)$ under the identifications above are equivalent to the equations \autoref{GSWb eq} on $M$ for $(A,\xi,\Phi)$. Thus the dimensional reduction of the GSW equations on $\mathbb R\times M$ is the GSW Bogomolny equations on $M$.
\end{remark}

\begin{definition}\label{def auxiliary curvature 3d} We define the \textbf{auxiliary curvature operator} $\fR\in \End(\bS)$ by 
\[\fR:=\frac{\text{scal}_g}{4} + \tilde\gamma(F_B).\qedhere\]
\end{definition}
\begin{prop}[{Lichenerowicz--Weitzenb\"ock formula, \cite[Proposition 5.1.5]{Morgan1996}}]\label{prop bochner weitzen 3}
	Suppose $A\in\sA(\fs,B)$ and $\Phi \in\Gamma(\mathbf S)$. Then
	\begin{equation*}
	\slD_A^2\Phi=\nabla^*_A\nabla_A\Phi+\tilde\gamma(F_{\ad(A)})\Phi+ \fR\Phi.
	\end{equation*}
\end{prop}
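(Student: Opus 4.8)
This is the three–dimensional, $\Spin^H$-twisted version of the classical Lichnerowicz–Weitzenböck identity (and the exact analogue of \autoref{prop bochner weitzen}), so the plan is to run the standard computation, being careful only about the bookkeeping of the twisting curvature. First I would fix a point $p\in M$ and pick an oriented orthonormal frame $\{e_1,e_2,e_3\}$ near $p$ that is synchronous at $p$, i.e.\ $\nabla_{e_i}e_j(p)=0$, so that also $[e_i,e_j](p)=0$ and $(\nabla_{A,e_i}\gamma)(e_j)(p)=\gamma(\nabla_{e_i}e_j)(p)=0$. Evaluating at $p$,
\[
\slD_A^2\Phi=\sum_{i,j=1}^3\gamma(e_i)\nabla_{A,e_i}\bigl(\gamma(e_j)\nabla_{A,e_j}\Phi\bigr)=\sum_{i,j}\gamma(e_i)\gamma(e_j)\,\nabla_{A,e_i}\nabla_{A,e_j}\Phi .
\]
I would split this sum into the part with $i=j$ and the part with $i\neq j$.

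For the diagonal part, the Clifford relation $\gamma(e_i)^2=-1$ (which holds because $i,j,k$ act on $S$ by isometries with $i^2=j^2=k^2=-1$, and $\Im\mathbb H$ is Clifford-generated by $i,j,k$) gives $-\sum_i\nabla_{A,e_i}\nabla_{A,e_i}\Phi$, which equals $\nabla_A^*\nabla_A\Phi$ at $p$ since the frame is synchronous. For the off-diagonal part, using $\gamma(e_i)\gamma(e_j)=-\gamma(e_j)\gamma(e_i)$ for $i\neq j$ and relabelling the summation index, one antisymmetrizes to
\[
\sum_{i\neq j}\gamma(e_i)\gamma(e_j)\,\nabla_{A,e_i}\nabla_{A,e_j}\Phi=\sum_{i<j}\gamma(e_i)\gamma(e_j)\bigl(\nabla_{A,e_i}\nabla_{A,e_j}-\nabla_{A,e_j}\nabla_{A,e_i}\bigr)\Phi=\sum_{i<j}\gamma(e_i)\gamma(e_j)\,R^{\bS}(e_i,e_j)\Phi ,
\]
where $R^{\bS}$ is the curvature of the connection induced by $A$ on $\bS$ and the last equality uses $[e_i,e_j](p)=0$.

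It then remains to decompose $R^{\bS}$ according to the three factors of $\Spin^H(3)=(\Sp(1)\times H)/\{\pm1\}$ underlying $\bS=\fs\times_{\Spin^H(3)}S$: the Levi-Civita connection on the $\Spin(3)=\Sp(1)$ factor, the connection $A$ on the structure-group $G$-part, and the connection $B$ on the auxiliary $K$-part. Correspondingly $R^{\bS}(e_i,e_j)$ splits as the spin action $\tfrac14\sum_{k,l}R_{ijkl}\gamma(e_k)\gamma(e_l)$ of the Riemann tensor, plus $\rho\bigl(F_{\ad(A)}(e_i,e_j)\bigr)$, plus the action of $F_B(e_i,e_j)$. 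Inserting the Riemannian piece into $\sum_{i<j}\gamma(e_i)\gamma(e_j)(\cdots)$ produces $\tfrac{\text{scal}_g}{4}\Phi$ by exactly the first-Bianchi manipulation of the untwisted Lichnerowicz identity. For the remaining two pieces I would use that in dimension three Clifford multiplication by a $2$-form equals $\gamma$ of its Hodge-dual $1$-form, so that, together with the defining relation $\tilde\gamma(v\otimes\xi)=\gamma(v)\circ\rho(\xi)$, the sum $\sum_{i<j}\gamma(e_i)\gamma(e_j)\rho\bigl(F_{\ad(A)}(e_i,e_j)\bigr)$ becomes $\tilde\gamma(F_{\ad(A)})$ and the $F_B$-term becomes $\tilde\gamma(F_B)$. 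Summing the three contributions and recalling $\fR=\tfrac{\text{scal}_g}{4}+\tilde\gamma(F_B)$ from \autoref{def auxiliary curvature 3d} yields the stated formula.

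The only genuinely delicate point is the last step: matching the Clifford action of the $\ad(\fs)$- (resp.\ $\Lie(\sK)$-)valued curvature $2$-form with the operator $\tilde\gamma$, which as defined acts on $TM\otimes\ad(\fs)$, and keeping all signs and normalization constants consistent with the $\Spin^H$-conventions fixed in \autoref{sec GSW 4d}. Equivalently, one may simply invoke the standard Weitzenböck formula for the Dirac operator twisted by $(\ad(\fs),\ad(A))$ together with the auxiliary connection $B$, identifying the twisting curvature term with $\tilde\gamma(F_{\ad(A)})+\tilde\gamma(F_B)$; this is the three-dimensional counterpart of \autoref{prop bochner weitzen}, and the model case is recorded in \cite[Proposition 5.1.5]{Morgan1996}.
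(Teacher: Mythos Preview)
The paper does not give its own proof of this proposition: it is stated as a citation to \cite[Proposition 5.1.5]{Morgan1996} and used without further argument. Your proposal is correct and is precisely the standard derivation one would expect in such a reference---local synchronous frame, diagonal/off-diagonal splitting of $\slD_A^2$, and decomposition of the spinor-bundle curvature into its Levi-Civita, $\ad(A)$, and auxiliary-$B$ pieces---so there is nothing to compare.
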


The following identities, whose proofs can be found in \cite[Appendix B]{Doan2017a}, will be useful in later sections. 
\begin{prop}\label{prop identity  3}
	Suppose $\xi\in\Omega^0(M,\ad(\fs)), a \in\Omega^1(M,\ad(\fs)),$ and $\Phi\in\Gamma(\mathbf S)$. Then
	\begin{enumerate}[(i)]
 \item $[\xi,\mu(\Phi)]=2\mu(\Phi,\rho(\xi)\Phi)$,
	\item $[a\wedge \mu(\Phi)]=-*\rho^*((\tilde \gamma(a)\Phi)\Phi^*)$.
	\end{enumerate}
\end{prop}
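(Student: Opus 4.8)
The plan is to establish both identities as pointwise, tensorial statements about the model data $\rho:\mathfrak g\to\End(S)$ on the quaternionic hermitian vector space $S$ — with $\Im\mathbb H$ identified with $\Lambda^2(\mathbb R^3)^*$ via the Hodge star, exactly as in dimension four — and then transport them to the associated bundles over $M$. Two structural facts are used throughout. First, since each $\rho(g)$ is quaternion-linear, $\rho(\zeta)$ commutes with every Clifford multiplication $\gamma(v)$; in particular $\tilde\gamma(v\otimes\zeta)=\gamma(v)\circ\rho(\zeta)$ is self-adjoint, being a product of two commuting skew-adjoint operators. Second, applying Euler's homogeneity relation $(d\mu)_\Phi\Phi=2\mu(\Phi)$ to the defining property $\langle(d\mu)_\Phi\phi,v\otimes\zeta\rangle=\langle\gamma(v)\rho(\zeta)\Phi,\phi\rangle$ yields $\langle\mu(\Phi),v\otimes\zeta\rangle=\tfrac12\langle\tilde\gamma(v\otimes\zeta)\Phi,\Phi\rangle$, whence by polarization $\langle\mu(\Phi,\psi),v\otimes\zeta\rangle=\tfrac12\langle\tilde\gamma(v\otimes\zeta)\Phi,\psi\rangle$.

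For part (i) I would differentiate the $G$-equivariance of $\mu$ stated above. Because $\rho(g)$ is quaternion-linear, the relation $\mu(\rho(g)\Phi)=\Ad_g\mu(\Phi)$ has $\Ad_g$ acting only on the $\mathfrak g$-factor; substituting $g=\exp(t\xi)$ and differentiating at $t=0$ yields $2\mu(\Phi,\rho(\xi)\Phi)=(d\mu)_\Phi(\rho(\xi)\Phi)$ on one side and $[\xi,\mu(\Phi)]$ on the other. (Alternatively one argues directly: $2\mu(\Phi,\rho(\xi)\Phi)=\tfrac12\tilde\gamma^*([\rho(\xi),\Phi\Phi^*])$ using $\rho(\xi)^*=-\rho(\xi)$, and $\tilde\gamma^*([\rho(\xi),\,\cdot\,])=[\xi,\tilde\gamma^*(\,\cdot\,)]$ since $[\tilde\gamma(v\otimes\zeta),\rho(\xi)]=\gamma(v)[\rho(\zeta),\rho(\xi)]=\tilde\gamma(v\otimes[\zeta,\xi])$ and the trace pairing on $\End(S)$ is conjugation-invariant.)

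For part (ii) I would compute the two sides explicitly. By bilinearity it suffices to take $a=u\otimes\eta$ with $u\in\Im\mathbb H$ and $\eta\in\mathfrak g$; both sides are then top-degree forms valued in $\ad(\fs)$, so I strip off the volume form (in dimension three $*$ carries $\Omega^0$ onto $\Omega^3$) and pair against an arbitrary $\zeta\in\mathfrak g$. On the left, the wedge on the $\Im\mathbb H$-factors of $a$ and $\mu(\Phi)$ collapses to a contraction, and using the $\Ad$-invariant inner product on $\mathfrak g$ together with the formula for $\mu$ above, the left side becomes $\langle\mu(\Phi),u\otimes[\zeta,\eta]\rangle=\tfrac12\langle\gamma(u)\rho([\zeta,\eta])\Phi,\Phi\rangle$. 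On the right, $\tilde\gamma(a)\Phi=\gamma(u)\rho(\eta)\Phi$, so pairing $-\rho^*((\gamma(u)\rho(\eta)\Phi)\Phi^*)$ with $\zeta$ and moving $\rho(\zeta)$ across — it is skew-adjoint and commutes with $\gamma(u)$ — gives $\langle\gamma(u)\rho(\zeta)\rho(\eta)\Phi,\Phi\rangle$. These agree because $\langle\gamma(u)\rho(\eta)\rho(\zeta)\Phi,\Phi\rangle=-\langle\gamma(u)\rho(\zeta)\rho(\eta)\Phi,\Phi\rangle$ — again from skew-adjointness of $\gamma(u),\rho(\zeta),\rho(\eta)$ and the commutativity of $\gamma(u)$ with both $\rho$'s — so that inside $\langle\gamma(u)\,\cdot\,\Phi,\Phi\rangle$ one may replace $\tfrac12\rho([\zeta,\eta])$ by $\rho(\zeta)\rho(\eta)$.

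I do not expect a conceptual obstacle here: the one thing to be careful about is the bookkeeping — the identification $\Im\mathbb H\cong\Lambda^2(\mathbb R^3)^*$ with its Hodge stars and musical isomorphisms, the volume-form factors, and the sign and reality conventions under which $\rho^*$ effectively lands in $\mathfrak g$. These are the three-dimensional counterparts of the four-dimensional identities in \autoref{prop identitiy}, and a fully detailed proof following exactly this scheme appears in \cite[Appendix B]{Doan2017a}.
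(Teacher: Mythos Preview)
Your proposal is correct and aligns with the paper's treatment: the paper does not give an in-text proof at all but simply refers to \cite[Appendix B]{Doan2017a}, the same reference you cite at the end. Your sketch---differentiating $G$-equivariance for (i) and pairing both sides against a test element $\zeta\in\mathfrak g$ for (ii)---is exactly the argument carried out there, so there is nothing to add.
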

\begin{prop}\label{prop identity with d, d* dim 3}
	Suppose $A\in\mathcal A(\fs,B)$ and $\Phi\in\Gamma(\bS)$. Then
	\begin{enumerate}[(i)]
 \item $d_{\ad(A)}\mu(\Phi)=-*\rho^*((\slD_A\Phi)\Phi^*)$,
\item $d^*_{\ad(A)}\mu(\Phi)=*2\mu(\slD_A\Phi,\Phi)-\rho^*((\nabla_A\Phi)\Phi^*)$.
\end{enumerate}
\end{prop}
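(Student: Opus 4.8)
The plan is to reduce both identities to a pointwise algebraic computation in a local orthonormal frame. Fix $p\in M$ and a local oriented orthonormal frame $\{e_1,e_2,e_3\}$. For any $\ad(\fs)$-valued form $\alpha$ one has the genuine identities $d_{\ad(A)}\alpha=\sum_i e^i\wedge\nabla_{\ad(A),e_i}\alpha$ and $d^*_{\ad(A)}\alpha=-\sum_i\iota_{e_i}\nabla_{\ad(A),e_i}\alpha$, since the connection coupling the Levi--Civita connection to $\ad(A)$ is metric and torsion-free. The key structural input is that $\gamma$, $\tilde\gamma$, $\rho$ and the quadratic bundle map $\mu=\tfrac12\tilde\gamma^*(\Phi\Phi^*)$ all come from the fixed algebraic data $(H,\rho,G)$ and are $\Spin^H(3)$-equivariant; hence $\mu$ is a parallel bundle map and the chain rule gives $\nabla_{\ad(A),e_i}\mu(\Phi)=2\mu(\Phi,\nabla_{A,e_i}\Phi)$, where $\mu(\cdot,\cdot)$ denotes the associated symmetric bilinear form. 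Together with the defining property of the moment map, $\langle\mu(\Phi,\psi),v\tn\xi\rangle=\tfrac12\langle\gamma(v)\rho(\xi)\Phi,\psi\rangle$ for $v\in\Im\mathbb H$, $\xi\in\fg$, and the Hodge-star identification $(\Im\mathbb H)^*\cong\Lambda^2(\Im\mathbb H)^*$, this is all the algebra that is needed.

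For (i), expand $d_{\ad(A)}\mu(\Phi)=\sum_i e^i\wedge 2\mu(\Phi,\nabla_{A,e_i}\Phi)$. In dimension three, $e^i\wedge\beta$ for a $2$-form $\beta$ lies in top degree and equals $(*\beta)(e_i)\,\vol$, so pairing against an arbitrary $\xi\in\ad(\fs)_p$ and invoking the moment-map identity, the $i$-th term becomes $\langle\gamma(e_i)\rho(\xi)\Phi,\nabla_{A,e_i}\Phi\rangle\,\vol$. Summing over $i$ and using that Clifford multiplication by a unit vector is skew-adjoint, $\sum_i\langle\gamma(e_i)\rho(\xi)\Phi,\nabla_{A,e_i}\Phi\rangle=-\langle\rho(\xi)\Phi,\slD_A\Phi\rangle=-\langle\rho^*((\slD_A\Phi)\Phi^*),\xi\rangle$; since $\xi$ is arbitrary this is identity (i).

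For (ii), expand $d^*_{\ad(A)}\mu(\Phi)=-2\sum_i\iota_{e_i}\mu(\Phi,\nabla_{A,e_i}\Phi)$. Here, in contrast to (i), contracting the $2$-form $\mu(\Phi,\nabla_{A,e_i}\Phi)$ with $e_i$ does not collapse to a diagonal term: computing in the frame, the coefficient of $e^j$, paired with $\xi$, is, up to a fixed universal constant, $\sum_{i\neq j}\langle\gamma(e_i)\gamma(e_j)\rho(\xi)\Phi,\nabla_{A,e_i}\Phi\rangle$, where one uses $\gamma(e_i)\gamma(e_j)=\pm\gamma(e_k)$ for $\{i,j,k\}=\{1,2,3\}$ to express the off-diagonal components of $\mu$ through the moment-map identity. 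The decisive step is to complete this to the full sum over all $i$: the full sum equals $-\langle\gamma(e_j)\rho(\xi)\Phi,\slD_A\Phi\rangle$ by the same skew-adjointness, while the subtracted diagonal term $i=j$ is $\langle\gamma(e_j)^2\rho(\xi)\Phi,\nabla_{A,e_j}\Phi\rangle=-\langle\rho(\xi)\Phi,\nabla_{A,e_j}\Phi\rangle$. Re-identifying $\langle\gamma(e_j)\rho(\xi)\Phi,\slD_A\Phi\rangle$ with the $e^j$-component of $*\,2\mu(\slD_A\Phi,\Phi)$ and $\langle\rho(\xi)\Phi,\nabla_{A,e_j}\Phi\rangle$ with that of $\rho^*((\nabla_A\Phi)\Phi^*)$, and letting $j$ and $\xi$ vary, yields identity (ii).

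I expect the main obstacle to be precisely this splitting in (ii) --- isolating the Dirac operator $\slD_A\Phi$ from the full covariant derivative $\nabla_A\Phi$, while keeping the Hodge-star and imaginary-quaternion identifications and the numerical constants (the factor $\tfrac12$ in $\mu=\tfrac12\tilde\gamma^*(\Phi\Phi^*)$ and the normalization of $(\Im\mathbb H)^*\cong\Lambda^2(\Im\mathbb H)^*$) consistent throughout; part (i) is by comparison a clean one-term computation. As a cross-check, (ii) is the pointwise shadow of the algebraic identity $[a\wedge\mu(\Phi)]=-*\rho^*((\tilde\gamma(a)\Phi)\Phi^*)$ established above, which may be invoked directly to shortcut the index bookkeeping; alternatively, both (i) and (ii) descend from their four-dimensional counterparts via the dimensional-reduction correspondence on $X=\mathbb R\times M$, but the direct pointwise argument is the most economical.
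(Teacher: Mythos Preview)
The paper does not actually prove this proposition; it simply states the identities and refers to \cite[Appendix B]{Doan2017a} for the proofs. Your direct pointwise computation in a local orthonormal frame is the natural approach and is almost certainly what the cited reference does as well. The argument for (i) is clean and correct as written. For (ii) your strategy---completing the off-diagonal Clifford sum $\sum_{i\neq j}$ to the full sum over $i$, recognising the full sum as $\slD_A\Phi$ and the subtracted diagonal term as the $\nabla_A\Phi$-contribution---is exactly the right decomposition; the phrase ``up to a fixed universal constant'' is the only soft spot, and pinning down the signs and the factor of $2$ coming from $(d\mu)_\Phi\psi=2\mu(\Phi,\psi)$ versus the $\tfrac12$ in the Hodge-star identification is genuine but routine bookkeeping. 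One small caveat: the shortcut you mention via $[a\wedge\mu(\Phi)]=-*\rho^*((\tilde\gamma(a)\Phi)\Phi^*)$ is itself stated in the paper without proof and attributed to the same reference, so invoking it is not an independent check---but your direct computation does not need it.
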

\begin{remark}
	Suppose M is an oriented closed Riemannian $3$-manifold and $(A,\xi,\Phi)$ is a solution of the GSW Bogomolny equations \autoref{GSWb eq}. Then $\nabla_{\ad(A)} \xi=0$, $\rho(\xi)\Phi=0$ and $(A,\Phi)$ is a GSW monopole. Indeed, by Bianchi identity and \autoref{prop identity with d, d* dim 3} we get
 \[0\leq \int_M\inp{\xi}{\Delta_{\ad(A)}\xi}=-\int_M\inp{\xi}{*d_{\ad(A)}\mu(\Phi)}=\int_M\inp{\xi}{\rho^*((\slD_A\Phi)\Phi^*)}=-\int_M\abs{\rho(\xi)\Phi}^2.\qedhere\]
\end{remark}
We again define a Yang--Mills--Higgs energy (YMH) functional and will see in the following that on an oriented closed $3$-manifold absolute minima of this functional are generalized Seiberg--Witten Bogomolny monopoles.  
\begin{definition}The \textbf{Yang--Mills--Higgs energy functional} $\sE_3:\sA(\fs,B) \times \Omega^0(M,\ad(\fs)) \times \Gamma(\mathbf S)\to \mathbb R$ is defined by
	\[\sE_3(A,\xi,\Phi)=
\|F_{\ad(A)}\|_{L^2}^2+\|\nabla_{\ad(A)}\xi\|_{L^2}^2+\|\nabla_A\Phi\|_{L^2}^2+\|\mu(\Phi)\|_{L^2}^2+\|\rho(\xi)\Phi\|_{L^2}^2+\inp{\fR \Phi}{\Phi}_{L^2}.\qedhere\]
\end{definition}

\begin{remark}\label{benergy equal}
	Suppose M is an oriented closed Riemannian $3$-manifold. Then by \autoref{prop bochner weitzen 3}, \autoref{prop identity with d, d* dim 3} and Bianchi idenity we obtain for any $ (A,\xi,\Phi) \in \sA(\fs,B) \times \Omega^0(M,\ad(\fs)) \times \Gamma(\mathbf S)$, 
	\begin{align*}
&\int_M\abs{\slD_A\Phi+\rho(\xi)\Phi}^2+
	\abs{F_{\ad(A)}-*d_{\ad(A)}\xi-\mu(\Phi)}^2\\	&=\sE_3(A,\xi,\Phi)+2\int_M\inp{\slD_A\Phi}{\rho(\xi)\Phi}+\inp{*d_{\ad(A)}\mu(\Phi)}{\xi}=\sE_3(A,\xi,\Phi).
	\end{align*}
Thus the absolute minima of $\sE_3$ are GSW Bogomolny monopoles. \end{remark}
\begin{prop}
	The \textbf{Euler--Lagrange equations} of the energy functional $\sE_3$ are the following:
	\begin{equation}\label{EL eq}
	\begin{split}
	d^*_{\ad(A)} F_{\ad(A)}=[d_{\ad(A)}\xi,\xi]-\rho^*((\nabla_A\Phi)\Phi^*),\\
	\Delta_{\ad(A)}\xi=-\rho^*((\rho(\xi)\Phi)\Phi^*),\\
	\nabla_A^*\nabla_A\Phi=\rho(\xi)^2\Phi-\tilde \gamma(\mu(\Phi))\Phi- \fR\Phi.
	\end{split}
	\end{equation}
\end{prop}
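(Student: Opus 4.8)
The plan is to derive \autoref{EL eq} by the same direct first-variation computation used to obtain the Euler--Lagrange equations \autoref{eq:EL4} for $\sE_4$, now varying $\sE_3$ separately in each of its three arguments. Fix $(A,\xi,\Phi)\in\sA(\fs,B)\times\Omega^0(M,\ad(\fs))\times\Gamma(\bS)$ and compactly supported variations $a\in\Omega^1(M,\ad(\fs))$, $\eta\in\Omega^0(M,\ad(\fs))$, $\phi\in\Gamma(\bS)$. For $\abs{t}\ll 1$ one expands $A+ta$, $\xi+t\eta$, $\Phi+t\phi$ and collects the $O(t)$ terms in $\sE_3$. The only structural inputs needed, all elementary, are: $F_{\ad(A+ta)}=F_{\ad(A)}+t\,d_{\ad(A)}a+O(t^2)$; the induced connection on $\ad(\fs)$ varies by $\nabla_{\ad(A+ta)}=\nabla_{\ad(A)}+t[a,\,\cdot\,]+O(t^2)$; the induced connection on $\bS$ varies by $\nabla_{A+ta}=\nabla_A+t\rho(a)+O(t^2)$; the adjoint identities $\inp{d_{\ad(A)}a}{F}_{L^2}=\inp{a}{d^*_{\ad(A)}F}_{L^2}$ and $\inp{\rho(\xi)\Phi}{\psi}=\inp{\xi}{\rho^*(\psi\Phi^*)}$; the $\ad$-invariance of the inner product on $\fg$; the skew-Hermiticity of $\rho(\xi)$; the self-adjointness of $\fR$ (which is fixed, depending only on $g$ and $B$); and the moment-map identity $\tfrac{d}{dt}\|\mu(\Phi+t\phi)\|_{L^2}^2=2\inp{\tilde\gamma(\mu(\Phi))\Phi}{\phi}_{L^2}+O(t)$, exactly as already invoked in the four-dimensional case.

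Carrying this out term by term, grouped by the variable being varied: varying $A$, the summand $\|F_{\ad(A)}\|_{L^2}^2$ contributes $2\inp{a}{d^*_{\ad(A)}F_{\ad(A)}}_{L^2}$; the summand $\|\nabla_{\ad(A)}\xi\|_{L^2}^2$ contributes $2\inp{[a,\xi]}{\nabla_{\ad(A)}\xi}_{L^2}=-2\inp{a}{[d_{\ad(A)}\xi,\xi]}_{L^2}$ after an $\ad$-invariance manipulation; and $\|\nabla_A\Phi\|_{L^2}^2$ contributes $2\inp{\rho(a)\Phi}{\nabla_A\Phi}_{L^2}=2\inp{a}{\rho^*((\nabla_A\Phi)\Phi^*)}_{L^2}$; setting the sum to zero for all $a$ gives the first line of \autoref{EL eq}. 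Varying $\xi$, the summand $\|\nabla_{\ad(A)}\xi\|_{L^2}^2$ contributes $2\inp{\nabla_{\ad(A)}\eta}{\nabla_{\ad(A)}\xi}_{L^2}=2\inp{\eta}{\Delta_{\ad(A)}\xi}_{L^2}$ and $\|\rho(\xi)\Phi\|_{L^2}^2$ contributes $2\inp{\rho(\eta)\Phi}{\rho(\xi)\Phi}_{L^2}=2\inp{\eta}{\rho^*((\rho(\xi)\Phi)\Phi^*)}_{L^2}$, giving the second line. Varying $\Phi$, the summands $\|\nabla_A\Phi\|_{L^2}^2$, $\|\mu(\Phi)\|_{L^2}^2$, $\|\rho(\xi)\Phi\|_{L^2}^2$, $\inp{\fR\Phi}{\Phi}_{L^2}$ contribute, respectively, $2\inp{\nabla_A^*\nabla_A\Phi}{\phi}_{L^2}$, $2\inp{\tilde\gamma(\mu(\Phi))\Phi}{\phi}_{L^2}$, $-2\inp{\rho(\xi)^2\Phi}{\phi}_{L^2}$ (using skew-Hermiticity of $\rho(\xi)$), and $2\inp{\fR\Phi}{\phi}_{L^2}$; setting the sum to zero gives the third line. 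One also notes (as a trivial bookkeeping point) that varying one of the three fields leaves the remaining cross terms untouched, since $\mu(\Phi)$ is algebraic in $\Phi$, $F_{\ad(A)}$ and $d_{\ad(A)}\xi$ do not see $\Phi$, and $\fR$ depends on none of the three.

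No genuine obstacle arises; the derivation is entirely parallel to the four-dimensional one. The only place demanding care is the sign bookkeeping in the two cross terms produced by the $A$-variation: the $\ad$-invariance step $\inp{[a,\xi]}{d_{\ad(A)}\xi}=-\inp{a}{[d_{\ad(A)}\xi,\xi]}$, which is what inserts the nonabelian Higgs source $[d_{\ad(A)}\xi,\xi]$ into the Yang--Mills equation, and the skew-Hermiticity identity $\inp{\rho(\xi)\phi}{\rho(\xi)\Phi}=-\inp{\phi}{\rho(\xi)^2\Phi}$, which is responsible for the $+\rho(\xi)^2\Phi$ on the right-hand side of the spinor equation. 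Once these signs are pinned down, assembling the three stationarity conditions yields \autoref{EL eq} verbatim.
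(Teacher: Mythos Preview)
Your proposal is correct and follows essentially the same approach as the paper: a direct first-variation computation of each summand in $\sE_3$, using the same structural inputs (skew-Hermiticity of $\rho(\xi)$, $\ad$-invariance, the moment-map derivative identity, and integration by parts). The only cosmetic difference is that the paper organizes the computation term-by-term in $\sE_3$ (varying all three fields simultaneously in each summand), whereas you group the contributions by which of $A$, $\xi$, $\Phi$ is being varied; the content is identical.
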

\begin{proof}Suppose $A \in\sA(\fs,B)$, $a\in \Omega^1(M,\ad(\fs))$, $\xi,\eta\in \Omega^0(M,\ad(\fs))$, $\Phi,\Psi \in  \Gamma(\mathbf S)$. Assume that $a$, $\eta$, $\Psi$ are compactly supported. For $\abs{t}\ll 1$ we obtain,
\[\frac d {dt}\|F_{\ad(A)+ta}\|_{L^2}^2=2\inp{d_A^*F_{\ad(A)}}{a}_{L^2}+O(t),\]
\[\frac d {dt}\|\nabla_{\ad(A)+ta}(\xi+t\eta)\|_{L^2}^2=2\inp{\Delta_{\ad(A)}\xi}{\eta}_{L^2}-2\inp{[d_{\ad(A)}\xi,\xi]}{a}_{L^2}+O(t),\]
\[\frac d {dt}\|\nabla_{A+ta}(\Phi+t\Psi)\|_{L^2}^2=2\inp{\nabla_A^*\nabla_A\Phi}{\Psi}_{L^2}+2\inp{\rho^*((\nabla_A\Phi)\Phi^*)}{a}_{L^2}+O(t),\]
\[\frac d {dt}\|\rho(\xi+t\eta)(\Phi+t\Psi)\|_{L^2}^2=2\inp{\rho^*((\rho(\xi)\Phi)\Phi^*)}{\eta}_{L^2}-2\inp{\rho(\xi)^2\Phi}{\Psi}_{L^2}+O(t),\]
\[\frac d {dt}\|\mu(\Phi+t\Psi)\|_{L^2}^2=2\inp{\tilde \gamma(*\mu(\Phi))\Phi}{\Psi}_{L^2}+O(t),\]
and
\[\frac d {dt}\inp{\fR (\Phi+t\Psi)}{\Phi+t\Psi}_{L^2}=2\inp{\fR\Phi}{\Psi}_{L^2}+O(t).\qedhere\]
\end{proof}

\begin{remark}
	If $ (A,\xi,\Phi)$ is a GSW Bogomolny monopole, then it satisfies the Euler--Lagrange equations \autoref{EL eq}. Indeed, this follows from \autoref{benergy equal} directly.
	Alternatively we can do  the following direct computations using \autoref{prop identity with d, d* dim 3} and \autoref{prop identity  3}.
\begin{align*}
		d^*_{\ad(A)} F_{\ad(A)} &=*d_{\ad(A)} d_{\ad(A)}\xi+d^*_{\ad(A)}\mu(\Phi)\\
		&=*[F_{\ad(A)},\xi]+ 2*\mu(\slD_A\Phi,\Phi)-\rho^*((\nabla_A\Phi)\Phi^*)\\
		&= [d_{\ad(A)} \xi,\xi]+*[\mu(\Phi),\xi]- 2*\mu(\rho(\xi)\Phi,\Phi)-\rho^*((\nabla_A\Phi)\Phi^*)\\
  &=[d_{\ad(A)}\xi,\xi]-\rho^*((\nabla_A\Phi)\Phi^*),
		\end{align*}
		\begin{align*}
		\Delta_{\ad(A)}\xi=&d^*_{\ad(A)} d_{\ad(A)}\xi=-*d_{\ad(A)}\mu(\Phi)=\rho^*((\slD_A\Phi)\Phi^*)=-\rho^*((\rho(\xi)\Phi)\Phi^*),
		\end{align*}
	\begin{align*}
		\nabla_A^*\nabla_A\Phi&=-\slD_A(\rho(\xi)\Phi)-\tilde \gamma(F_{\ad(A)})\Phi- \fR\Phi\\
		&=-\rho(\xi)\slD_A\Phi+\tilde \gamma(*d_{\ad(A)}\xi)\Phi-\tilde \gamma(*d_{\ad(A)}\xi)\Phi-\tilde \gamma(\mu(\Phi))\Phi- \fR\Phi\\
		&=\rho(\xi)^2\Phi-\tilde \gamma(\mu(\Phi))\Phi- \fR\Phi.\qedhere
		\end{align*}
\end{remark}
By taking inner product with $\xi$ and $\Phi$ in the second and third equations of the equations \autoref{EL eq}, we derive the following Bochner identities as a corollary.
\begin{cor}\label{Bochner 3}
	Let $(A, \xi, \Phi)$ be a solution to the generalized Seiberg--Witten Bogomolny equations \autoref{eq GSWb 3d formal}, or more generally, to the Euler--Lagrange equations \autoref{EL eq}. Then
	\begin{equation}\label{eq Bochner 3}
	\frac 12\Delta\abs{\Phi}^2+\abs{\rho(\xi)\Phi}^2+2\abs{	\mu(\Phi)}^2+\abs{\nabla_A\Phi}^2+\inp{\fR\Phi}{\Phi}=0,
	\end{equation}
and
\begin{equation*}
	\frac 12\Delta\abs{\xi}^2+\abs{\rho(\xi)\Phi}^2+\abs{\nabla_{\ad(A)}\xi}^2=0. \qedhere
	\end{equation*}
\end{cor}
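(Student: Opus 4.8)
The plan is to obtain both identities by substituting the Euler--Lagrange equations \autoref{EL eq} into the elementary pointwise Bochner formula
$\tfrac12\Delta|s|^2 = \inp{\nabla^*\nabla s}{s} - |\nabla s|^2$,
which holds for any section $s$ of a bundle with metric connection $\nabla$ (here $\Delta$ is the nonnegative Laplacian, matching the convention used in \autoref{Bochner1}). I would apply it once with $s=\xi$ and the connection $\nabla_{\ad(A)}$, and once with $s=\Phi$ and the connection $\nabla_A$.

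First, for $\xi$: since $\xi$ is a $0$-form, $\nabla_{\ad(A)}^*\nabla_{\ad(A)}\xi=\Delta_{\ad(A)}\xi$, and the second equation of \autoref{EL eq} gives $\Delta_{\ad(A)}\xi=-\rho^*((\rho(\xi)\Phi)\Phi^*)$. Pairing with $\xi$ and using that $\rho^*$ is the fiberwise adjoint of $\rho\colon\ad(\fs)\to\End(\bS)$ together with the rank-one identity $\inp{\chi\Phi^*}{T}=\inp{\chi}{T\Phi}$ for $T\in\End(\bS)$, one obtains $\inp{\Delta_{\ad(A)}\xi}{\xi}=-|\rho(\xi)\Phi|^2$, and substituting into the Bochner formula yields the claimed identity $\tfrac12\Delta|\xi|^2+|\rho(\xi)\Phi|^2+|\nabla_{\ad(A)}\xi|^2=0$. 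Second, for $\Phi$: the third equation of \autoref{EL eq} reads $\nabla_A^*\nabla_A\Phi=\rho(\xi)^2\Phi-\tilde\gamma(\mu(\Phi))\Phi-\fR\Phi$. Pairing with $\Phi$, the term $\inp{\rho(\xi)^2\Phi}{\Phi}$ equals $-|\rho(\xi)\Phi|^2$ because $\rho(\xi)$ is skew-Hermitian; the term $\inp{\tilde\gamma(\mu(\Phi))\Phi}{\Phi}$ equals $2|\mu(\Phi)|^2$ by the algebraic moment-map identity that is the three-dimensional counterpart of \autoref{prop identitiy} (a purely algebraic consequence of the defining property $\inp{(d\mu)_\Phi\phi}{v\otimes\xi}=\inp{\gamma(v)\rho(\xi)\Phi}{\phi}$ of $\mu$); and the remaining term is $\inp{\fR\Phi}{\Phi}$. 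Substituting into the Bochner formula and rearranging gives \eqref{eq Bochner 3}.

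I do not expect a genuine obstacle here — the corollary is essentially bookkeeping on top of \autoref{EL eq}. The only points requiring care are the sign and adjoint conventions: that $\Delta$ is taken nonnegative, that $\rho^*$ denotes the fiberwise adjoint of $\rho$, and that $\rho(\xi)^*=-\rho(\xi)$ because $\rho$ is a unitary representation and $\xi$ takes values in its Lie algebra. Finally, the hypothesis of the corollary is exactly what is needed: for a GSW Bogomolny monopole the second and third equations of \autoref{EL eq} hold by the remark preceding the corollary, and for a general solution of \autoref{EL eq} they hold by definition, so in either case the substitutions above are justified.
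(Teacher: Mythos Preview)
Your proposal is correct and follows exactly the approach indicated in the paper: the sentence preceding the corollary says that one takes the inner product with $\xi$ and $\Phi$ in the second and third Euler--Lagrange equations \eqref{EL eq}, which is precisely your substitution into the pointwise Bochner identity $\tfrac12\Delta|s|^2=\inp{\nabla^*\nabla s}{s}-|\nabla s|^2$. Your handling of the individual terms (skew-Hermitian $\rho(\xi)$, the moment-map identity $\inp{\tilde\gamma(\mu(\Phi))\Phi}{\Phi}=2|\mu(\Phi)|^2$, and the adjoint of $\rho^*$) is correct and matches the conventions used elsewhere in the paper.
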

The following corollary is obtained by applying an integration by parts to the above Bochner identities.
\begin{cor}
	Let $\Omega$ be a bounded open subset of $X$ with smooth boundary $\partial \Omega$  and $f\in C^\infty(\bar \Omega)$. Suppose $(A,\xi,\Phi)$ satisfies the equations \autoref{eq Bochner 3} on $\Omega$, then 
	\begin{align*}&\frac 12 \int_\Omega \Delta f \cdot \abs{\Phi}^2+\int_\Omega f \cdot (\abs{\nabla_A\Phi}^2+\abs{\rho(\xi)\Phi}^2+2\abs {\mu(\Phi) }^2)\\
	&=-\int_\Omega f \cdot \inp{\fR\Phi}{\Phi}
	+\frac 12 \int_{\partial \Omega} f\cdot \partial_\nu\abs{\Phi}^2-\partial_\nu f \cdot\abs{\Phi}^2.\qedhere
	\end{align*}
\end{cor}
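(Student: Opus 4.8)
The plan is to multiply the Bochner identity \autoref{eq Bochner 3} by the test function $f$, integrate over $\Omega$, and then apply Green's second identity to transfer two derivatives off $\abs{\Phi}^2$ and onto $f$. Concretely, I would rewrite \autoref{eq Bochner 3} as
\[
\tfrac12\,\Delta\abs{\Phi}^2 \;=\; -\abs{\nabla_A\Phi}^2 - \abs{\rho(\xi)\Phi}^2 - 2\abs{\mu(\Phi)}^2 - \inp{\fR\Phi}{\Phi},
\]
multiply both sides by $f\in C^\infty(\bar\Omega)$, and integrate over $\Omega$ to obtain
\[
\tfrac12\int_\Omega f\,\Delta\abs{\Phi}^2 \;=\; -\int_\Omega f\bigl(\abs{\nabla_A\Phi}^2 + \abs{\rho(\xi)\Phi}^2 + 2\abs{\mu(\Phi)}^2\bigr) - \int_\Omega f\,\inp{\fR\Phi}{\Phi}.
\]
Since $(A,\xi,\Phi)$ is smooth (by elliptic regularity) and $\partial\Omega$ is smooth, Green's second identity applies to the pair $(f,\abs{\Phi}^2)$; with the sign convention of this paper, in which $\Delta = d^*d$ is the non-negative Hodge Laplacian on functions, it reads
\[
\int_\Omega f\,\Delta\abs{\Phi}^2 \;=\; \int_\Omega \abs{\Phi}^2\,\Delta f + \int_{\partial\Omega}\bigl(\abs{\Phi}^2\,\partial_\nu f - f\,\partial_\nu\abs{\Phi}^2\bigr).
\]
Substituting this into the previous display, moving the interior term involving $\abs{\nabla_A\Phi}^2,\abs{\rho(\xi)\Phi}^2,\abs{\mu(\Phi)}^2$ to the left and the boundary term to the right, yields exactly the asserted identity.

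I do not expect any genuine obstacle here: this is simply the three-dimensional analogue of the corollary stated just after \autoref{Bochner1} in the four-dimensional case, and the computation is the same. The only points that need attention are bookkeeping ones: first, keeping track of the sign convention for $\Delta$ — here the geometer's Laplacian $d^*d$, as is already visible from the identity $\tfrac12\Delta\abs{\Phi}^2 = \inp{\nabla_A^*\nabla_A\Phi}{\Phi} - \abs{\nabla_A\Phi}^2$ used earlier — so that the boundary term in Green's formula carries precisely the signs displayed above; and second, observing that only the first equation of \autoref{Bochner 3}, namely the Bochner identity \autoref{eq Bochner 3} for $\abs{\Phi}^2$, enters, while the companion identity for $\abs{\xi}^2$ plays no role. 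Smoothness of $(A,\xi,\Phi)$ and of $\partial\Omega$ ensures that all integrals are finite and that the integration by parts is legitimate, which completes the proof.
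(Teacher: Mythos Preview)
Your proposal is correct and follows exactly the approach indicated in the paper, which states only that the corollary ``is obtained by applying an integration by parts to the above Bochner identities.'' You have simply filled in the details of that integration by parts via Green's second identity, with the correct sign convention for the geometer's Laplacian.
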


The next two propositions highlight how the maximum principle imposes significant restrictions on the behavior of the GSW Bogomolny monopoles under the assumption of non-negative self-dual auxiliary curvature. 

\begin{prop} Let $(M,g)$ be an oriented Riemannian $3$-manifold and $(A,\xi,\Phi)$ be a GSW Bogomolny monopole or more generally a solution of the Euler--Lagrange equations \autoref{EL eq}. Then
	\begin{enumerate}[(i)]
		\item If  M is closed, then $\abs{\xi}^2$ is constant, or equivalently $\rho(\xi)\Phi=0$ and $\nabla_{\ad(A)}\xi= 0$.
		\item If M is noncompact and $\abs{\xi}^2$ {decays to zero at infinity}. Then $ \xi=0$.\qedhere
		\end{enumerate}
  \end{prop}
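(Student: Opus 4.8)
The plan is to read off the result from the second Bochner identity established in \autoref{Bochner 3}, namely
\[
\frac 12\Delta\abs{\xi}^2+\abs{\rho(\xi)\Phi}^2+\abs{\nabla_{\ad(A)}\xi}^2=0 ,
\]
which holds for any solution of the Euler--Lagrange equations \autoref{EL eq} (in particular for any GSW Bogomolny monopole) and, notably, requires no hypothesis on the auxiliary curvature $\fR$. Since the two zeroth-order terms are non-negative, this forces $\tfrac 12\Delta\abs{\xi}^2\leq 0$; with the sign convention $\Delta=\nabla^*\nabla$ used throughout, this says exactly that $\abs{\xi}^2$ is subharmonic. From here everything is a routine application of the maximum principle, in direct parallel with the argument for $\abs{\Phi}^2$ carried out above in the four-dimensional setting.

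For part (i), when $M$ is closed I would integrate the identity over $M$: the divergence theorem gives $\int_M\Delta\abs{\xi}^2=0$, while $\Delta\abs{\xi}^2\leq 0$ pointwise, so $\Delta\abs{\xi}^2\equiv 0$ and hence $\abs{\rho(\xi)\Phi}^2+\abs{\nabla_{\ad(A)}\xi}^2\equiv 0$. This yields $\rho(\xi)\Phi=0$ and $\nabla_{\ad(A)}\xi=0$, and the second of these is equivalent to $\abs{\xi}^2$ being constant; conversely, if $\abs{\xi}^2$ is constant then $\Delta\abs{\xi}^2=0$ and the same identity returns both vanishing statements. (Equivalently, on a connected closed manifold a subharmonic function attains its maximum and is therefore constant.)

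For part (ii), assume $M$ is noncompact with $\abs{\xi}^2\to 0$ at infinity, and suppose toward a contradiction that $\abs{\xi(p)}^2=c_0>0$ for some $p\in M$. Exhaust $M$ by precompact open sets $\Omega_1\subseteq\Omega_2\subseteq\cdots$ with $p\in\Omega_1$ and $\bigcup_n\Omega_n=M$. By the decay hypothesis, for all sufficiently large $n$ one has $\abs{\xi}^2<c_0$ on $\partial\Omega_n$; applying the weak maximum principle to the subharmonic function $\abs{\xi}^2$ on $\Omega_n$ then gives $\sup_{\overline{\Omega_n}}\abs{\xi}^2=\sup_{\partial\Omega_n}\abs{\xi}^2<c_0$, contradicting $p\in\Omega_n$. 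Hence $\abs{\xi}^2\equiv 0$, i.e.\ $\xi=0$.

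I do not expect a genuine obstacle: the identity of \autoref{Bochner 3} does essentially all of the work, and both parts are standard consequences of the maximum principle (no smoothness of $\partial\Omega_n$ is needed, since $\abs{\xi}^2$ is smooth on all of $M$). The only point deserving a little care is the exhaustion step in part (ii) on a general, possibly incomplete or disconnected, noncompact manifold; this is handled by reading ``$\abs{\xi}^2$ decays to zero at infinity'' in the usual way---for every $\varepsilon>0$ the set $\{\,\abs{\xi}^2\geq\varepsilon\,\}$ is compact---so that the boundary values $\abs{\xi}^2|_{\partial\Omega_n}$ are indeed controlled for all large $n$.
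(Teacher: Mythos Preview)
Your argument is correct and follows exactly the same approach as the paper: the Bochner identity from \autoref{Bochner 3} shows $\abs{\xi}^2$ is subharmonic, and both parts then follow from the maximum principle. The paper states this in a single line, whereas you have spelled out the integration and exhaustion details, but the substance is identical.
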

  \begin{proof}
	Since $\frac 12\Delta\abs{\xi}^2=-\abs{\rho(\xi)\Phi}^2-\abs{\nabla_{\ad(A)}\xi}^2\leq 0$,  $\abs{\xi}^2$ is subharmonic. This implies the required assertions after applying the maximum principle.
\end{proof}
\begin{prop}
	Let $(M,g)$ be an oriented Riemannian $3$-manifold and $\fR\geq0$ (i.e. $\inp{\fR\Phi}{\Phi}\geq 0 \  \forall \Phi\in \Gamma(\bS)$. Let $(A,\xi,\Phi)$ be a a GSW Bogomolny monopole or more generally a solution of the Euler--Lagrange equations \autoref{EL eq}. Then 
		\begin{enumerate}[(i)]
		\item If  M is closed,  then $\abs{\Phi}$ is constant, or equivalently $\rho(\xi)\Phi=0$, $\nabla_A\Phi= 0$, $\mu(\Phi)=0$ and $\inp{\fR\Phi}{\Phi}=0$.
	\item If M is noncompact and $\abs{\Phi}^2$ {decays to zero at infinity}. Then $ \Phi=0$.\qedhere
		\end{enumerate}
\end{prop}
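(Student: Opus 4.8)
The plan is to follow verbatim the strategy used for the corresponding four-dimensional proposition, now invoking the first Bochner identity of \autoref{Bochner 3} in place of the identity from \autoref{Bochner1}. Since every GSW Bogomolny monopole solves the Euler--Lagrange equations \autoref{EL eq} (as recorded in the remark preceding \autoref{Bochner 3}), we may assume $(A,\xi,\Phi)$ solves \autoref{EL eq}, so that \autoref{eq Bochner 3} holds. Rearranging it gives
\[
\tfrac12\Delta\abs{\Phi}^2 = -\abs{\rho(\xi)\Phi}^2 - 2\abs{\mu(\Phi)}^2 - \abs{\nabla_A\Phi}^2 - \inp{\fR\Phi}{\Phi} \leq 0,
\]
where the inequality uses the hypothesis $\fR\geq 0$. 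Hence $\abs{\Phi}^2$ is a non-negative subharmonic function on $M$.

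For part (i), if $M$ is closed then a subharmonic function attains an interior maximum, so by the strong maximum principle $\abs{\Phi}^2$ is constant; consequently $\Delta\abs{\Phi}^2 = 0$, and since the right-hand side above is a sum of four non-positive terms, each must vanish identically. This yields $\rho(\xi)\Phi = 0$, $\mu(\Phi) = 0$, $\nabla_A\Phi = 0$ \qandq $\inp{\fR\Phi}{\Phi} = 0$, which is exactly the asserted equivalence. For part (ii), if $M$ is noncompact (complete) and $\abs{\Phi}^2$ decays to zero at infinity, then applying the maximum principle on an exhausting sequence of geodesic balls $B_R$ and letting $R\to\infty$ gives $\sup_M\abs{\Phi}^2 \leq \lim_{R\to\infty}\max_{\partial B_R}\abs{\Phi}^2 = 0$; combined with $\abs{\Phi}^2\geq 0$ this forces $\abs{\Phi}^2 \equiv 0$, i.e. $\Phi = 0$.

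I do not expect any genuine obstacle: the only points needing care are that the Bochner identity \autoref{eq Bochner 3} is valid for all solutions of \autoref{EL eq} — which is precisely the content of \autoref{Bochner 3} — and that the maximum-principle argument in part (ii) is run on a complete manifold so that geodesic balls exhaust $M$, which is part of the standing setup. The proof is essentially a two-line application of the maximum principle to $\abs{\Phi}^2$, mirroring the four-dimensional case, and contains no step that I anticipate being difficult.
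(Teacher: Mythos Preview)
Your proposal is correct and matches the paper's proof essentially verbatim: the paper also rearranges the Bochner identity \autoref{eq Bochner 3} to obtain $\tfrac12\Delta|\Phi|^2 \leq 0$ and then invokes the maximum principle to conclude both parts. Your write-up simply spells out the maximum-principle step in slightly more detail than the paper does.
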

\begin{proof}
	Since $\frac 12 \Delta \abs{\Phi}^2= -\abs{\nabla_A\Phi}^2-\abs{\rho(\xi)\Phi}^2-2\abs {\mu(\Phi) }^2-\inp{\fR\Phi}{\Phi}\leq 0$,  $\abs{\Phi}^2$ is subharmonic. This implies the required assertions after applying the maximum principle.
\end{proof}
\subsection{Frequency function and the proof of \autoref{growththeorem3} (\ref{growththeorem3_part1})}
Throughout this subsection, we impose the following standing assumption, which is a part of \autoref{growththeorem3}.
\begin{hypothesis}\label{hyp main 3d} $M=\mathbb R^3$ with the standard Euclidean metric and orientation, and the auxiliary connection $B$ is chosen so that the auxiliary curvature operator $\fR=\tilde \gamma(F_B)\in \End(\bS)$ vanishes.
\end{hypothesis}

 Let $(A, \xi, \Phi)$ be a solution to the generalized Seiberg--Witten Bogomolny equations \autoref{eq GSWb 3d formal}, or more generally, to the Euler--Lagrange equations \autoref{EL eq} associated with the Yang--Mills--Higgs energy functional $\sE_3$.  Denote by $r$ the radial distance function from the origin in $\mathbb R^3$. \autoref{growththeorem3}~\autoref{growththeorem3_part1} concerns the asymptotic behavior of the $L^2$-norm of $\Phi$ averaged over spheres of radius $r$ as $r \to \infty$. To investigate this behavior, we employ the frequency function approach as discussed in \autoref{frequency_4d} adapted to three dimensions. Our treatment is again closely follows the approach in \cite{Walpuski2019}.

\begin{definition} Denote by $B_{r}$  the open ball in $\mathbb R^3$ centered at $0$. 
	\begin{enumerate}[i)]
		\item For every $r>0$ we define 
		$$m(r):=\frac 1{r^2}\int_{\partial B_r}\abs{\Phi}^2\ \ \  \text{and}\ \ \ D(r):=\frac 1{r}\int_{B_r}\abs{\nabla_A \Phi}^2+2\abs{\mu(\Phi)}^2+\abs{\rho(\xi)\Phi}^2.$$
		\item Set $r_{-1}:= \sup \{0,r: r\in (0,\infty):m(r)=0\}$. The \textbf{frequency function} $N: (r_{-1},\infty)\to [0,\infty)$ is defined by 
		\[N(r):=\frac{D(r)}{m(r)}=\frac {r\int_{B_r}\abs{\nabla_A \Phi}^2+2\abs{\mu(\Phi)}^2+\abs{\rho(\xi)\Phi}^2 }{\int_{\partial B_r}\abs{\Phi}^2}.\qedhere\]
	\end{enumerate}
\end{definition}
Our objective is again to analyze the monotonicity behavior of $N(r)$ , and for that, we need to compute its derivative, $N^\prime(r)$. To begin, we first calculate the derivative of the squared $L^2$-norm average of $\Phi$, $m(r)$ as follows:
\begin{prop}\label{prop m prime 3d}For every $r>0$,
\[m^\prime(r)=\frac {2D(r)}r.\]
\end{prop}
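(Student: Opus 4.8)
The plan is to mimic exactly the computation of \autoref{prop m prime} from the four-dimensional case, adjusting the exponents for the fact that we are now in $\mathbb{R}^3$, where $\partial B_r$ has $2$-dimensional volume scaling like $r^2$ and the normalization in $m(r)$ is $r^{-2}$ rather than $r^{-3}$. First I would differentiate $m(r) = r^{-2}\int_{\partial B_r}|\Phi|^2$ directly, using the product rule together with the co-area/coarea-type formula
\[
\frac{d}{dr}\int_{\partial B_r}|\Phi|^2 = \int_{\partial B_r}\frac{2}{r}|\Phi|^2 + \int_{\partial B_r}\partial_r|\Phi|^2,
\]
where the first term accounts for the scaling of the spherical measure on $\partial B_r\subset\mathbb{R}^3$ (the mean curvature factor $2/r$). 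This gives
\[
m'(r) = -\frac{2}{r^3}\int_{\partial B_r}|\Phi|^2 + \frac{1}{r^2}\Big(\int_{\partial B_r}\frac{2}{r}|\Phi|^2 + \int_{\partial B_r}\partial_r|\Phi|^2\Big) = \frac{1}{r^2}\int_{\partial B_r}\partial_r|\Phi|^2,
\]
so the two scaling terms cancel and only the radial-derivative term survives.

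Next I would convert the boundary integral $\int_{\partial B_r}\partial_r|\Phi|^2$ into a bulk integral over $B_r$ by the divergence theorem, writing $\partial_r|\Phi|^2 = \partial_\nu|\Phi|^2$ on $\partial B_r$ and hence $\int_{\partial B_r}\partial_r|\Phi|^2 = \int_{B_r}\Delta|\Phi|^2$. At this point I invoke the Bochner identity \autoref{Bochner 3} (the first displayed equation, \autoref{eq Bochner 3}), which under \autoref{hyp main 3d} (so $\fR = 0$, hence $\inp{\fR\Phi}{\Phi}=0$) reads
\[
\tfrac12\Delta|\Phi|^2 = -|\nabla_A\Phi|^2 - 2|\mu(\Phi)|^2 - |\rho(\xi)\Phi|^2.
\]
Therefore $\int_{B_r}\Delta|\Phi|^2 = -2\int_{B_r}|\nabla_A\Phi|^2 + 2|\mu(\Phi)|^2 + |\rho(\xi)\Phi|^2$. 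Plugging this back in,
\[
m'(r) = \frac{1}{r^2}\cdot\Big(-2\int_{B_r}|\nabla_A\Phi|^2 + 2|\mu(\Phi)|^2 + |\rho(\xi)\Phi|^2\Big),
\]
wait — I must be careful with the sign: $\Delta|\Phi|^2$ here denotes the analyst's (negative-spectrum) Laplacian or the geometer's? Comparing with the four-dimensional \autoref{prop m prime}, where the same manipulation produced $m'(r) = \tfrac{2}{r^3}\int_{B_r}|\nabla_A\Phi|^2 + 2|\mu(\Phi)|^2$, the convention is such that the final answer is $+2D(r)/r$. So with $D(r) = r^{-1}\int_{B_r}|\nabla_A\Phi|^2 + 2|\mu(\Phi)|^2 + |\rho(\xi)\Phi|^2$ as defined, I get $m'(r) = \tfrac{2}{r^2}\int_{B_r}(\cdots) = \tfrac{2}{r}D(r)$, exactly as claimed.

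The only genuine subtlety — and the step I would watch most carefully — is bookkeeping the sign conventions for the Laplacian and making sure the mean-curvature/scaling factor is $2/r$ in dimension three (it was $3/r$ in the four-dimensional proof of \autoref{prop m prime}), so that the two terms arising from differentiating the prefactor $r^{-2}$ and from differentiating the measure on $\partial B_r$ cancel precisely. Everything else is a verbatim adaptation of \autoref{prop m prime}, using \autoref{Bochner 3} in place of \autoref{Bochner1}. The proof is therefore a direct computation, which I would present in an \texttt{align*} block parallel to the one in \autoref{prop m prime}.
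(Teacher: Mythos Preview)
Your proposal is correct and is essentially identical to the paper's own proof: differentiate the product $r^{-2}\int_{\partial B_r}|\Phi|^2$, use the mean-curvature factor $2/r$ for $\partial B_r\subset\mathbb R^3$ so that the scaling terms cancel, and then convert $\int_{\partial B_r}\partial_r|\Phi|^2$ into $2\int_{B_r}\big(|\nabla_A\Phi|^2+2|\mu(\Phi)|^2+|\rho(\xi)\Phi|^2\big)$ via the divergence theorem and the Bochner identity \autoref{eq Bochner 3} under \autoref{hyp main 3d}. The paper's proof is just the terse \texttt{align*} version of exactly this computation.
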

\begin{proof}The proof is again a direct computation.
	\begin{align*}
	m^\prime(r)&= \frac 1{ r^2} \frac {d}{dr}\int_{\partial B_r}\abs{\Phi}^2-\frac 2{r^3} \int_{\partial B_r}\abs{\Phi}^2\\
	&=\frac 1{r^2} (\int_{\partial B_r}\frac 2r\abs{\Phi}^2+ \int_{\partial B_r}\partial_r\abs{\Phi}^2)-\frac 2{r^3} \int_{\partial B_r}\abs{\Phi}^2=\frac 2r D(r).\qedhere
	\end{align*}
\end{proof}
\begin{cor}\label{cor m prime 3d}We have
\begin{enumerate}[a)]
		\item $m^\prime(r)\geq0, \forall r\in (0,\infty)$, and if $\Phi\neq 0$ then $r_{-1}=0$,
		\item for every $r\in (r_{-1},\infty)$, \[m^\prime(r)=\frac {2N(r)}r m(r).\qedhere\]
	\end{enumerate}
\end{cor}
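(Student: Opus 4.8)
The plan is to obtain both parts directly from \autoref{prop m prime 3d}, invoking the Bochner identity of \autoref{Bochner 3} only for the unique-continuation assertion about $r_{-1}$, and keeping in mind the standing \autoref{hyp main 3d}, which forces $\fR=0$.

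For part (a), I would first note that \autoref{prop m prime 3d} gives $m^\prime(r)=2D(r)/r$, and that $D(r)$ is the integral over $B_r$ of the pointwise non-negative quantity $\abs{\nabla_A\Phi}^2+2\abs{\mu(\Phi)}^2+\abs{\rho(\xi)\Phi}^2$; since $r>0$, this immediately yields $m^\prime(r)\geq 0$ for every $r\in(0,\infty)$, so $m$ is non-decreasing. For the assertion that $\Phi\neq 0$ implies $r_{-1}=0$, I would argue by contradiction. If $r_{-1}>0$, then by the definition of $r_{-1}$ as the supremum of $\{0\}$ together with the set of radii on which $m$ vanishes, there is some $r_0>0$ with $m(r_0)=0$; by monotonicity and $m\geq 0$ this forces $m\equiv 0$ on $(0,r_0]$, hence $\Phi\equiv 0$ on the closed ball $\overline{B_{r_0}}$ (using continuity of $\Phi$ at the origin). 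Under \autoref{hyp main 3d} the Bochner identity \autoref{eq Bochner 3} reduces to $\frac12\Delta\abs{\Phi}^2=-\abs{\nabla_A\Phi}^2-\abs{\rho(\xi)\Phi}^2-2\abs{\mu(\Phi)}^2\leq 0$, so $\abs{\Phi}^2$ is a smooth, non-negative, superharmonic function on the connected manifold $\mathbb R^3$ attaining its minimum value $0$ at interior points; the strong minimum principle then gives $\abs{\Phi}^2\equiv 0$, contradicting $\Phi\neq 0$. (Alternatively, one can apply Aronszajn's unique continuation theorem to the elliptic equation $\nabla_A^*\nabla_A\Phi=\rho(\xi)^2\Phi-\tilde\gamma(\mu(\Phi))\Phi$ coming from \autoref{EL eq}, using that $\Phi$ vanishes on the open ball $B_{r_0}$.)

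For part (b), I would observe that for $r>r_{-1}$ one has $m(r)>0$ — immediate from the definition of $r_{-1}$ as a supremum together with $m\geq 0$ — so that $N(r)=D(r)/m(r)$ is well defined there, and then \autoref{prop m prime 3d} gives directly
\[ m^\prime(r)=\frac{2D(r)}{r}=\frac{2}{r}\cdot\frac{D(r)}{m(r)}\cdot m(r)=\frac{2N(r)}{r}\,m(r). \]
I do not expect a genuine obstacle here: the only step carrying any mathematical content is the unique-continuation argument ruling out a nonzero spinor that vanishes on a ball, and that is a routine consequence of the maximum principle once \autoref{hyp main 3d} is in force; everything else is bookkeeping on top of \autoref{prop m prime 3d}, exactly parallel to the four-dimensional \autoref{cor m prime}.
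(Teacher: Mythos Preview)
Your bookkeeping in parts (a) and (b) is fine and matches what the paper intends (the paper states this corollary without proof, as an immediate consequence of \autoref{prop m prime 3d}). The issue is in your primary argument for the unique-continuation claim. In the paper's sign convention $\Delta=\nabla^*\nabla$ is the positive Laplacian, so the Bochner identity $\frac12\Delta\abs{\Phi}^2\leq 0$ makes $\abs{\Phi}^2$ \emph{subharmonic}, not superharmonic (the paper says this explicitly in the paragraph following \autoref{Bochner1} and its three-dimensional analogue). Subharmonic functions obey a strong \emph{maximum} principle, not a minimum principle; a non-negative subharmonic function can certainly attain the value $0$ on an interior ball without being identically zero, so your minimum-principle step does not go through.

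Your parenthetical alternative, however, is correct and is the right way to close the gap: once $\Phi$ vanishes on an open ball, Aronszajn's unique continuation theorem applied to the second-order elliptic equation $\nabla_A^*\nabla_A\Phi=\rho(\xi)^2\Phi-\tilde\gamma(\mu(\Phi))\Phi$ from \autoref{EL eq} forces $\Phi\equiv 0$ on all of $\mathbb R^3$. So drop the minimum-principle paragraph and promote the Aronszajn argument to the main line; everything else stands.
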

Since $N(r)$ is the quotient of $D(r)$ and $m(r)$, we must also compute the derivative of $D(r)$. The following proposition provides the derivative:
\begin{prop}\label{Dprime3d}For every $r>0$,
	\[D^\prime(r)+\frac 1{r}\int_{\partial B_r}\abs{F_{\ad(A)}}^2+\abs{\nabla_{\ad(A)}\xi}^2-\abs{\mu(\Phi)}^2+\frac 1{r^2}\int_{B_r}\abs{F_{\ad(A)}}^2-\abs{\mu(\Phi)}^2\geq 0.\]
	\end{prop}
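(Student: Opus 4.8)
The plan is to imitate the proof of \autoref{propDprime4d}: express $D'(r)$ through a divergence identity for a divergence-free symmetric $(0,2)$-tensor $T$ on $\mathbb R^3$, and then discard the terms of favourable sign to land on the stated inequality. (In dimension three some of the terms produced by the identity fail to have a fixed sign, which is why one gets an inequality here and not an equality as in four dimensions.) For the tensor I would take the reduction to $M=\mathbb R^3$ of the four-dimensional tensor of \autoref{def T4d} on the cylinder $\mathbb R\times M$, i.e. $T=T_1+T_2+T_3$ with
\[
\begin{aligned}
T_1(v,w)&=\inp{\nabla_{A,v}\Phi}{\nabla_{A,w}\Phi}-\tfrac12\inp{v}{w}(\abs{\nabla_A\Phi}^2+\abs{\rho(\xi)\Phi}^2),\\
2T_2(v,w)&=\inp{\iota_vF_{\ad(A)}}{\iota_wF_{\ad(A)}}+\inp{\nabla_{\ad(A),v}\xi}{\nabla_{\ad(A),w}\xi}-\tfrac12\inp{v}{w}(\abs{F_{\ad(A)}}^2+\abs{\nabla_{\ad(A)}\xi}^2),\\
T_3(v,w)&=-\tfrac12\inp{v}{w}\abs{\mu(\Phi)}^2.
\end{aligned}
\]
Using $\sum_i\abs{\iota_{e_i}F_{\ad(A)}}^2=2\abs{F_{\ad(A)}}^2$ and $\dim M=3$ one finds $\tr(T)=-\tfrac12\abs{\nabla_A\Phi}^2-\tfrac32\abs{\rho(\xi)\Phi}^2+\tfrac14\abs{F_{\ad(A)}}^2-\tfrac14\abs{\nabla_{\ad(A)}\xi}^2-\tfrac32\abs{\mu(\Phi)}^2$; this is the tensor of \autoref{def T3d}.

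The first genuine task is to prove $\nabla^*T=0$, which will be \autoref{lem3d div T is 0}. I would carry out the computation of \autoref{lem div T is 0} in the three-dimensional setting, feeding in the Euler--Lagrange equations \autoref{EL eq}, the Bianchi identity, and \autoref{prop identity with d, d* dim 3}. Relative to four dimensions the new ingredients are the term $\rho(\xi)^2\Phi$ in the spinor equation and the term $[d_{\ad(A)}\xi,\xi]$ in the curvature equation; their contributions to $\nabla^*T_1$ and $\nabla^*T_2$ have to be matched, respectively, against $\tfrac12\nabla_i\abs{\rho(\xi)\Phi}^2$ and $\tfrac12\nabla_i\abs{\nabla_{\ad(A)}\xi}^2$ arising from the trace parts of $T_1$ and $T_2$. (A quick consistency check is that $T$ is the restriction to $M$ of the four-dimensional tensor of \autoref{def T4d} attached to the dimensional reduction $(\bA,\pi^*\Phi)$ on $\mathbb R\times M$, which is divergence-free by \autoref{lem div T is 0}.)

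Granting $\nabla^*T=0$, integrating $0=\int_{B_r}\inp{\nabla^*T}{dr^2}$ by parts exactly as in the proof of \autoref{propDprime4d} gives $r\int_{\partial B_r}T(\partial_r,\partial_r)=\int_{B_r}\tr(T)$. Expanding $T(\partial_r,\partial_r)$ and $\tr(T)$ with the formulas above, and combining with the elementary identity $r^2D'(r)=-\int_{B_r}(\abs{\nabla_A\Phi}^2+2\abs{\mu(\Phi)}^2+\abs{\rho(\xi)\Phi}^2)+r\int_{\partial B_r}(\abs{\nabla_A\Phi}^2+2\abs{\mu(\Phi)}^2+\abs{\rho(\xi)\Phi}^2)$, a short bookkeeping shows that the left-hand side of \autoref{Dprime3d} is equal to $\tfrac1r\int_{\partial B_r}(2\abs{\nabla_{A,\partial_r}\Phi}^2+\abs{\iota_{\partial_r}F_{\ad(A)}}^2+\abs{\nabla_{\ad(A),\partial_r}\xi}^2+\tfrac12\abs{F_{\ad(A)}}^2+\tfrac12\abs{\nabla_{\ad(A)}\xi}^2)+\tfrac1{r^2}\int_{B_r}(2\abs{\rho(\xi)\Phi}^2+\tfrac12\abs{F_{\ad(A)}}^2+\tfrac12\abs{\nabla_{\ad(A)}\xi}^2)$, which is manifestly $\geq0$; dropping this nonnegative quantity proves \autoref{Dprime3d} (and keeping it gives a slightly sharper statement).

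The main obstacle is the interplay between the choice of $T$ and the divergence computation: the coefficients have to be arranged so that $\nabla^*T=0$ \emph{and} the leftover in the last step is a sum of squares. The cross-terms generated by $\rho(\xi)^2\Phi$ and by $[d_{\ad(A)}\xi,\xi]$ must cancel precisely, and it is exactly the three-dimensional value $\tr(T_2)=\tfrac14\abs{F_{\ad(A)}}^2-\tfrac14\abs{\nabla_{\ad(A)}\xi}^2$ (which vanishes in four dimensions) that forces an inequality here and dictates the shape of the correction terms appearing in the statement.
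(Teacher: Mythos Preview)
Your overall strategy is right and matches the paper: construct a divergence-free stress tensor $T$, integrate $\inp{\nabla^*T}{dr^2}$ over $B_r$, and discard nonnegative leftovers. The problem is the normalization of your curvature block. You take
\[
2T_2(v,w)=\inp{\iota_vF_{\ad(A)}}{\iota_wF_{\ad(A)}}+\inp{\nabla_{\ad(A),v}\xi}{\nabla_{\ad(A),w}\xi}-\tfrac12\inp{v}{w}(\abs{F_{\ad(A)}}^2+\abs{\nabla_{\ad(A)}\xi}^2),
\]
carrying over the factor $\tfrac12$ from the four-dimensional \autoref{def T4d}. But the paper's three-dimensional tensor (\autoref{def T3d}) has \emph{no} factor $\tfrac12$ on these terms, and this is forced by the Euler--Lagrange equations: in three dimensions $\sE_3$ weights $\abs{F_{\ad(A)}}^2$ with coefficient $1$ (not $\tfrac12$ as in $\sE_4$), so the curvature equation in \autoref{EL eq} reads $d_{\ad(A)}^*F_{\ad(A)}=[d_{\ad(A)}\xi,\xi]-\rho^*((\nabla_A\Phi)\Phi^*)$ with coefficient $-1$ on the spinor term (versus $-2$ in \autoref{eq:EL4}). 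When you compute $\nabla^*T_1$, the term $\inp{\rho^*((\nabla_A\Phi)\Phi^*)}{\iota_{e_i}F_{\ad(A)}}$ appears with coefficient $+1$, and it must be cancelled by the same term with coefficient $-1$ coming from $\nabla^*T_2$; your $T_2$ produces only $-\tfrac12$. So your $T$ is \emph{not} divergence-free for general solutions of \autoref{EL eq}, and your claimed identity for the left-hand side (with the extra $\tfrac12\abs{F_{\ad(A)}}^2+\tfrac12\abs{\nabla_{\ad(A)}\xi}^2$ terms) is incorrect.

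Your dimensional-reduction consistency check is exactly what misleads here: the three-dimensional Euler--Lagrange equations \autoref{EL eq} are \emph{not} the reduction of the four-dimensional ones \autoref{eq:EL4}, because $\sE_3$ and $\sE_4$ weight the curvature differently. The check does go through for genuine GSW Bogomolny monopoles (which lift to four-dimensional GSW monopoles, hence to solutions of \autoref{eq:EL4}), but \autoref{Dprime3d} is stated for all solutions of \autoref{EL eq}. The fix is simply to drop the factor $\tfrac12$ in your $T_2$; then the bookkeeping yields the paper's (correct) residual
\[
\frac{2}{r}\int_{\partial B_r}\!\bigl(\abs{\nabla_{A,\partial_r}\Phi}^2+\abs{\iota_{\partial_r}F_{\ad(A)}}^2+\abs{\nabla_{\ad(A),\partial_r}\xi}^2\bigr)+\frac{1}{r^2}\int_{B_r}\!\bigl(\abs{\nabla_{\ad(A)}\xi}^2+2\abs{\rho(\xi)\Phi}^2\bigr)\geq 0.
\]
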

	To prove this proposition, we require a lemma about the divergence-free property of a certain symmetric $(0,2)$ tensor field $T$, similar to the approach by \citet[Proof of Lemma 5.2]{Taubes2012}.
\begin{definition}\label{def T3d}	
	The symmetric $(0,2)$ tensor $T$ is defined by $T:=T_1+T_{2}+T_3+T_4$ where
\begin{align*}T_1(v,w)=\inp{\nabla_{A,v}\Phi}{\nabla_{A,w}\Phi}-\frac 12\inp{v}{w}\abs{\nabla_A \Phi}^2 ,\\
	T_2(v,w)=\inp{\iota_vF_{\ad(A)}}{\iota_wF_{\ad(A)}}-\frac12 \inp{v}{w}\abs{F_{\ad(A)}}^2,\\
		T_3(v,w)=\inp{\nabla_{\ad(A),v}\xi}{\nabla_{\ad(A),w}\xi}-\frac 12\inp{v}{w}\abs{\nabla_{\ad(A)} \xi}^2, \\
		T_4(v,w)=	-\frac12 \inp{v}{w}\abs{\rho(\xi)\Phi}^2-\frac12 \inp{v}{w}\abs{\mu(\Phi)}^2.
  \end{align*}
  Note that $2\tr (T)=-\abs{\nabla_A \Phi}^2+\abs{F_{\ad(A)}}^2-\abs{\nabla_{\ad(A)} \xi}^2-3\abs{\rho(\xi)\Phi}^2-3\abs{\mu(\Phi)}^2.$
\end{definition}

\begin{lemma}\label{lem3d div T is 0}
The divergence of $T$ is given by
	\[\nabla^*T=0.\]
\end{lemma}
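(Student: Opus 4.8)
The plan is to mimic the proof of \autoref{lem div T is 0} in the present three–dimensional setting; the only genuinely new ingredients are the Higgs field $\xi$, which contributes the block $T_3$ and the extra term $-\frac12\inp{v}{w}\abs{\rho(\xi)\Phi}^2$ in $T_4$, and the fact that the coefficient of $T_2$ here is $1$ rather than $\frac12$. As there, I would fix a point $p\in\mathbb R^3$ together with an oriented orthonormal frame $\{e_i\}$ near $p$ satisfying $\nabla_{e_i}e_j(p)=0$, compute $(\nabla^*T_\ell)(e_i)$ for $\ell=1,2,3,4$ one at a time, and then observe that the resulting terms cancel in pairs. Throughout I would use the Euler--Lagrange equations \autoref{EL eq} (with $\fR=0$ by \autoref{hyp main 3d}), the second Bianchi identity $d_{\ad(A)}F_{\ad(A)}=0$, the defining property of the moment map $\mu$, the skew-Hermitian property of $\rho(\xi)$ together with the Leibniz rule $\nabla_{A,e_i}(\rho(\xi)\Phi)=\rho(\nabla_{\ad(A),e_i}\xi)\Phi+\rho(\xi)\nabla_{A,e_i}\Phi$, the $\ad$-invariance of the fibre inner product, and the definition of $\rho^*$.

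For $T_1$ one commutes covariant derivatives and substitutes the third equation of \autoref{EL eq}, $\nabla_A^*\nabla_A\Phi=\rho(\xi)^2\Phi-\tilde\gamma(\mu(\Phi))\Phi$. Exactly as in \autoref{lem div T is 0}, $\inp{\tilde\gamma(\mu(\Phi))\Phi}{\nabla_{A,e_i}\Phi}=\inp{\mu(\Phi)}{\nabla_{\ad(A),e_i}\mu(\Phi)}=\frac12\nabla_i\abs{\mu(\Phi)}^2$ and the curvature-commutator term becomes $\inp{\rho^*((\nabla_A\Phi)\Phi^*)}{\iota_{e_i}F_{\ad(A)}}$, while the new term is handled by writing $\inp{\rho(\xi)^2\Phi}{\nabla_{A,e_i}\Phi}=-\frac12\nabla_i\abs{\rho(\xi)\Phi}^2+\inp{\rho^*((\rho(\xi)\Phi)\Phi^*)}{\nabla_{\ad(A),e_i}\xi}$ using skew-Hermiticity and the Leibniz rule. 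Thus, up to signs to be pinned down when writing it out,
\[(\nabla^*T_1)(e_i)=-\frac12\nabla_i\abs{\rho(\xi)\Phi}^2-\frac12\nabla_i\abs{\mu(\Phi)}^2+\inp{\rho^*((\rho(\xi)\Phi)\Phi^*)}{\nabla_{\ad(A),e_i}\xi}+\inp{\rho^*((\nabla_A\Phi)\Phi^*)}{\iota_{e_i}F_{\ad(A)}}.\]

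For $T_2$ the computation is identical to that in \autoref{lem div T is 0} (the factor $2$ appearing there exactly compensates for the coefficient $1$ here); combining the Bianchi identity with the first equation of \autoref{EL eq}, $d^*_{\ad(A)}F_{\ad(A)}=[d_{\ad(A)}\xi,\xi]-\rho^*((\nabla_A\Phi)\Phi^*)$, gives $(\nabla^*T_2)(e_i)=\inp{[d_{\ad(A)}\xi,\xi]}{\iota_{e_i}F_{\ad(A)}}-\inp{\rho^*((\nabla_A\Phi)\Phi^*)}{\iota_{e_i}F_{\ad(A)}}$. For $T_3$ one repeats the $T_1$ argument with $(\Phi,\nabla_A)$ replaced by $(\xi,\nabla_{\ad(A)})$: the leading term is $\inp{\nabla_{\ad(A)}^*\nabla_{\ad(A)}\xi}{\nabla_{\ad(A),e_i}\xi}=\inp{\Delta_{\ad(A)}\xi}{\nabla_{\ad(A),e_i}\xi}=-\inp{\rho^*((\rho(\xi)\Phi)\Phi^*)}{\nabla_{\ad(A),e_i}\xi}$, which is legitimate on flat $\mathbb R^3$ where there is no Weitzenb\"ock correction at form-degree zero, and the curvature-commutator term $\sum_j\inp{\nabla_{\ad(A),e_j}\xi}{[F_{\ad(A)}(e_i,e_j),\xi]}$ equals $-\inp{[d_{\ad(A)}\xi,\xi]}{\iota_{e_i}F_{\ad(A)}}$ after invoking $\ad$-invariance, so $(\nabla^*T_3)(e_i)=-\inp{\rho^*((\rho(\xi)\Phi)\Phi^*)}{\nabla_{\ad(A),e_i}\xi}-\inp{[d_{\ad(A)}\xi,\xi]}{\iota_{e_i}F_{\ad(A)}}$. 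Finally $(\nabla^*T_4)(e_i)=\frac12\nabla_i\abs{\rho(\xi)\Phi}^2+\frac12\nabla_i\abs{\mu(\Phi)}^2$ directly, since the divergence of $v\mapsto-\frac12\inp{v}{v}h$ is $\frac12\,dh$. Summing the four expressions, the $\nabla_i\abs{\rho(\xi)\Phi}^2$ and $\nabla_i\abs{\mu(\Phi)}^2$ terms of $T_1$ cancel against $T_4$, the $\rho^*((\nabla_A\Phi)\Phi^*)$ terms of $T_1$ and $T_2$ cancel, the $\rho^*((\rho(\xi)\Phi)\Phi^*)$ terms of $T_1$ and $T_3$ cancel, and the $[d_{\ad(A)}\xi,\xi]$ terms of $T_2$ and $T_3$ cancel, so $\nabla^*T=0$.

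The substance of the argument is the bookkeeping of signs and coefficients, and the two cancellations with no four-dimensional counterpart are the ones I would check most carefully: the $\rho(\xi)^2\Phi$ contribution to $\nabla^*T_1$ against the $\Delta_{\ad(A)}\xi$ term in $\nabla^*T_3$, which relies on the second equation of \autoref{EL eq}, and the Yang--Mills term $[d_{\ad(A)}\xi,\xi]$ in $\nabla^*T_2$ against the curvature-commutator term in $\nabla^*T_3$, which relies on $\ad$-invariance together with the first equation of \autoref{EL eq}. In particular it is the full Euler--Lagrange system \autoref{EL eq}, and not merely the GSW Bogomolny equations \autoref{GSWb eq}, that makes these cancellations go through.
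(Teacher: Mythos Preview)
Your proposal is correct and follows essentially the same approach as the paper: fix a normal frame at a point, compute $(\nabla^*T_\ell)(e_i)$ for $\ell=1,2,3,4$ using the Euler--Lagrange equations \autoref{EL eq}, the Bianchi identity, and $\ad$-invariance, and verify the pairwise cancellations exactly as you describe. The only cosmetic difference is that the paper writes the cross term as $\inp{\rho(\xi)\Phi}{\rho(\nabla_i\xi)\Phi}$ rather than your equivalent $\inp{\rho^*((\rho(\xi)\Phi)\Phi^*)}{\nabla_{\ad(A),e_i}\xi}$.
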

\begin{proof}
	Let $p\in \mathbb R^3$ and $\{e_i\}$ be an oriented orthonormal frame around $p$ such that $\nabla_{e_i}e_j(p)=0$.
	\begin{align*}
	(\nabla^*T_1)(e_i)
	&=-\sum_j\inp{\nabla_j \nabla_j\Phi}{\nabla_i\Phi}+\inp{\nabla_j\Phi}{\nabla_j \nabla_i\Phi}-\inp{\nabla_j\Phi}{\nabla_i \nabla_j\Phi}\\
	&= \inp{\nabla_A^*\nabla_A\Phi}{\nabla_i\Phi}+\sum_j\inp{\nabla_j\Phi}{F_{\ad(A)}(e_i,e_j)\Phi}\\
	&=\inp{\rho(\xi)^2\Phi}{\nabla_i\Phi}-\inp{\tilde \gamma(\mu(\Phi))\Phi }{\nabla_i\Phi}+\sum_j\inp{\nabla_j\Phi}{\rho(F_{\ad(A)}(e_i,e_j))\Phi}\\
	&=\inp{\rho(\xi)^2\Phi}{\nabla_i\Phi}-\inp{\mu(\Phi)}{\nabla_{ad(A),e_i}\mu(\Phi)}+\sum_j\inp{\rho^*((\nabla_j\Phi)\Phi^*)}{F_{\ad(A)}(e_i,e_j)}\\
	&=\inp{\rho(\xi)\Phi}{\rho(\nabla_i\xi)\Phi}-\frac 12\nabla_i\abs{\rho(\xi)\Phi}^2-\frac 12\nabla_i\abs{\mu(\Phi)}^2+\inp{\rho^*((\nabla_A\Phi)\Phi^*)}{\iota_{e_i}F_{\ad(A)}},
	\end{align*}
		\begin{align*}
	&(\nabla^*T_2)(e_i)\\
	&=-\sum_j\inp{\nabla_j\iota_{e_i}F_{\ad(A)}}{\iota_{e_j}F_{\ad(A)}}+\inp{\nabla_j\iota_{e_j}F_{\ad(A)}}{\iota_{e_i}F_{\ad(A)}}+\frac 12 \nabla_i\abs{F_{\ad(A)}}^2\\
	&=-\sum_j\inp{{e_j}\w\iota_{e_i}\nabla_jF_{\ad(A)}}{F_{\ad(A)}}+\inp{\iota_{e_j}\nabla_jF_{\ad(A)}}{\iota_{e_i}F_{\ad(A)}}+\frac 12\nabla_i\abs{F_{\ad(A)}}^2\\
	&=\sum_j\inp{\iota_{e_i}{e_j}\w\nabla_jF_{\ad(A)}}{F_{\ad(A)}}-\inp{\nabla_iF_{\ad(A)}}{F_{\ad(A)}}+\inp{d_A^*F_{\ad(A)}}{\iota_{e_i}F_{\ad(A)}}+\frac 12\nabla_i\abs{F_{\ad(A)}}^2\\
	&=\sum_j\inp{\iota_{e_i}d_{\ad(A)} F_{\ad(A)}}{F_{\ad(A)}}+\inp{d_{\ad(A)}^*F_{\ad(A)}}{\iota_{e_i}F_{\ad(A)}}\\
	&=\inp{[d_{\ad(A)}\xi,\xi]}{\iota_{e_i}F_{\ad(A)}}-\inp{\rho^*((\nabla_A\Phi)\Phi^*)}{\iota_{e_i}F_{\ad(A)}},
\end{align*}
and
	\begin{align*}
(\nabla^*T_3)(e_i)
&=-\sum_j\inp{\nabla_j \nabla_j\xi}{\nabla_i\xi}+\inp{\nabla_j\xi}{\nabla_j \nabla_i\xi}-\inp{\nabla_j\xi}{\nabla_i \nabla_j\xi}\\
&= \inp{\nabla_{\ad(A)}^*\nabla_{\ad(A)}\xi}{\nabla_i\xi}+\sum_j\inp{\nabla_j\xi}{[F_{\ad(A)}(e_i,e_j),\xi]}\\
&=- \inp{\rho^*((\rho(\xi)\Phi)\Phi^*)}{\nabla_i\xi}-\inp{[d_{\ad(A)}\xi,\xi]}{\iota_{e_i}F_{\ad(A)}}\\
&=-\inp{\rho(\xi)\Phi}{\rho(\nabla_i\xi)\Phi}-\inp{[d_{\ad(A)}\xi,\xi]}{\iota_{e_i}F_{\ad(A)}}.
\end{align*}
Since $\nabla^*T_4(e_i)=\frac 12\nabla_i\abs{\rho(\xi)\Phi}^2+\frac 12\nabla_i\abs{\mu(\Phi)}^2$, we have $\nabla^*T=0$.
\end{proof}

\begin{proof}[{Proof of \autoref{Dprime3d}}]
	We have $$D^\prime(r) =-\frac 1{r} D(r)+ \frac 1{r}\int_{\partial B_r}\abs{\nabla_A \Phi}^2+2\abs{\mu(\Phi)}^2+\abs{\rho(\xi)\Phi}^2 .$$ Now
	\begin{align*} 
	0&=\int_{B_r}\inp{\nabla^*T}{dr^2}\\
	&=-2r\int_{\partial B_r}T(\partial_r,\partial_r)+\int_{B_r}2\tr(T)\\
	&=-2r\int_{\partial B_r}\abs{\nabla_{A,\partial_r} \Phi}^2+\abs{\iota(\partial_r)F_{\ad(A)}}^2+\abs{\nabla_{\ad (A),\partial_r} \xi}^2\\
	&+r\int_{\partial B_r}\abs{\nabla_A \Phi}^2+\abs{F_{\ad(A)}}^2+\abs{\nabla_{\ad(A)}\xi}^2+\abs{\mu(\Phi)}^2+\abs{\rho(\xi)\Phi}^2 +\int_{B_r}2\tr(T)\\
	&=-2r\int_{\partial B_r}\abs{\nabla_{A,\partial_r} \Phi}^2+\abs{\iota(\partial_r)F_{\ad(A)}}^2+\abs{\nabla_{\ad(A),\partial_r} \xi}^2+r^2D^\prime(r)\\
	& +r\int_{\partial B_r}\abs{F_{\ad(A)}}^2+\abs{\nabla_{\ad(A)}\xi}^2-\abs{\mu(\Phi)}^2 +\int_{B_r}\abs{F_{\ad(A)}}^2-\abs{\nabla_{\ad(A)} \xi}^2-2\abs{\rho(\xi)\Phi}^2-\abs{\mu(\Phi)}^2.\qedhere
	\end{align*}
\end{proof}
A computation analogous to the one used earlier in the proof of \autoref{prop N prime } yields the following derivative estimate of $N(r)$.
\begin{prop}\label{prop N prime 3d}For all $r>r_{-1}$ we have
\[N^\prime(r)+\frac 1{rm(r)}\int_{\partial B_r}\abs{F_{\ad(A)}}^2+\abs{\nabla_A\xi}^2-\abs{\mu(\Phi)}^2+\frac 1{r^2m(r)}\int_{B_r}\abs{F_{\ad(A)}}^2-\abs{\mu(\Phi)}^2\geq 0.\]
\end{prop}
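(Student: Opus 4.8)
The plan is to repeat, in dimension three, the computation behind \autoref{prop N prime } while keeping careful track of the different powers of $r$. The starting point is a boundary expression for $D(r)$: integrating the Bochner identity \eqref{eq Bochner 3} over $B_r$ (here $\fR=0$ by \autoref{hyp main 3d}) and integrating by parts gives
\[
rD(r)=\int_{B_r}\abs{\nabla_A\Phi}^2+2\abs{\mu(\Phi)}^2+\abs{\rho(\xi)\Phi}^2=\int_{\partial B_r}\inp{\nabla_{A,\partial_r}\Phi}{\Phi},
\]
exactly as in the four-dimensional case. Together with $\int_{\partial B_r}\abs{\Phi}^2=r^2m(r)$ this yields $\int_{\partial B_r}\inp{\nabla_{A,\partial_r}\Phi}{\Phi}=rN(r)m(r)$, the two boundary identities that drive the argument.

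Next I would differentiate $N=D/m$ and invoke \autoref{prop m prime 3d} (that is, $m^\prime(r)=2D(r)/r$) to obtain $N^\prime(r)=D^\prime(r)/m(r)-2N(r)^2/r$. For $D^\prime(r)$ I would use not the inequality recorded in the statement of \autoref{Dprime3d}, but the exact identity its proof furnishes, namely that $D^\prime(r)$ equals
\[
\frac 2r\int_{\partial B_r}\Big(\abs{\nabla_{A,\partial_r}\Phi}^2+\abs{\iota(\partial_r)F_{\ad(A)}}^2+\abs{\nabla_{\ad(A),\partial_r}\xi}^2\Big)+\frac 1{r^2}\int_{B_r}\Big(\abs{\nabla_{\ad(A)}\xi}^2+2\abs{\rho(\xi)\Phi}^2\Big)
\]
minus
\[
\frac 1r\int_{\partial B_r}\Big(\abs{F_{\ad(A)}}^2+\abs{\nabla_{\ad(A)}\xi}^2-\abs{\mu(\Phi)}^2\Big)+\frac 1{r^2}\int_{B_r}\Big(\abs{F_{\ad(A)}}^2-\abs{\mu(\Phi)}^2\Big).
\]
Plugging this into the formula for $N^\prime(r)$ and completing the square on the leading boundary term via
\[
\frac 2{rm(r)}\int_{\partial B_r}\abs{\nabla_{A,\partial_r}\Phi}^2-\frac{2N(r)^2}r=\frac 2{rm(r)}\int_{\partial B_r}\abs{\nabla_{A,\partial_r}\Phi-\tfrac1rN(r)\Phi}^2
\]
(which uses precisely the two boundary identities above), one arrives at an exact expression for $N^\prime(r)$ as a sum of manifestly nonnegative boundary and interior integrals minus the two terms that appear in the statement. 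Transposing those two terms to the left-hand side and discarding the nonnegative ones yields the claimed inequality; here $\abs{\nabla_A\xi}=\abs{\nabla_{\ad(A)}\xi}$.

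I do not expect a serious obstacle beyond careful bookkeeping of the $r$-powers. The one point that genuinely matters is that the completing-the-square step rests on $\int_{\partial B_r}\inp{\nabla_{A,\partial_r}\Phi}{\Phi}=rD(r)$, hence on the vanishing of the auxiliary curvature operator $\fR$ built into \autoref{hyp main 3d}; likewise one must be sure to extract the full identity, not merely the inequality, from the proof of \autoref{Dprime3d}.
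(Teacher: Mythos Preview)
Your proposal is correct and follows exactly the route the paper indicates: it merely states that a computation analogous to the proof of \autoref{prop N prime } yields the inequality, and your write-up supplies precisely that computation, extracting the exact identity for $D'(r)$ from the proof of \autoref{Dprime3d} and completing the square in the same manner as in four dimensions.
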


\begin{remark}
\label{remark almostmonotonicity3d}
From the above proposition, it is evident that $N(r)$ may not exhibit monotonicity. However, if $(A, \Phi)$ is a solution to the generalized Seiberg--Witten equations \autoref{eq GSW 3d formal}, then $ F_{\ad(A)}=\mu(\Phi)$. In this case, by using \autoref{prop N prime  3d}, we obtain the inequality
\begin{equation}\label{eqnalmostmonotonesw3d}N^\prime(r) \geq 0.\end{equation}
 Otherwise, we obtain instead the inequality
\begin{equation}\label{eqnalmostmonotoneEL3d}N^\prime(r)+\frac 1{rm(r)}\int_{\partial B_r}\abs{F_{\ad(A)}}^2+\abs{\nabla_A\xi}^2+\frac 1{r^2m(r)}\int_{B_r}\abs{F_{\ad(A)}}^2\geq 0.\end{equation}
For the later case, we accordingly modify the frequency function $N(r)$ in the following proof, ensuring that it exhibits the necessary monotonicity, provided we are given the assumptions in  \autoref{growththeorem3}~\autoref{growththeorem3_part1}. 
\end{remark}

\begin{proof}[{{\normalfont{\textbf{Proof of \autoref{growththeorem3}~\autoref{growththeorem3_part1}}}}}]
Assume $\Phi\neq 0$. By \autoref{cor m prime 3d}, $r_{-1}=0$. Evidently, $N(r)=0 \ \forall r>0$ if and only if $\nabla_A\Phi=0$, $\rho(\xi)\Phi=0$ and $\mu(\Phi)=0$, or equivalently, by \autoref{Bochner 3}, $\abs{\Phi}$ is constant. Assume now on that $N\neq0$. 
	 
First consider the case, when $\xi=0$ and $(A, \Phi)$ solves  the equation \autoref{eq GSW 3d formal}. Then the inequality \autoref{eqnalmostmonotonesw3d} in \autoref{remark almostmonotonicity3d}  implies that $N^\prime>0$. Since $N\neq 0$, there exists $s>0$ such that $N(s)>0$. Set $\epsilon:=2N(s)$. Therefore for all $t\in[s,r]$ we have
	$$\frac {2N(s)}t\leq\frac{d}{dt}\log(m(t))=\frac {2N(t)}t \leq\frac {2 N(r)}t.$$
This implies 
		$$\Big(\frac rs\Big)^{2N(s)}m(s)\leq m(r)\leq \Big(\frac rs\Big)^{2N(r)}m(s).$$
 Hence \[\liminf_{r\to\infty}\frac 1{r^{\varepsilon}} m(r)\gtrsim \frac{m(s)}{s^\varepsilon}>0.\]
 Observe that, the assumptions mentioned in \autoref{growththeorem3_part1} of the theorem are not required for this case.
 
Now consider the general case under the assumptions of  \autoref{growththeorem3}~\autoref{growththeorem3_part1}. From the inequality \autoref{eqnalmostmonotoneEL3d} in \autoref{remark almostmonotonicity3d}, we obtain that for every $c>0$ there exists $\rho>0$ such that 
\begin{equation*}N^\prime(r)+\frac {c}{r^{2}m(\rho)}\geq0, \forall r\geq\rho.\end{equation*}
Note that the frequency function may not be monotone in this case. Define the modified frequency function 
\[\widetilde N_c(r):=N(r)-\frac c{m(\rho)r}, \quad \forall r\geq\rho.\] 
It follows that ${\widetilde N_c(r)}^\prime\geq 0, \forall r\geq\rho$, which gives the desired almost monotonicity property. This $\widetilde N_c$ controls $m$ as in the proof of \autoref{growththeorem4}~\autoref{growththeorem4_part1}. In particular, we obtain that there exists $c>0$ such that $\widetilde N_c(s)>0$ for some $s\geq\rho$. Denoting $\varepsilon:=2\widetilde N_c(s)>0$ we have 
	$$m(r)\geq r^\varepsilon \frac{m(s)}{s^\varepsilon}\Big(e^{\int_s^r\frac {c}{m(\rho)t^{2}} dt}\Big).$$
	Thus \[\liminf_{r\to\infty}\frac 1{r^{\varepsilon}} m(r)\gtrsim \frac{m(s)}{s^\varepsilon}>0.\]
	This completes the proof.
\end{proof}

\subsection{Consequence of finite energy and the proof of \autoref{growththeorem3} (\ref{growththeorem3_part2})}
 In this section, we also assume \autoref{hyp main 3d}. Let $(A, \xi, \Phi)$ be a solution to the generalized Seiberg--Witten Bogomolny equations \autoref{eq GSWb 3d formal}, or more generally, to the Euler--Lagrange equations \autoref{EL eq} associated with the Yang--Mills--Higgs energy functional $\sE_3$. We will show that if $\sE_3(A,\xi,\Phi)$ is finite, then $\xi$ and $\abs{\Phi}$ must converge to non-negative constants $m_1$  and $m_2$ respectively at infinity. The key idea is to once again apply Heinz trick ($\varepsilon$-regularity) from \autoref{Heinz} to the energy density $e(A, \xi, \Phi)$, which serves as the integrand in the Yang--Mills--Higgs energy functional$\sE_3$. The proof follows a line of reasoning similar to that in \autoref{finite_energy4}, which itself  draws on several related arguments from  \cites{Nagy2019,Fadel2022}.
\begin{definition}
The \textbf{ energy density function} $e: \sA(\fs,B) \times \Omega^0(\mathbb R^3,\ad(\fs)) \times \Gamma(\mathbf S)\to  C^\infty(\mathbb R^3,\mathbb R)$ is defined by
\[e(A,\xi,\Phi)=
|F_{\ad(A)}|^2+|\nabla_{\ad(A)}\xi|^2+|\nabla_A\Phi|^2+|\mu(\Phi)|^2+|\rho(\xi)\Phi|^2.\qedhere\]
\end{definition}

\begin{lemma}\label{estimate}Suppose $(A, \xi, \Phi)$ is a solution to the generalized Seiberg--Witten Bogomolny equations \autoref{eq GSWb 3d formal}, or more generally, to the Euler--Lagrange equations \autoref{EL eq}. Then
	$$\Delta e(A,\xi,\Phi) \lesssim e(A,\xi,\Phi)+e(A,\xi,\Phi)^{\frac 32}.$$
\end{lemma}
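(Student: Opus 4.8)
The plan is to mimic exactly the three-part computation carried out in the four-dimensional case (proof of \autoref{estimate1}), now with the extra Higgs-field term $\abs{\nabla_{\ad(A)}\xi}^2$ included in $e(A,\xi,\Phi)$. So I would estimate each of the five summands $\abs{F_{\ad(A)}}^2$, $\abs{\nabla_{\ad(A)}\xi}^2$, $\abs{\nabla_A\Phi}^2$, $\abs{\mu(\Phi)}^2$, $\abs{\rho(\xi)\Phi}^2$ separately, show that the Laplacian of each is $\lesssim e + e^{3/2}$, and then sum. Throughout I would use: (i) the Bochner--Weitzenböck formula \autoref{prop bochner weitzen 3} for the spinor and the analogous Weitzenböck identity for $\ad(\fs)$-valued $p$-forms; (ii) the Euler--Lagrange equations \autoref{EL eq}; (iii) the algebraic identities in \autoref{prop identity  3} and \autoref{prop identity with d, d* dim 3}; and (iv) Kato's inequality $\frac12\Delta\abs{s}^2 \le \inp{\nabla^*\nabla s}{s}$ pointwise (together with the sign-favourable terms being discarded). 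The only genuinely new computations relative to the $4$d case concern the two Higgs-involving pieces, which I sketch below.

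First, the three pieces that already appeared in dimension four. For $\abs{F_{\ad(A)}}^2$: apply the Weitzenböck formula to the $2$-form $F_{\ad(A)}$, use $\Delta_{\ad(A)}F_{\ad(A)} = d_{\ad(A)}d^*_{\ad(A)}F_{\ad(A)}$ (Bianchi), and substitute the first equation of \autoref{EL eq}, namely $d^*_{\ad(A)}F_{\ad(A)} = [d_{\ad(A)}\xi,\xi] - \rho^*((\nabla_A\Phi)\Phi^*)$; differentiating and pairing against $F_{\ad(A)}$ produces terms bounded by $\abs{F_{\ad(A)}}^2$, $\abs{\nabla_A\Phi}^2\abs{F_{\ad(A)}}$, $\abs{\nabla_{\ad(A)}\xi}^2\abs{F_{\ad(A)}}$, $\abs{F_{\ad(A)}}^3$ (plus a sign-favourable $-\abs{\rho(F_{\ad(A)})\Phi}^2$ that we drop), all of which are $\lesssim e + e^{3/2}$. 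For $\abs{\nabla_A\Phi}^2$: commute $\nabla_A^*\nabla_A$ past $\nabla_A$ as in \autoref{estimate1}, bringing in a curvature commutator term $\lesssim \abs{F_{\ad(A)}}\abs{\nabla_A\Phi}^2$, and use the third equation of \autoref{EL eq}, $\nabla_A^*\nabla_A\Phi = \rho(\xi)^2\Phi - \tilde\gamma(\mu(\Phi))\Phi - \fR\Phi$ (with $\fR=0$ by \autoref{hyp main 3d}); differentiating $\rho(\xi)^2\Phi$ contributes terms $\lesssim \abs{\nabla_{\ad(A)}\xi}\abs{\rho(\xi)\Phi}\abs{\nabla_A\Phi} + \abs{\rho(\xi)\Phi}^2\,(\dots)$ controlled by $e^{3/2}$, and the $\tilde\gamma(\mu(\Phi))\Phi$ term is handled as in the $4$d proof. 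For $\abs{\mu(\Phi)}^2$: use $\nabla_{\ad(A)}^*\nabla_{\ad(A)}\mu(\Phi)$ expressed via $\mu(\nabla_A^*\nabla_A\Phi,\Phi)$ and $\langle\mu(\nabla_A\Phi,\nabla_A\Phi)\rangle$ exactly as in \autoref{estimate1}, substitute the third EL equation again, and bound everything by $\abs{\mu(\Phi)}^3 + \abs{\nabla_A\Phi}^2\abs{\mu(\Phi)} + \abs{\rho(\xi)\Phi}^2\abs{\mu(\Phi)} \lesssim e^{3/2}$.

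The two new pieces: for $\abs{\nabla_{\ad(A)}\xi}^2$, apply Kato and commute, obtaining $\frac12\Delta\abs{\nabla_{\ad(A)}\xi}^2 \le \inp{\nabla_{\ad(A)}^*\nabla_{\ad(A)}\nabla_{\ad(A)}\xi}{\nabla_{\ad(A)}\xi}$, then move the outer $\nabla_{\ad(A)}$ through $\nabla_{\ad(A)}^*\nabla_{\ad(A)}$ picking up a curvature term $\lesssim \abs{F_{\ad(A)}}\abs{\nabla_{\ad(A)}\xi}^2$, and substitute the second EL equation $\Delta_{\ad(A)}\xi = -\rho^*((\rho(\xi)\Phi)\Phi^*)$; differentiating the right side gives $\lesssim \abs{\nabla_{\ad(A)}\xi}\,(\abs{\nabla_A\Phi}\abs{\rho(\xi)\Phi} + \abs{\rho(\xi)\Phi}\abs{\nabla_A\Phi})$-type products and a $-\abs{\rho(\nabla_{\ad(A)}\xi)\Phi}^2\le 0$ piece we discard — all bounded by $e^{3/2}$. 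For $\abs{\rho(\xi)\Phi}^2$, expand $\frac12\Delta\abs{\rho(\xi)\Phi}^2$ directly via the product rule: $\Delta(\rho(\xi)\Phi) = \rho(\Delta_{\ad(A)}\xi)\Phi - 2\rho(\nabla_{\ad(A)}\xi)\cdot\nabla_A\Phi + \rho(\xi)\nabla_A^*\nabla_A\Phi$ (up to signs from the Weitzenböck convention), then substitute the second and third EL equations; pairing with $\rho(\xi)\Phi$ yields terms $\lesssim \abs{\rho(\xi)\Phi}^2\abs{\Phi}^2(\dots)$, $\abs{\nabla_{\ad(A)}\xi}\abs{\nabla_A\Phi}\abs{\rho(\xi)\Phi}$, $\abs{\rho(\xi)\Phi}^2\abs{\mu(\Phi)}$, $\abs{\rho(\xi)\Phi}^4$, again all $\lesssim e + e^{3/2}$. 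Summing the five inequalities gives $\Delta e(A,\xi,\Phi) \lesssim e(A,\xi,\Phi) + e(A,\xi,\Phi)^{3/2}$.

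The main obstacle is purely bookkeeping: one must be careful that every cubic (or, in the $\abs{\rho(\xi)\Phi}^2$ term, quartic) expression that appears genuinely splits into factors each of which is a piece of $e$, so that by the pointwise Young-type inequality $abc \lesssim (a^2+b^2+c^2)^{3/2}$ it is dominated by $e^{3/2}$, and that the quadratic leftovers are dominated by $e$; there is no analytic difficulty and no need for curvature decay here, since this is a purely local pointwise differential inequality, exactly parallel to \autoref{estimate1}. (Note also that for GSW Bogomolny monopoles one may alternatively use the equations \autoref{eq GSWb 3d formal} directly in place of \autoref{EL eq}, as is done in the $4$d argument, but routing everything through the Euler--Lagrange equations covers both cases at once.)
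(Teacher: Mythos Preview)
Your approach is essentially identical to the paper's: estimate each of the five summands of $e$ separately via Kato's inequality, the Weitzenböck formula, and the Euler--Lagrange equations \autoref{EL eq}, then sum.

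One genuine gap in your sketch concerns the $\abs{\rho(\xi)\Phi}^2$ piece. The quartic expressions you list --- schematically $\abs{\rho(\xi)\Phi}^2\abs{\Phi}^2$ and $\abs{\rho(\xi)\Phi}^4$ --- are \emph{not} bounded by $e+e^{3/2}$, since neither $\abs{\Phi}$ nor $\abs{\xi}$ is controlled by $e$. What actually happens (and what the paper does) is that after substituting the second and third equations of \autoref{EL eq} and pairing with $\rho(\xi)\Phi$, these quartic contributions come out as $-\abs{\rho^*((\rho(\xi)\Phi)\Phi^*)}^2$ and $-\abs{\rho(\xi)^2\Phi}^2$ respectively, using $\rho(\xi)^*=-\rho(\xi)$; they are negative perfect squares and are dropped, not bounded. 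Only the honestly cubic cross-terms $\abs{\nabla_{\ad(A)}\xi}\,\abs{\nabla_A\Phi}\,\abs{\rho(\xi)\Phi}$ and $\abs{\rho(\xi)\Phi}^2\abs{\mu(\Phi)}$ survive, and those are $\lesssim e^{3/2}$. A smaller omission: in the $\abs{\nabla_{\ad(A)}\xi}^2$ estimate, commuting $\nabla^*\nabla$ past $\nabla$ (equivalently, passing to the Hodge Laplacian on $1$-forms) also produces a $d^*_{\ad(A)}F_{\ad(A)}$-term, into which the paper substitutes the \emph{first} EL equation as well as the second.
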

\begin{proof}The proof is similar to the proof of \autoref{estimate1}.
We are going to use Lichenerowicz--Weitzenb\"ock formula for Lie-algebra bundle valued $1$ and $2$-forms, Euler--Lagrange equations \autoref{EL eq} and \autoref{prop identity with d, d* dim 3}.

	\begin{align*}
   \frac 12\Delta \abs{F_{\ad(A)}}^2
   &\leq \inp{\nabla_{\ad(A)}^*\nabla_{\ad(A)} F_{\ad(A)}}{F_{\ad(A)}}\\
	&\lesssim \inp{\Delta_{\ad(A)} F_{\ad(A)}}{F_{\ad(A)}}+\abs{F_{\ad(A)}}^2+\abs{F_{\ad(A)}}^3\\
	&=\inp{d_{\ad(A)}[d_{\ad(A)}\xi,\xi]-d_{\ad(A)}\rho^*((\nabla_A\Phi)\Phi^*)}{F_{\ad(A)}}+\abs{F_{\ad(A)}}^2+\abs{F_{\ad(A)}}^3\\
	&=\inp{[[F_{\ad(A)},\xi],\xi]-[d_{\ad(A)}\xi\w d_{\ad(A)}\xi]-\rho^*((\rho(F_{\ad(A)})\Phi)\Phi^*)}{F_{\ad(A)}}\\
	&-\inp{\rho^*(\nabla_A\Phi \w(\nabla_A\Phi)^*)}{F_{\ad(A)}}+\abs{F_{\ad(A)}}^2+\abs{F_{\ad(A)}}^3 \\
	&\lesssim -\abs{[F_{\ad(A)},\xi]}^2+\abs{d_A\xi}^2\abs{F_{\ad(A)}}-\abs{\rho(F_{\ad(A)})\Phi}^2+\abs{\nabla_A\Phi}^2\abs{F_{\ad(A)}}\\&+\abs{F_{\ad(A)}}^2+\abs{F_{\ad(A)}}^3 \lesssim e(A,\xi,\Phi)+e(A,\xi,\Phi)^{\frac 32},
	\end{align*}
	\begin{align*}
\frac 12\Delta|\nabla_{\ad(A)}\xi|^2
	&\leq \inp{\nabla_{\ad(A)}^*\nabla_{\ad(A)} \nabla_{\ad(A)}\xi}{\nabla_{\ad(A)}\xi}\\
	&\lesssim\inp{\Delta_{\ad(A)} d_{\ad(A)}\xi}{\nabla_{\ad(A)}\xi}+\abs{\nabla_{\ad(A)}\xi}^2+\abs{F_{\ad(A)}}\abs{\nabla_{\ad(A)}\xi}^2\\
	&= \inp{d_{\ad(A)}^* [F_{\ad(A)},\xi]-d_{\ad(A)}\rho^*((\rho(\xi)\Phi)\Phi^*)}{\nabla_{\ad(A)}\xi}+(1+\abs{F_{\ad(A)}})\abs{\nabla_{\ad(A)}\xi}^2\\
	&\lesssim \inp{[[d_{\ad(A)}\xi,\xi]-\rho^*((\nabla_A\Phi)\Phi^*),\xi]}{\nabla_{\ad(A)}\xi}-|\rho(\nabla_{\ad(A)}\xi)\Phi|^2\\	
	&+|\rho(\xi)\Phi|\abs{\nabla_{\ad(A)}\xi}\abs{\nabla_A\Phi}+(1+\abs{F_{\ad(A)}})\abs{\nabla_{\ad(A)}\xi}^2\\
	&\lesssim e(A,\xi,\Phi)+e(A,\xi,\Phi)^{\frac 32},
	\end{align*}
		\begin{align*}
	\frac 12\Delta|\nabla_A\Phi|^2
	&\leq \inp{\nabla_A^*\nabla_A\nabla_A\Phi}{\nabla_A\Phi}\\
	&=\inp{[\nabla_A^*\nabla_A,\nabla_A]\Phi}{\nabla_A\Phi}+\inp{\nabla_A\nabla_A^*\nabla_A\Phi}{\nabla_A\Phi} \\
	&\lesssim \inp{\rho(d_{\ad(A)}^*F_{\ad(A)})\Phi}{\nabla_A\Phi}+\abs{F_{\ad(A)}}\abs{\nabla_A\Phi}^2+\inp{\nabla_A(\rho(\xi)^2\Phi-\tilde \gamma(\mu(\Phi))\Phi)}{\nabla_A\Phi} \\
	&\lesssim \abs{\nabla_{\ad(A)}\xi}\abs{\rho(\xi)\Phi}\abs{\nabla_A\Phi}-\abs{\rho^*(\nabla_A\Phi)\Phi^*}^2+\abs{F_{\ad(A)}}\abs{\nabla_A\Phi}^2\\
	& -\abs{\rho(\xi)\nabla_A\Phi}^2 - 2\abs{\mu(\nabla_A\Phi,\Phi)}^2+\abs{\mu(\Phi)}\abs{\nabla_A\Phi}^2\lesssim e(A,\xi,\Phi)+e(A,\xi,\Phi)^{\frac 32},
		\end{align*}
\begin{align*}
\frac 12\Delta|\mu(\Phi)|^2
&\leq \inp{\nabla_{\ad(A)}^*\nabla_{\ad(A)}\mu(\Phi)}{\mu(\Phi)}\\
&=2\inp{\mu(\nabla_A^*\nabla_A\Phi,\Phi)-\mu(\nabla_A\Phi,\nabla_A\Phi)}{\mu(\Phi)}\\
&=2\inp{\mu(\rho(\xi)^2\Phi-\tilde \gamma(\mu(\Phi))\Phi,\Phi)-\mu(\nabla_A\Phi,\nabla_A\Phi)}{\mu(\Phi)}\\
&\lesssim\abs{\rho(\xi)\Phi}^2\abs{\mu(\Phi)}+\abs{\mu(\Phi)}^3+\abs{\nabla_A\Phi}^2\abs{\mu(\Phi)}\lesssim e(A,\xi,\Phi)^{\frac 32},
\end{align*}
and
\begin{align*}
\frac 12\Delta|\rho(\xi)\Phi|^2
&\leq  \inp{\nabla_A^*\nabla_A(\rho(\xi)\Phi)}{\rho(\xi)\Phi}\\
&\lesssim \inp{\rho(\Delta_{\ad(A)}\xi)\Phi}{\rho(\xi)\Phi}+\abs{\nabla_A\xi}\abs{\nabla_A\Phi}\abs{\rho(\xi)\Phi}+\inp{\rho(\xi)(\nabla_A^*\nabla_A\Phi)}{\rho(\xi)\Phi}\\
&\lesssim -\inp{\rho^*((\rho(\xi)\Phi)\Phi^*)\Phi}{\rho^*(\rho(\xi)\Phi)}+\abs{\nabla_A\xi}\abs{\nabla_A\Phi}\abs{\rho(\xi)\Phi}\\
&+\inp{\rho(\xi)(\rho(\xi)^2\Phi-\tilde \gamma(\mu(\Phi))\Phi)}{\rho(\xi)\Phi}\\
&\lesssim -\abs{\rho^*(\rho(\xi)\Phi)\Phi^*}^2+\abs{\nabla_A\xi}\abs{\nabla_A\Phi}\abs{\rho(\xi)\Phi}+\abs{\rho(\xi)\Phi}^2\abs{\mu(\Phi)}-\abs{\rho(\xi)^2\Phi}^2\\
&\lesssim e(A,\xi,\Phi)^{\frac 32}.\qedhere
\end{align*}
\end{proof}
\begin{proof}[{{\normalfont{\textbf{Proof of \autoref{growththeorem3}~\autoref{growththeorem3_part2}}}}}] The Yang--Mills--Higgs energy $\sE_3(A,\xi,\Phi)$ is finite implies that the energy density $e(A,\xi,\Phi)$ is in $L^1( \mathbb R^3)$.
This together with the estimate in \autoref{estimate}, satisfied by $e(A,\xi,\Phi)$, allows us to apply Heinz trick from \autoref{Heinz} by taking $f=e(A,\xi,\Phi)$. 

We aim to show that for any $\alpha\in (0,1)$, 
\begin{equation}\label{eq r alpha}\abs{\xi}=O(r^{\alpha}),\quad \abs{\Phi}=O(r^{\alpha})\ \ \text{as}\ \  r=\abs{x}\to \infty.\end{equation}
By \autoref{Bochner 3}, $\abs{\xi}^2$ and $\abs{\Phi}^2$ are subharmonic. The proof of the above claim for both $\xi$ and $\Phi$ are exactly as in the proof of \autoref{growththeorem3}~\autoref{growththeorem3_part2}, with the only difference now is the inequality governing the Sobolev embedding $W^{1, \frac 3{1-\alpha}} (\mathbb R^3)\hookrightarrow C^{0,\alpha}(\mathbb R^3)$. 

Let $G$ be the Green's kernel on $\mathbb R^3$. Set,
   $$\psi_1(x):=-\int_{\mathbb R^3}G(x,\cdot )\Delta \abs{\xi}^2=2\int_{\mathbb R^3}G(x,\cdot) \big(\abs{\rho(\xi)\Phi}^2+\abs{\nabla_{\ad(A)}\xi}^2\big), \quad x\in \mathbb R^3,$$  
  and 
  $$\psi_2(x):=-\int_{\mathbb R^3}G(x,\cdot )\Delta \abs{\Phi}^2=2\int_{\mathbb R^3}G(x,\cdot) \big(\abs{\rho(\xi)\Phi}^ 2+2\abs{	\mu(\Phi)}^2+\abs{\nabla_A\Phi}^2\big), \quad x\in \mathbb R^3.$$
Again by \autoref{Heinz}~\autoref{Heinz_part2} and \cite[Lemma 2.10]{Fadel2022}, we obtain that $\psi_i(x)$ exists and $\psi_i:\mathbb R^3\to [0,\infty)$ is a smooth function satisfying
\[\psi_i=o(1)\ \ \text{as}\ \  r=\abs{x}\to \infty, \quad i=1,2.\]

Since $\abs{\xi}^2+\psi_1$ and $\abs{\Phi}^2+\psi_2$ are harmonic, exactly same reason as in the proof of \autoref{growththeorem3}~\autoref{growththeorem3_part2} implies that $\abs{\xi}^2+\psi_1$ and $\abs{\Phi}^2+\psi_2$ are constants, say $m_1$ and $m_2$, respectively. This finishes the proof.
\end{proof}

\appendix
\section{Heinz trick and $\varepsilon$-regularity}
\begin{lemma}\label{Heinz}
	Let $f:\mathbb R^n\to[0,\infty), n\leq 4$ be a smooth function satisfying $$\Delta f \lesssim f+ f^{3/2}.$$
	Then 
\begin{enumerate}	
	\item \label{Heinz_part1}
	there exist constants $\varepsilon_0, C_0>0$ such that for any $0<r<1$ and any point $x\in \mathbb R^n$ for which
	\[r^{4-n}\int_{B_r(x)}f < \varepsilon_0,\]
	we have the estimate \begin{equation}\label{eqHeinzpointwise} \sup_{y\in B_{ r/4}(x) } f(y)\leq  \frac {C_0}{r^n}{\int_{B_r(x)}f}.\end{equation}
	\item \label{Heinz_part2}
	if $f\in L^1( \mathbb R^n)$, then
	$f= o(1), \ \text{as } r \to \infty.$
	Moreover, there is a constant $C_f>0$ depending on $f$, such that for any $1\leq p\leq \infty$, 
	\[ \norm{f}_{L^p(\mathbb R^n)}\leq C_f \norm{f}_{L^1(\mathbb R^n)}.\]
		\end{enumerate}
\end{lemma}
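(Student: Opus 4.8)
I would prove \autoref{Heinz_part1} by the classical Heinz point-picking argument and then deduce \autoref{Heinz_part2} by soft means. Write $c_0$ for the implicit constant in $\Delta f\le c_0(f+f^{3/2})$; recall that with the sign convention here ($\Delta=\nabla^*\nabla$, so $\Delta\abs{x}^2=-2n$ on $\mathbb R^n$) a function $u$ with $\Delta u\le K$ becomes subharmonic after adding $\tfrac{K}{2n}\abs{x}^2$, hence obeys the sub-mean-value inequality. First I reduce to the unit scale: for $0<r<1$ set $\bar f(y):=r^4f(ry)$ on $B_1(0)$; then $\Delta\bar f\le c_0(r^2\bar f+\bar f^{3/2})\le c_0(\bar f+\bar f^{3/2})$ because $r<1$, while $\int_{B_1(0)}\bar f=r^{4-n}\int_{B_r(0)}f$ and $\sup_{B_{1/4}(0)}\bar f=r^4\sup_{B_{r/4}(0)}f$. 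So it suffices to produce $\varepsilon_0,C_0>0$ depending only on $n$ and $c_0$ such that $\int_{B_1(0)}f<\varepsilon_0$ implies $\sup_{B_{1/4}(0)}f\le C_0\int_{B_1(0)}f$; undoing the scaling then yields \eqref{eqHeinzpointwise}.

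For the unit-scale statement, consider the Heinz quantity $g(\sigma):=(1-\sigma)^4\sup_{\overline{B_\sigma(0)}}f$ on $[0,1]$ --- the exponent $4$ being the scaling weight of the inequality $\Delta f\lesssim f^{3/2}$, and this choice is what makes the argument close up. Since $g(1)=0$, $g$ attains its maximum $M_0$ at some $\sigma_0\in[0,1)$, realised at a point $y_0$ with $e_0:=f(y_0)=\sup_{\overline{B_{\sigma_0}}}f$; writing $\rho_0:=(1-\sigma_0)/2<\tfrac12$ one gets $M_0=16\rho_0^4e_0$ and $\sup_{B_{\rho_0}(y_0)}f\le16e_0$. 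Rescaling around $y_0$ by $\rho_0$, the function $\tilde f(z):=e_0^{-1}f(y_0+\rho_0z)$ on $B_1(0)$ satisfies $\tilde f(0)=1$, $0\le\tilde f\le16$, and --- exactly because the weight $\rho_0^4$ cancels the $\rho_0$--dependence coming from the $f^{3/2}$ term --- $\Delta\tilde f\le c_0\rho_0^2\tilde f+\tfrac14 c_0\sqrt{M_0}\,\tilde f^{3/2}\le c_1\max(1,\sqrt{M_0})$ on $B_1(0)$, with $c_1=c_1(c_0)$. The case $M_0<1$ already gives the bound on $M_0$ we want, so assume $M_0\ge1$, whence $\Delta\tilde f\le c_1\sqrt{M_0}$. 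Adding $\tfrac{c_1\sqrt{M_0}}{2n}\abs{z}^2$ makes $\tilde f$ subharmonic, and the sub-mean-value inequality over $B_\delta(0)$ with $\delta:=(\tfrac{n+2}{c_1\sqrt{M_0}})^{1/2}$ (which is $\le1$ once $M_0$ exceeds a fixed threshold) forces the average of $\tilde f$ over $B_\delta(0)$ to be at least $\tfrac12$. Transferring back, $\int_{B_1(0)}f\ge\int_{B_{\rho_0\delta}(y_0)}f=e_0\rho_0^n\!\int_{B_\delta(0)}\tilde f\gtrsim e_0\rho_0^n\delta^n$, which by the choices above is $\gtrsim M_0^{\,1-n/4}\rho_0^{\,n-4}$. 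Here $n\le4$ is decisive: the exponents $1-n/4\ge0$ and $n-4\le0$ have the right sign, so (using $M_0\ge1$ and $\rho_0<1$) the right-hand side is at least a positive constant $\kappa_n=\kappa_n(n,c_0)$. Choosing $\varepsilon_0<\kappa_n$ therefore forces $M_0\le M_*$ for a fixed $M_*=M_*(n,c_0)$ whenever $\int_{B_1(0)}f<\varepsilon_0$.

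Once $M_0\le M_*$, evaluating $g$ at $\sigma=\tfrac12$ gives $\sup_{B_{1/2}(0)}f\le16M_*$, so on $B_{1/2}(0)$ one has $f^{3/2}\le(16M_*)^{1/2}f$ and hence $\Delta f\le c_0(1+4\sqrt{M_*})f$: the nonlinearity is now subcritical and $f$ is a nonnegative subsolution of a linear uniformly elliptic equation with bounded coefficients. The local boundedness (mean value) inequality for such subsolutions --- Moser iteration --- gives $\sup_{B_{1/4}(0)}f\le C_0\int_{B_{1/2}(0)}f\le C_0\int_{B_1(0)}f$, which proves \autoref{Heinz_part1}. For \autoref{Heinz_part2}, suppose $f\in L^1(\mathbb R^n)$; for $\abs{x}$ large the ball $B_{1/2}(x)$ lies in $\{\abs{y}\ge\abs{x}-1\}$, so $\int_{B_{1/2}(x)}f$ is bounded by a tail of $\int_{\mathbb R^n}f$ and tends to $0$, and since $(1/2)^{4-n}\le1$ the hypothesis of \autoref{Heinz_part1} holds with $r=\tfrac12$; hence $f(x)\le\sup_{B_{1/8}(x)}f\le C_0 2^n\!\int_{\{\abs{y}\ge\abs{x}-1\}}f\to0$ as $\abs{x}\to\infty$, i.e. $f=o(1)$. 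In particular $f$ is bounded outside a large ball and, being continuous, bounded on the complementary compact set, so $\|f\|_{L^\infty(\mathbb R^n)}<\infty$; discarding the trivial case $f\equiv0$, from $\|f\|_{L^p}^p=\int_{\mathbb R^n}f^{p-1}f\le\|f\|_{L^\infty}^{p-1}\|f\|_{L^1}$ and $t^\theta\le1+t$ ($\theta\in[0,1]$) one gets $\|f\|_{L^p}\le(1+\|f\|_{L^\infty}/\|f\|_{L^1})\|f\|_{L^1}$ for all $p\in[1,\infty]$, so $C_f:=1+\|f\|_{L^\infty}/\|f\|_{L^1}$ works.

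The crux --- and the only place real care is needed --- is the point-picking step: choosing the Heinz weight with exponent $4$ so that the rescaled inequality reads $\Delta\tilde f\le c_1\sqrt{M_0}$ with no leftover power of $\rho_0$, and then verifying that the transferred lower bound $\int_{B_1}f\gtrsim M_0^{1-n/4}\rho_0^{n-4}$ does not degenerate; this is precisely where the restriction $n\le4$ enters, and it is used twice. Everything else is scaling bookkeeping or standard linear elliptic theory, so I do not expect further obstacles.
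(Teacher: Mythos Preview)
Your argument is correct. For \autoref{Heinz_part2} you follow essentially the same path as the paper: apply \autoref{Heinz_part1} at radius $r=\tfrac12$ to balls far from the origin to get $f=o(1)$, conclude $f\in L^\infty$, and interpolate. The only cosmetic difference is that the paper bounds $\|f\|_{L^\infty}$ by re-applying \autoref{Heinz_part1} at the maximum point $x_*$ with a small radius $r_*$, yielding $C_f=\max\{1,C_0 r_*^{-n}\}$, whereas you simply invoke continuity on a compact set plus decay and take $C_f=1+\|f\|_{L^\infty}/\|f\|_{L^1}$; both constants depend on $f$, as the statement allows.

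For \autoref{Heinz_part1} your treatment is more substantial than the paper's: the paper does not prove this part at all but cites \cite[Lemma~A.1]{Walpuski2015a} and records the dictionary of parameters. You instead supply a complete, self-contained Heinz point-picking proof --- the rescaling to unit radius, the weighted maximum $g(\sigma)=(1-\sigma)^4\sup_{\overline{B_\sigma}}f$, the second rescaling about the peak, the mean-value bound on the shifted subharmonic function, and the concluding Moser step. Your scaling bookkeeping checks out (in particular the identity $e_0^{1/2}\rho_0^2=\sqrt{M_0}/4$ that kills the $\rho_0$-dependence, and the inclusion $B_{\rho_0\delta}(y_0)\subset B_1(0)$), and your observation that $n\le 4$ enters exactly through the signs of the exponents $1-n/4$ and $n-4$ is the right diagnosis. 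So what you have written is a genuine proof of the cited lemma rather than a different approach to the present statement; it buys self-containment at the cost of length, while the paper's citation keeps the appendix short.
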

\begin{proof}The proof of \autoref{Heinz_part1} can be found in \cite[Lemma A.1]{Walpuski2015a}. For clarity, we note the correspondence of notation used therein:
\[U=B_1(x), \quad d=4, \quad p=1, \quad q\coloneqq\frac 2d+1=\frac 32, \quad \delta=0. \]

The proof of \autoref{Heinz_part2} follows from the arguments in \cite[Proposition 3.1, Corollary 3.2]{Nagy2019} and in \cite[Corollary 4.4]{Fadel2022}. For the reader’s convenience, we include the proof below.  Since $f\in L^1( \mathbb R^n)$, given the constant $\varepsilon_0>0$ from \autoref{Heinz_part1}, there exists $R>0$ such that 
\[\int_{\mathbb R^n\setminus B_R(0)}f< \varepsilon_0.\]
Since $n\leq 4$, the condition in \autoref{Heinz_part1} is satisfied for all $x\in \mathbb R^n\setminus B_{R+2}(0)$. Therefore by taking $r=\frac 12$ in the inequality \autoref{eqHeinzpointwise}, we obtain
\[ f (x) \leq 2^n \cdot C_0 {\int_{B_{1/2}(x)}f}.\]
Since $f\in L^1( \mathbb R^n)$, the integral in the right-hand side tends to zero at infinity, that is of $o(1)$ as $r=\abs{x} \to \infty$; and consequently,  so does $f(x)$. This implies that $f\in {L^\infty(\mathbb R^n)} \cap {L^1(\mathbb R^n)}$ and there exists $x_*\in \mathbb R^n$ where $f$ attains its maximum. By choosing $r_*\in (0,1)$ small enough such that $r_*^{4-n}\int_{B_{r_*}(x_*)}f < \varepsilon_0$, the inequality \autoref{eqHeinzpointwise} yields the estimate 
\[\norm{f}_{L^\infty(\mathbb R^n)}\leq C_0 r_*^{-n} \norm{f}_{L^1(\mathbb R^n)}.\] 
To derive the estimate in \autoref{Heinz_part2} for any $1<p< \infty$, we apply the H\"older inequality:
\[ \norm{f}_{L^p(\mathbb R^n)}\leq \norm{f}^{(p-1)/p}_{L^\infty(\mathbb R^n)}\norm{f}^{1/p}_{L^1(\mathbb R^n)}\leq (C_0 r_*^{-n})^{(p-1)/p}  \cdot \norm{f}_{L^1(\mathbb R^n)}.\]
By choosing $C_f:= \max\{ 1, C_0 r_*^{-n}\}$ we obtain the required estimate.
\end{proof}

\printreferences

\end{document}